\documentclass{amsart}

\usepackage{amsthm,amsmath,amsthm,amssymb,mathrsfs,graphicx,mathtools,relsize,hyperref,cleveref,enumerate,nameref}
\usepackage[utf8]{inputenc}
\usepackage[all]{xy}


\theoremstyle{theorem}
 \newtheorem{thm}{Theorem}[section]
 \newtheorem{prop}[thm]{Proposition}
 \newtheorem{lem}[thm]{Lemma}
 \newtheorem{cor}[thm]{Corollary}

 \newtheorem{que}[thm]{Question}

\theoremstyle{definition}
 \newtheorem{exm}[thm]{Example}
 \newtheorem{dfn}[thm]{Definition}
 \newtheorem{notation}[thm]{Notation}
\theoremstyle{remark}
 
 \numberwithin{equation}{section}
 
\renewcommand{\le}{\leqslant}
\renewcommand{\ge}{\geqslant}
\renewcommand{\setminus}{\smallsetminus}
\setlength{\textwidth}{28cc} \setlength{\textheight}{42cc}



\def\Aut{\text{\rm Aut}}

\def\Prim{\text{\rm Prim}}
\def\Max{\text{\rm Max}}
\def\id{\text{\rm id}}
\def\Ad{\text{\rm Ad}}

\def\hull{\text{\rm hull}}

\def\supp{\text{\rm supp}}
\def\ker{\text{\rm ker}}
\def\hull{\text{\rm hull}}

\def\tor{\text{\sf tor}}

\def\H{\mathcal{H}}

\def\B{\mathcal{B}}

\def\BH{\mathcal{B}(\mathcal{H})}

\def\supp{\text{\rm supp}}
\def\Td{T_{d}}

\newcommand{\norm}[1]{\left\lVert#1\right\rVert}

\newcommand{\hlight}[1]{\textit{\textbf{#1}}}


\title[Weighted Orlicz $*$-algebras on locally elliptic groups]{Weighted Orlicz $*$-algebras on locally elliptic groups}
\date{\today}

\subjclass[2020]{22D15, 22D20, 43A15, 43A20, 43A45, 46H05, 46H30}
\keywords{Banach $*$-algebra, Orlicz space, Wiener algebra, Hermitian algebra, $*$-regular algebra, locally elliptic group, functional calculus on weighted group algebras}
\thanks{The author acknowledges support from the FWO and F.R.S.-FNRS under the Excellence of Science (EOS) program (project ID 40007542).}

\author[Max Carter]{Max Carter} 
\address{Institut de recherche en mathématique et physique \\
Chemin du Cyclotron 2 \\
boîte L7.01.02 \\
Université catholique de Louvain \\ 
1348 Louvain-la-Neuve \\
Belgique.}
\email{max.carter@uclouvain.be}


\begin{document}

\begin{abstract}
Let $G$ be a locally elliptic group, $(\Phi,\Psi)$ a complementary pair of Young functions, and $\omega: G \rightarrow [1,\infty)$ a weight function on $G$ such that the weighted Orlicz space $L^\Phi(G,\omega)$ is a Banach $*$-algebra when equipped with the convolution product and involution $f^*(x):=\overline{f(x^{-1})}$ ($f \in L^\Phi(G,\omega)$).  Such a weight always exists on $G$ and we call it an \textit{$L^\Phi$-weight}. We assume that $1/\omega \in L^\Psi(G)$ so that $L^\Phi(G,\omega) \subseteq L^1(G)$. This paper studies the spectral theory and primitive ideal structure of $L^\Phi(G,\omega)$.  In particular, we focus on studying the Hermitian, Wiener and $*$-regularity properties on this algebra, along with some related questions on spectral synthesis. It is shown that $L^\Phi(G,\omega)$ is always quasi-Hermitian, weakly-Wiener and $*$-regular. Thus, if $L^\Phi(G,\omega)$ is Hermitian, then it is also Wiener. Although, in general, $L^\Phi(G,\omega)$ is not always Hermitian, it is known that Hermitianness of $L^1(G)$ implies Hermitianness of $L^\Phi(G,\omega)$ if $\omega$ is sub-additive. We give numerous examples of locally elliptic groups $G$ for which $L^1(G)$ is Hermitian and sub-additive $L^\Phi$-weights on these groups. In the weighted $L^1$ case, even stronger Hermitianness results are formulated.


\end{abstract}

\maketitle


\section{Introduction}

It is a classical question in harmonic analysis and Banach algebra theory to determine for which locally compact groups $G$ is the Banach $*$-algebra $L^1(G)$ Hermitan/symmetric, Wiener and/or $*$-regular (\textit{c.f.}\ \cite{Lep84,Pal01}). Such a group $G$ is called Hermitian, Wiener or Boidol if, respectively, $L^1(G)$ is Hermitian, Wiener or $*$-regular. 

Classically, these problems were studied primarily in the context of connected locally compact groups and discrete groups. Every connected locally compact group can be approximated by connected Lie groups, and as a consequence, it can be shown that a connected locally compact group is Hermitian (resp.\ Wiener) if and only if its approximating Lie groups are all Hermitian (resp.\ Wiener) \cite[Section 4]{Lep76}. 

In the case of connected Lie groups, it is well known that every connected nilpotent Lie group is Hermitian, Wiener and Boidol, and for connected solvable Lie groups of real dimension $\le 4$, there is one exceptional group, the ``Poguntke group'' \cite[$\S$12.6.27]{Pal01}, which does not satisfy these properties. On the other hand, non-compact connected semisimple Lie groups are never Hermitian, Wiener or Boidol \cite[Chapter 12]{Pal01}.  

For discrete groups, it is a standard fact that these groups are ``weakly-Wiener'', and as a consequence, Hermitianness implies the Wiener property for discrete groups. It is shown in \cite[Section 4]{Lep76} that a solvable finitely-generated discrete group is Hermitian if and only if it has polynomial growth.

More general results of this form are also known: for example, nilpotent groups are always Hermitian and Wiener \cite{Lud79}, Boidol groups are always amenable \cite{BLSV78}, and it was recently proved that quasi-Hermitian groups, and hence Hermitian groups, are amenable too \cite{SW20}.

The Hermitian, Wiener and $*$-regularity properties can be formulated more generally for any Banach $*$-algebra (see \cite{Pal01}). A more general class of Banach $*$-algebras where these properties have been frequently studied are for weighted $L^1$-algebras, and more recently, weighted $L^p$-algebras, on locally compact groups \cite{Pyt73,Pyt82,FGLLMB03,DLMB04,Kuz06,Kuz08,Kuz09,KMB12}. In particular, if $G$ is a locally compact group, by a weight on $G$, we mean a measurable function $\omega: G \rightarrow [1,\infty)$ that is bounded on compact sets, sub-multiplicative and symmetric (see Definition \ref{dfn:weight} for more details). When $G$ is unimodular, these conditions on $\omega$ guarantee that the weighted space $L^1(G,\omega)$ is a Banach $*$-algebra with the convolution product and the involution coming from $L^1(G)$. For $p \in (1,\infty)$ and $q := \frac{p}{p-1}$, if $\omega$ further satisfies the property that $\omega^{-q}*\omega^{-q} \le \omega^{-q}$, then the space $L^p(G,\omega)$ is a Banach $*$-algebra under convolution and the same involution as in the $L^1$ case. Such a weight is known to exist for any $\sigma$-compact group and $p \in (1,\infty)$ \cite[Theorem 1.1]{Kuz08}. 

Weighted algebras of these forms have been studied for many years, and in the case of abelian groups, they are well understood and have many connections with classical Fourier analysis and Banach algebra theory. See for example \cite{Dom56} and the references there in.

A celebrated result in the direction of weighted $L^p$-algebras is that if $G$ is a compactly generated group of polynomial growth, and $\omega$ a weight on $G$ satisfying some technical growth conditions, then $L^1(G,\omega)$ is Hermitian, Wiener and $*$-regular \cite{FGLLMB03,DLMB04}. Similar results have also been obtained for weighted $L^p$-algebras ($p \in (1,\infty)$) on compactly generated groups of polynomial growth \cite{KMB12}, however, some difficulties in determining which of these algebras are Hermitian prevent the results from being as general as in the $L^1$ case. This line of work relies critically on the proof by Losert that every compactly generated group of polynomial growth is Hermitian, which is a consequence of his structure theory for such groups \cite{Los01}. The work is also motivated by and refers to some older results of Hulanicki, Pytlik and Dixmier on the spectral theory and functional calculus of these algebras \cite{Dix60,Hul66,Hul72,Pyt73,Pyt82}.

Even more recently, these results about weighted $L^p$-algebras on compactly generated groups of polynomial growth were generalised and extended in the setting of (twisted) weighted Orlicz $*$-algebras \cite{OO15,OS17}. An Orlicz space is a generalisation of an $L^p$-space which was defined in the 1930's by Orlicz \cite{Orl32}. Both $L^p$-spaces and variable Lebesgue spaces provide standard examples of Orlicz spaces, and certain Sobolev spaces can be found as subspaces of Orlicz spaces. Analysts have been interested in various classes of Orlicz spaces over the years, due to, for example, their applications in partial differential equations, calculus of variations and physics \cite{CF13,ML14,HH19}. Thus, Orlicz spaces form a natural generalisation of $L^p$-spaces to study.

To be more explicit, one constructs Orlicz spaces as follows: given a measure space $(X,\mu)$ and a Young function $\Phi: \mathbb{R} \rightarrow [0,\infty]$, one can associate a certain Banach space $L^\Phi(X)$ of measurable complex valued functions on $X$, and it is this space that we call an \textit{Orlicz space} (see Section \ref{sec:Orc} for the complete definition or \cite{RR91} for further theoretical details). The case when $\Phi(x) = \lvert x \rvert^p/p$ gives the classical $L^p$-spaces. Now, given a locally compact group $G$ and a Young function $\Phi$, one can study the space $L^\Phi(G)$. Typically $L^\Phi(G)$ is not closed under convolution, even when this is an $L^p$-space, but if $G$ is unimodular and $\omega$ a suitable weight on $G$, the weighted Orlicz space $L^\Phi(G,\omega)$ may be a Banach $*$-algebra when equipped with the convolution product and the involution $f^*(x):=\overline{f(x^{-1})}$ ($f \in L^\Phi(G,\omega)$). This is what we will refer to as a \textit{weighted Orlicz $*$-algebra} (or sometimes a \textit{weighted $L^\Phi$-algebra}) and we will call such a weight $\omega$ an \textit{$L^\Phi$-weight}.

In this paper we study the harmonic analysis of weighted Orlicz $*$-algebras on locally elliptic groups. A locally compact group is locally elliptic if and only if it can be written as a countable ascending union of compact open subgroups. In particular, any non-compact locally elliptic group cannot be compactly generated, however, these groups have polynomial growth. This work, in particular, tests the necessity of the compactly generated assumption in the work on compactly generated groups of polynomial growth described in the previous paragraphs. Furthermore, every (not necessarily compactly generated) locally compact group of polynomial growth is the extension of a locally elliptic group by a Lie group \cite[Theorem 3.3]{Los21}, so understanding weighted Orlicz $*$-algebras on general groups of polynomial growth would naturally require one to understand what happens in the locally elliptic case. 

Locally elliptic groups also feature in many places throughout the theory of totally disconnected locally compact (tdlc) groups and they thus form an interesting class of groups to study the harmonic analysis of from the perspective of tdlc group theory. For example, locally elliptic groups have strong connections with the theory of contraction groups \cite{BW04,GW10,GW21,GW21b} and the theory of scale groups \cite{BW04,Hor15,Wil20}. These two classes of groups are actively studied in tdlc group theory and play an important role. Also, any unipotent linear algebraic group over a non-archimedean local field is locally elliptic, so there is potential, as a consequence of the results in this paper, to construct an analogous theory of spectral synthesis and weighted algebras on such groups as there is in the case of connected nilpotent Lie groups \cite{Lud83b,Lud83a,LMB10,LMBP13,BL16}.

The motivation and goals of the present paper are the following:

\begin{enumerate}[(i)]
   \item To show that many of the arguments and results for weighted Orlicz $*$-algebras on compactly generated groups of polynomial growth hold equally well for locally elliptic groups. In certain aspects, the theory in the locally elliptic case is even cleaner than the compactly generated case.
   \item Give new examples of weighted Orlicz $*$-algebras that have nice Banach algebra properties, such as being (quasi-)Hermitian, (weakly-)Wiener and $*$-regular, and provide a range of examples of groups and weights that fit into this theory.
   \item Initiate further research into understanding the harmonic analysis of tdlc groups and present the work in a way that is relatively accessible to researchers in tdlc group theory.
\end{enumerate}

As a consequence of the above motives, particular (iii), we choose to be more elaborate in our exposition so that the paper is accessible to researchers who are not experts in the theory of Banach algebras nor familiar with this line of research. In particular, we note well that a number of the arguments in this paper are already well known and written down in the literature, but we choose to include them here for completeness and expository purposes. 

After going through some preliminaries and introductory material on Banach $*$-algebras, Orlicz spaces and locally elliptic groups in Section \ref{sec:2}, we study the properties of weights on locally elliptic groups in Section \ref{sec:3}, which is critical to our later results. In particular, in Proposition \ref{prop:GRSw}, we show that every weight on a locally elliptic group satisfies the GRS condition, which is an important condition used for obtaining Hermitianness of weighted $L^1$-algebras in the context of compactly generated groups of polynomial growth \cite{FGL06}. We also show that every weight $\omega$ on a locally elliptic group is dominated by a sub-additive weight $\omega^\sharp_1$ such that $1/\omega^\sharp_1 \in L^1(G) \cap L^\infty(G)$, see Proposition \ref{prop:polyw1}. Since $\omega^\sharp_1$ is sub-additive and $1/\omega^\sharp_1 \in L^1(G) \cap L^\infty(G)$, it can be shown that $L^\Phi(G,\omega^\sharp_1)$ is a Banach $*$-algebra for any Young function $\Phi$ \cite[Theorem 4.5]{OS17}. In particular, every weight on a locally elliptic group is dominated by a sub-additive $L^\Phi$-weight, for any given Young function $\Phi$. This later property is used extensively throughout the article.

To construct the weight $\omega^\sharp_1$ as in Proposition \ref{prop:polyw1}, we need to write the locally elliptic group $G$ as an ascending union of compact open subgroups $G = \bigcup_{i=1}^\infty K_i$ such that the sequence of indices $([K_{i+1}:K_i])_{i=1}^\infty$ is non-decreasing. Such a set of compact open subgroups $(K_i)_{i\in \mathbb{N}}$ satisfying the property that the sequence of indices $([K_{i+1}:K_i])_{i=1}^\infty$ is non-decreasing will be called a \hlight{standard decomposition} of $G$. It is obvious that any locally elliptic group has a standard decomposition.

The following result is a consequence of the above results on weights. We note that the assumption in the following theorem that $1/\omega \in L^\Psi(G)$ implies $L^\Phi(G,\omega) \subseteq L^1(G)$ by the Hölder inequality for Orlicz spaces \cite[Section 3.3]{RR91}. Also, we use $\nu_\mathcal{A}$ to denote the spectral radius function on a Banach algebra $\mathcal{A}$.

\begin{thm}\label{thm:1}
Let $G$ be a locally elliptic group, $(K_i)_{i \in \mathbb{N}}$ a standard decomposition of $G$, $(\Phi,\Psi)$ a complementary pair of Young functions and $\omega$ an $L^\Phi$-weight on $G$ with $1/\omega \in L^\Psi(G)$. Define $\omega_1^\sharp$ to be the weight constructed in Proposition \ref{prop:polyw1} with respect to the compact open subgroups $(K_i)_{i\in \mathbb{N}}$. Then, for all $f \in L^\Phi(G,\omega^\sharp_1) \subseteq L^\Phi(G,\omega)$, $\nu_{L^\Phi(G,\omega)}(f) = \nu_{L^1(G)}(f)$.
\end{thm}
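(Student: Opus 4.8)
The plan is to establish the equality by sandwiching $\nu_{L^\Phi(G,\omega)}(f)$ between two copies of $\nu_{L^1(G)}(f)$, using throughout the Beurling--Gelfand formula $\nu_{\mathcal A}(f)=\lim_{n\to\infty}\norm{f^{*n}}_{\mathcal A}^{1/n}$, where $f^{*n}$ is the $n$-fold convolution power. First I would record two elementary monotonicity inequalities. Since $1/\omega\in L^\Psi(G)$, the Hölder inequality for Orlicz spaces gives a continuous inclusion $L^\Phi(G,\omega)\hookrightarrow L^1(G)$, whence $\norm{f^{*n}}_{L^1(G)}\le C\,\norm{f^{*n}}_{L^\Phi(G,\omega)}$ and so $\nu_{L^1(G)}(f)\le\nu_{L^\Phi(G,\omega)}(f)$. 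On the other hand, because $\omega\le\omega_1^\sharp$ pointwise and the Orlicz norm is monotone, $\norm{g}_{L^\Phi(G,\omega)}\le\norm{g}_{L^\Phi(G,\omega_1^\sharp)}$ for every $g$; applying this to $g=f^{*n}$ and taking $n$-th roots gives $\nu_{L^\Phi(G,\omega)}(f)\le\nu_{L^\Phi(G,\omega_1^\sharp)}(f)$, a finite quantity since $f\in L^\Phi(G,\omega_1^\sharp)$ and the latter is a Banach $*$-algebra. It therefore suffices to prove the single inequality $\nu_{L^\Phi(G,\omega_1^\sharp)}(f)\le\nu_{L^1(G)}(f)$.

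This last inequality is the heart of the matter, and here I would exploit that $\omega_1^\sharp$ is \emph{sub-additive}. Fix $g\in L^\Phi(G,\omega_1^\sharp)$, noting $g\in L^1(G)$ by the inclusion above. Expanding the $n$-fold convolution and applying sub-additivity $\omega_1^\sharp(x_1\cdots x_n)\le\sum_{j=1}^n\omega_1^\sharp(x_j)$ under the integral yields the pointwise estimate
\[
 |g^{*n}(x)|\,\omega_1^\sharp(x)\le\sum_{j=1}^n\bigl(|g|^{*(j-1)}*(|g|\,\omega_1^\sharp)*|g|^{*(n-j)}\bigr)(x).
\]
Taking the $L^\Phi(G)$-norm and applying the Young-type inequality $\norm{u*v}_{L^\Phi(G)}\le\norm{u}_{L^1(G)}\norm{v}_{L^\Phi(G)}$ to each of the $n$ summands — keeping the single $L^\Phi$-factor on the distinguished slot $|g|\,\omega_1^\sharp$ and absorbing the remaining $n-1$ copies of $|g|$ into $L^1$-norms — gives
\[
 \norm{g^{*n}}_{L^\Phi(G,\omega_1^\sharp)}\le n\,\norm{g}_{L^1(G)}^{\,n-1}\,\norm{g}_{L^\Phi(G,\omega_1^\sharp)}.
\]
Raising to the power $1/n$ and letting $n\to\infty$ (using $n^{1/n}\to1$ and $\norm{g}_{L^\Phi(G,\omega_1^\sharp)}^{1/n}\to1$) produces the bound $\nu_{L^\Phi(G,\omega_1^\sharp)}(g)\le\norm{g}_{L^1(G)}$.

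Finally I would upgrade this norm bound to a spectral-radius bound by a bootstrapping trick. Applying the previous estimate with $g=f^{*m}$, which again lies in $L^\Phi(G,\omega_1^\sharp)$, and using $\nu_{L^\Phi(G,\omega_1^\sharp)}(f^{*m})=\nu_{L^\Phi(G,\omega_1^\sharp)}(f)^m$, one gets $\nu_{L^\Phi(G,\omega_1^\sharp)}(f)\le\norm{f^{*m}}_{L^1(G)}^{1/m}$ for every $m$; letting $m\to\infty$ gives $\nu_{L^\Phi(G,\omega_1^\sharp)}(f)\le\nu_{L^1(G)}(f)$ and closes the sandwich. The main obstacle, and the step that makes everything work cleanly, is the sub-additive pointwise estimate: it is precisely sub-additivity of $\omega_1^\sharp$ (rather than mere sub-multiplicativity) that lets me isolate a single weighted factor and thereby trade the weight for a harmless polynomial factor $n$ that disappears after taking roots. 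With only sub-multiplicativity one would instead be forced to pass through $\nu_{L^1(G,\omega)}$ and invoke the GRS condition of Proposition \ref{prop:GRSw}; the sub-additive route sidesteps this, and is a concrete instance of why the locally elliptic case is cleaner than the compactly generated one.
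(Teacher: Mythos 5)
Your proposal is correct, and its skeleton (sandwich the spectral radius, reduce to $\omega^\sharp_1$ by monotonicity of the Orlicz norm, prove $\nu_{L^\Phi(G,\omega^\sharp_1)}(g)\le\norm{g}_{L^1(G)}$, then bootstrap via $\nu(f^{*m})=\nu(f)^m$) is exactly the paper's. Where you diverge is the middle inequality: the paper does not unpack sub-additivity at all, but instead invokes Proposition \ref{prop:Orc2}(i) as a black box, namely $\norm{f*f}_{L^\Phi(G,\omega^\sharp_1)}\le 2C_2\norm{f}_{L^\Phi(G,\omega^\sharp_1)}\norm{f}_{L^1(G)}$, and iterates it along the dyadic powers $f^{*2^n}$ to get $\norm{f^{*2^n}}_{L^\Phi(G,\omega^\sharp_1)}\le(2C_2)^n\norm{f}_{L^\Phi(G,\omega^\sharp_1)}\norm{f}_{L^1(G)}^{2^n-1}$, whence the bound $\nu_{L^\Phi(G,\omega^\sharp_1)}(f)\le\norm{f}_{L^1(G)}$ after extracting $2^{-n}$-th roots. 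Your route re-derives the needed estimate from first principles: the pointwise inequality $\lvert g^{*n}\rvert\,\omega^\sharp_1\le\sum_{j=1}^n \lvert g\rvert^{*(j-1)}*(\lvert g\rvert\,\omega^\sharp_1)*\lvert g\rvert^{*(n-j)}$ gives the linear-in-$n$ bound $\norm{g^{*n}}_{L^\Phi(G,\omega^\sharp_1)}\le n\norm{g}_{L^1(G)}^{n-1}\norm{g}_{L^\Phi(G,\omega^\sharp_1)}$ for \emph{all} $n$, which is in effect a proof of (the iterated consequence of) Proposition \ref{prop:Orc2}(i) rather than a citation of it; this makes your argument more self-contained, at the cost of two details you should make explicit. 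First, the paper's definition of sub-additive allows a constant $C$, so the iterated inequality $\omega^\sharp_1(x_1\cdots x_n)\le\sum_j\omega^\sharp_1(x_j)$ you use is not automatic from the definition; it does hold here because $\omega^\sharp_1$ is of the form $\omega_{\bf a}$ with $\bf a$ non-decreasing (this is precisely where the standard decomposition enters) and hence satisfies the stronger estimate $\omega^\sharp_1(xy)\le\max\{\omega^\sharp_1(x),\omega^\sharp_1(y)\}$, which iterates with no constant; alternatively, even with a constant $C$ one gets factors that still wash out in your bootstrap. Second, your Young-type estimate needs the \emph{two-sided} $L^1$-module property of $L^\Phi(G)$ (the distinguished factor $\lvert g\rvert\,\omega^\sharp_1$ sits in an interior slot, so you convolve by $L^1$-functions on both sides); this holds because locally elliptic groups are unimodular, but Proposition \ref{prop:Orc3}(iii) as stated only gives the left-module inequality, so a word of justification is warranted.
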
 

As already mentioned earlier, as a consequence of Losert's work \cite{Los01}, if $G$ is a compactly generated group of polynomial growth, $L^1(G)$ is always Hermitian. Then, if $\omega$ is a weight on $G$, $L^1(G,\omega)$ is Hermitian if and only if $\omega$ satisfies the GRS condition \cite[Theorem 1.3]{FGL06}. In contrast, if we now let $G$ be a locally elliptic group, $L^1(G)$ is not always Hermitian (see Section \ref{sec:4}), however, it is quasi-Hermitian as a consequence of \cite[Remark 4.10]{SW20}. Part (i) of the following theorem is then a consequence of Theorem \ref{thm:1}, but it can also been proved via the fact that every weight on a locally elliptic group satisfies the GRS condition. Part (ii) follows from \cite[Theorem 4.5]{OS17}. The proof of Theorem \ref{thm:2} is found in Section \ref{sec:4}. In the following, we use $\sigma_\mathcal{A}(x)$ to denote the spectrum of an element $x$ in a Banach algebra $\mathcal{A}$.

\begin{thm}\label{thm:2}
Let $G$ be a locally elliptic group, $(\Phi,\Psi)$ a complementary pair of Young functions and $\omega$ an $L^\Phi$-weight on $G$ with $1/\omega \in L^\Psi(G)$. The following hold:
\begin{enumerate}[(i)]
   \item  The Banach $*$-algebra $L^\Phi(G,\omega)$ is quasi-Hermitian i.e.\ for every self-adjoint function $f \in C_c(G) \subseteq L^\Phi(G,\omega)$, $\sigma_{L^\Phi(G,\omega)}(f) \subseteq \mathbb{R}$; \
   \item If $G$ is Hermitian and $\omega$ sub-additive, then for all $f \in L^\Phi(G,\omega)$, $\sigma_{L^\Phi(G,\omega)}(f) = \sigma_{L^1(G)}(f)$. In particular, $L^\Phi(G,\omega)$ is Hermitian.
\end{enumerate}
\end{thm}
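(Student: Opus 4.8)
The plan is to compare the spectra taken in $L^\Phi(G,\omega)$ with those taken in $L^1(G)$, using the spectral-radius identity of Theorem \ref{thm:1} as the engine and the group $C^*$-algebra $C^*(G)$ as an intermediary. Since a locally elliptic group is an increasing union of compact (hence amenable) subgroups it is amenable, and $C_c(G)$ sits as a dense $*$-subalgebra inside each of the continuously and injectively embedded $*$-algebras $L^\Phi(G,\omega^\sharp_1) \subseteq L^\Phi(G,\omega) \subseteq L^1(G) \subseteq C^*(G)$. The guiding principle is a Hulanicki-type lemma: if a Banach $*$-algebra is continuously and densely embedded in a $C^*$-algebra and its spectral radius agrees with the $C^*$-norm on self-adjoint elements, then it is symmetric and inverse-closed in that $C^*$-algebra. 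The entire argument consists in arranging, through Theorem \ref{thm:1}, that this hypothesis holds to the extent each part requires.

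For part (i), fix a self-adjoint $f \in C_c(G) \subseteq L^\Phi(G,\omega^\sharp_1)$. By Theorem \ref{thm:1}, $\nu_{L^\Phi(G,\omega)}(f) = \nu_{L^1(G)}(f)$, and by the same reasoning --- using that $\omega^\sharp_1$, like every weight on $G$, satisfies the GRS condition (Proposition \ref{prop:GRSw}) --- one also gets $\nu_{L^\Phi(G,\omega^\sharp_1)}(f) = \nu_{L^1(G)}(f)$. Because $L^1(G)$ is quasi-Hermitian (\cite[Remark 4.10]{SW20}), the spectral radius of $f$ in $L^1(G)$ coincides with $\norm{f}_{C^*(G)}$; hence the spectral radius in $L^\Phi(G,\omega^\sharp_1)$ equals $\norm{f}_{C^*(G)}$ for every self-adjoint $f$ in the dense $*$-subalgebra $C_c(G)$. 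I would then invoke the Hulanicki-type lemma (in its version for quasi-Hermitian algebras, as in \cite{SW20}) to conclude $\sigma_{L^\Phi(G,\omega^\sharp_1)}(f) \subseteq \mathbb{R}$. Finally, since $L^\Phi(G,\omega^\sharp_1) \subseteq L^\Phi(G,\omega)$ as Banach $*$-algebras, passing to the larger algebra only shrinks the spectrum, $\sigma_{L^\Phi(G,\omega)}(f) \subseteq \sigma_{L^\Phi(G,\omega^\sharp_1)}(f) \subseteq \mathbb{R}$, which is exactly quasi-Hermitianity. Alternatively one bypasses $C^*(G)$ and argues directly from the GRS condition for $\omega$, as indicated after the statement.

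For part (ii), the hypotheses are stronger and yield a statement about all of $L^\Phi(G,\omega)$. Sub-additivity of $\omega$ implies $\omega(x^n) \le n\,\omega(x)$, so $\omega$ satisfies the GRS condition, and it guarantees (with $1/\omega \in L^\Psi(G)$) that $L^\Phi(G,\omega)$ is a Banach $*$-algebra contained in $L^1(G)$. Now $G$ Hermitian means $L^1(G)$ is Hermitian, so $\nu_{L^1(G)}(f) = \norm{f}_{C^*(G)}$ holds for every self-adjoint $f \in L^1(G)$, not merely on $C_c(G)$. Combined with the spectral-radius identity this gives $\nu_{L^\Phi(G,\omega)}(f) = \norm{f}_{C^*(G)}$ for all self-adjoint $f \in L^\Phi(G,\omega)$, and the full Hulanicki-type lemma then makes $L^\Phi(G,\omega)$ Hermitian and inverse-closed in $L^1(G)$; inverse-closedness is precisely the assertion $\sigma_{L^\Phi(G,\omega)}(f) = \sigma_{L^1(G)}(f)$ for all $f \in L^\Phi(G,\omega)$. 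This is the content packaged in \cite[Theorem 4.5]{OS17}, which I would cite directly.

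The main obstacle is the step that upgrades equality of spectral radii to reality (respectively, equality) of spectra. Since $L^\Phi(G,\omega)$ is only dense --- not closed --- in $L^1(G)$, the classical ``filling-in of holes'' available for closed subalgebras does not apply, and spectral-radius equality alone cannot rule out spurious non-real spectrum lying strictly inside the spectral disk. What saves the argument is the $*$-structure: self-adjointness forces the spectrum to be symmetric about $\mathbb{R}$, and comparison with the $C^*$-norm through a Hulanicki-type lemma is what actually eliminates the imaginary part. Getting the hypotheses of that lemma to hold on a large enough part of the algebra is the delicate point --- this is why part (i), where Theorem \ref{thm:1} only controls $L^\Phi(G,\omega^\sharp_1)$, must be routed through the subalgebra $L^\Phi(G,\omega^\sharp_1)$ and the spectrum-shrinking inclusion, whereas part (ii) needs the global identity coming from genuine Hermitianity of $L^1(G)$ together with sub-additivity of $\omega$.
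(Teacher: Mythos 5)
Your proposal is correct, and its skeleton matches the paper's: part (ii) is handled exactly as in the paper, by citing \cite[Theorem 4.5]{OS17} (Proposition \ref{prop:Orc2}), and part (i) rests on the same two ingredients, Theorem \ref{thm:1} and the quasi-Hermitianness of $L^1(G)$ from \cite[Remark 4.10]{SW20}. Where you genuinely diverge is the mechanism that upgrades equality of spectral radii to reality of spectra in (i). The paper never leaves the pair $L^\Phi(G,\omega) \subseteq L^1(G)$: it invokes Lemma \ref{lem:spec} (\cite[Lemma 3.1]{FGL06}) to get $\partial\sigma_{L^\Phi(G,\omega)}(f) \subseteq \sigma_{L^1(G)}(f) \subseteq \mathbb{R}$, and then uses the elementary fact that a compact subset of $\mathbb{C}$ whose topological boundary lies in $\mathbb{R}$ is itself contained in $\mathbb{R}$; no $C^*$-theory is needed. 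You instead route through $C^*(G)$, which requires a Raikov-type criterion in one direction (quasi-Hermitianness of $L^1(G)$ forcing $\nu_{L^1(G)}(f) = \norm{f}_{C^*(G)}$ on self-adjoint $f \in C_c(G)$) and a Hulanicki-type lemma in the other; these quasi-Hermitian variants are available in essence in \cite{SW20}, so the argument goes through, but it is markedly heavier machinery, and you must check that the dense set carrying the radius--norm identity is a $*$-subalgebra (here $C_c(G)$ is, so you are fine). What your route buys is rigor on a point the paper's own write-up glosses over: Theorem \ref{thm:1} gives the radius identity only for $f \in L^\Phi(G,\omega^\sharp_1)$, not on all of $L^\Phi(G,\omega)$, so Lemma \ref{lem:spec} cannot literally be applied to the pair $(L^\Phi(G,\omega), L^1(G))$; your detour through $L^\Phi(G,\omega^\sharp_1)$ followed by the spectrum-shrinking inclusion $\sigma_{L^\Phi(G,\omega)}(f) \subseteq \sigma_{L^\Phi(G,\omega^\sharp_1)}(f)$ is exactly the right repair (and would also tighten the paper's proof). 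Two small slips to fix: the identity $\nu_{L^\Phi(G,\omega^\sharp_1)}(f) = \nu_{L^1(G)}(f)$ comes from applying Theorem \ref{thm:1} (or its proof) with $\omega^\sharp_1$ in the role of $\omega$, not from the GRS condition; and in (ii), sub-additivity gives $\omega(x^n) \le C(\omega(x^{n-1})+\omega(x))$, so your bound $\omega(x^n) \le n\,\omega(x)$ is off by the constant --- GRS still follows by dyadic splitting, or simply from Proposition \ref{prop:GRSw} --- while the global radius identity on all of $L^\Phi(G,\omega)$ needs Theorem \ref{thm:1}'s proof rerun with the sub-additive $\omega$ itself; since you ultimately cite \cite[Theorem 4.5]{OS17} directly, as the paper does, neither affects correctness.
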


Although a locally elliptic group $G$ is not always Hermitian, one can always construct a weight $\omega$ on $G$ such that $L^1(G,\omega)$ is Hermitian (see Section \ref{sec:3} and Section \ref{sec:4}). Also, we give a number of examples of Hermitian locally elliptic groups and sub-additive $L^\Phi$-weights on these groups in Section \ref{sec:8}.

In Section \ref{sec:5}, we prove that every weighted Orlicz $*$-algebra on a locally elliptic group is weakly-Wiener. The proof requires one to construct a functional calculus for certain smooth periodic functions. This functional calculus is also constructed in Section \ref{sec:5} and makes use of our results about weights on locally elliptic groups.

\begin{thm}\label{thm:3}
Let $G$ be a locally elliptic group, $(\Phi,\Psi)$ a complementary pair of Young funcitons and $\omega$ an $L^\Phi$-weight on $G$ such that $1/\omega \in L^\Psi(G)$. The Banach $*$-algebra $L^\Phi(G,\omega)$ is weakly-Wiener. In particular, if $L^\Phi(G,\omega)$ is Hermitian, then it is Wiener.
\end{thm}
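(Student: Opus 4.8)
The plan is to prove the weakly-Wiener property by a non-commutative analogue of Wiener's Tauberian theorem, in which the smooth periodic functional calculus plays the role that Wiener's lemma plays classically; the closing ``in particular'' is then a general implication for Hermitian Banach $*$-algebras. Write $C^*(G)$ for the enveloping $C^*$-algebra of $L^\Phi(G,\omega)$. Since $1/\omega \in L^\Psi(G)$ forces $L^\Phi(G,\omega) \subseteq L^1(G)$ as a dense $*$-subalgebra with the same topologically irreducible $*$-representations, this enveloping algebra is just the group $C^*$-algebra $C^*(G)$, and $\Prim_*\bigl(L^\Phi(G,\omega)\bigr)$ is canonically homeomorphic to $\Prim\bigl(C^*(G)\bigr)$. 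Weakly-Wiener means that every \emph{self-adjoint} proper closed two-sided ideal is contained in the kernel of a topologically irreducible $*$-representation; equivalently, a self-adjoint closed two-sided ideal $I$ with $\hull(I)=\emptyset$ must equal $L^\Phi(G,\omega)$. As $\hull(I)=\emptyset$ says exactly that the closure of $I$ in $C^*(G)$ lies in no primitive ideal, hence is all of $C^*(G)$, the whole problem is to upgrade density of $I$ in $C^*(G)$ to the equality $I=L^\Phi(G,\omega)$.

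First I would reduce this to showing that $I$ contains each averaging projection $e_U := \mathbf{1}_U/\mu(U)$ over a compact open subgroup $U$ of $G$. By van Dantzig's theorem the identity of the totally disconnected group $G$ has a neighbourhood basis of compact open subgroups, and the corresponding self-adjoint idempotents $e_U$ form an approximate identity of $L^\Phi(G,\omega)$; since a closed ideal containing an approximate identity is the whole algebra, it suffices to prove $e_U \in I$ for every such $U$. Fixing $U$, the corner $e_U L^\Phi(G,\omega) e_U$ is a unital Banach $*$-algebra with unit $e_U$ whose $C^*$-completion is $e_U C^*(G) e_U$. Because $I$ is dense in $C^*(G)$, the self-adjoint set $e_U I e_U \subseteq I$ is dense in $e_U C^*(G)e_U$, so (using $C_c(G) \subseteq L^\Phi(G,\omega^\sharp_1)$ and standard approximation) one can choose a self-adjoint $c \in I \cap L^\Phi(G,\omega^\sharp_1)$ in this corner with $\norm{e_U - c}_{C^*(G)} < 1$; then $c$ is invertible in the $C^*$-corner with spectrum bounded away from $0$ in $(0,\infty)$. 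If $c$ is also invertible inside the Banach corner $e_U L^\Phi(G,\omega) e_U$, then $e_U = c\,c^{-1} \in I$ because $c \in I$ and $I$ is an ideal.

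The remaining --- and decisive --- point is thus to show that each such self-adjoint $c$, invertible in $e_U C^*(G) e_U$, is already invertible in $e_U L^\Phi(G,\omega) e_U$; that is, the corner is inverse-closed in its $C^*$-completion. This is where I would construct the smooth periodic functional calculus. After rescaling $c$ so that its spectrum lies in an interval of length less than $2\pi$, and choosing a $2\pi$-periodic $F \in C^\infty(\mathbb{T})$ agreeing with $t\mapsto 1/t$ near $\sigma(c)$, I would set $F(c) := \sum_{k\in\mathbb{Z}} \widehat{F}(k)\,\exp(ikc)$ and verify that this series converges in $L^\Phi(G,\omega)$ and that $F\mapsto F(c)$ is a $*$-homomorphism into the corner. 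Convergence rests on the crucial estimate that the weighted norms $\norm{\exp(ikc)}_{L^\Phi(G,\omega)}$ grow only polynomially in $k$, which I would extract from the polynomial-type growth of the sub-additive weight $\omega^\sharp_1$ (Proposition \ref{prop:polyw1}); here the rapid decay of $\widehat{F}(k)$ coming from smoothness of $F$ defeats the polynomial growth. The spectral radius identity $\nu_{L^\Phi(G,\omega)}(\cdot)=\nu_{L^1(G)}(\cdot)$ of Theorem \ref{thm:1}, together with quasi-Hermitianness (Theorem \ref{thm:2}(i)), then guarantees that $\sigma_{L^\Phi(G,\omega)}(c)$ is real and coincides with $\sigma_{C^*(G)}(c)$, so that $F(c)$ genuinely inverts $c$. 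Granting this, $c^{-1}=F(c) \in e_U L^\Phi(G,\omega)e_U$, whence $e_U \in I$ for every $U$ and $I=L^\Phi(G,\omega)$, establishing the weakly-Wiener property.

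For the ``in particular'' clause I would invoke the standard implication that a Hermitian weakly-Wiener Banach $*$-algebra is Wiener. Given an arbitrary proper closed two-sided ideal $I$, the self-adjoint hull $\overline{I+I^*}$ is a closed two-sided ideal satisfying $\hull(\overline{I+I^*})=\hull(I)$, because $\pi(I)=0 \iff \pi(I^*)=0$ for $*$-representations $\pi$, so $\hull(I)=\hull(I^*)$ and $\hull(I+I^*)=\hull(I)\cap\hull(I^*)=\hull(I)$. Hermitianness is exactly what ensures that properness passes to $\overline{I+I^*}$; weakly-Wiener then gives $\hull(\overline{I+I^*})\ne\emptyset$, hence $\hull(I)\ne\emptyset$, so $L^\Phi(G,\omega)$ is Wiener. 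The main obstacle throughout is the construction and convergence of the smooth periodic functional calculus inside $L^\Phi(G,\omega)$ --- equivalently, the inverse-closedness (non-commutative Wiener's lemma) of the corners --- since every surrounding reduction is formal once the polynomial norm growth of $\exp(ikc)$ is in hand.
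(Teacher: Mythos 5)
Your decisive step is precisely the one that fails. You reduce everything to the claim that the corner $e_U L^\Phi(G,\omega)e_U$ is inverse-closed in $e_U C^*(G)e_U$, at least for self-adjoint elements at $C^*$-distance less than $1$ from $e_U$. This claim is false in general, and it cannot be repaired, because Theorem \ref{thm:3} is asserted for \emph{all} locally elliptic groups and $L^\Phi$-weights, including non-Hermitian ones. Indeed, if $a$ is a self-adjoint element of the unital Banach corner whose spectrum there contains some $\lambda = \alpha + i\beta$ with $\beta \neq 0$, then $d := (a-\alpha e_U)^2 + \beta^2 e_U = (a-\lambda e_U)(a-\bar{\lambda}e_U)$ is self-adjoint and bounded below by $\beta^2$ in the $C^*$-corner, so $c := d/\norm{d}_{C^*(G)}$ is self-adjoint with $\norm{e_U - c}_{C^*(G)} < 1$; yet $d$ (hence $c$) is not invertible in the Banach corner, since its two commuting factors would then be invertible, contradicting $\lambda \in \sigma(a)$. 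So your restricted inverse-closedness claim already forces the corner to be Hermitian. Taking $G$ to be the Fountain--Ramsay--Williamson or Leptin--Hulanicki group of Section \ref{sec:8} with $\omega \equiv 1$, so that $L^\Phi(G,\omega) = \ell^1(G)$ and the corner at $U = \{e\}$ is the whole algebra, your claim would make $\ell^1(G)$ Hermitian, which it is not. The functional calculus cannot rescue the step: it is only defined for self-adjoint elements of $L^\Phi(G,\omega)\cap L^1(G,\omega^\sharp_1)\cap L^2(G)$, whereas your $c$ lies merely in $I$, and $C^*$-density of $I$ gives no approximation in those norms; moreover the true bound is $\norm{u(inf)}_{L^\Phi(G,\omega)} = O(e^{2n^\gamma})$ with $\log_2(4/3) < \gamma < 1$ (Lemma \ref{lem:lphibound}), which is subexponential, not polynomial, so the Fourier coefficients of a merely $C^\infty$ function $F$ do not decay fast enough --- one must work in the Beurling--Domar algebra $A_\gamma$ of Proposition \ref{prop:Agamma}, whose elements have coefficients summable against $e^{2\lvert n \rvert^\gamma}$.

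You are also proving the wrong statement. In this paper, weakly-Wiener means that \emph{every} proper closed two-sided ideal (self-adjoint or not) is contained in some $J \in \Prim(L^\Phi(G,\omega))$, the annihilator of a simple module; no $*$-representations are involved. Your reformulation in terms of self-adjoint ideals and kernels of topologically irreducible $*$-representations is a different property, and your $\overline{I+I^*}$ reduction then targets the wrong notion: with the paper's definitions, the ``in particular'' clause is immediate, since Hermitian means $\Prim(\mathcal{A}) \subseteq \Prim_*(\mathcal{A})$ by Proposition \ref{prop:sym}. Note also that the identifications $C^*(L^\Phi(G,\omega)) \cong C^*(G)$ and $\Prim_*(L^\Phi(G,\omega)) \cong \Prim(C^*(G))$ that you use at the outset are established in the paper only under the hypothesis $\Phi \in \Delta_2$, which Theorem \ref{thm:3} does not assume. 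The paper's proof avoids all of these issues by applying the $A_\gamma$-calculus not to elements of $I$ but to a self-adjoint, compactly supported bounded approximate identity $(f_j)$: if $\varphi\{f_j\} \in I$ for all $j$, then $f*g \in I$ for all $f,g \in C_c(G)$ and $I$ is everything, so some $\varphi\{f_j\} \notin I$; choosing $\psi \in A_\gamma$ equal to $1$ on $\supp(\varphi)$ gives $\psi\{f_j\}*\varphi\{f_j\} = \varphi\{f_j\} \neq 0$ in $L^\Phi(G,\omega)/I$, so the quotient is not a radical algebra, hence admits an algebraically irreducible representation, whose lift yields the required $J \in \Prim(L^\Phi(G,\omega))$ containing $I$.
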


Specialising the above theorem to the case of $L^1(G)$, we also see that every locally elliptic group is weakly-Wiener. We note this was already known since \cite{Lud79}, but we list it here to be explicit.

\begin{cor}
Every locally elliptic group is weakly-Wiener. In particular, every Hermitian locally elliptic group is Wiener.
\end{cor}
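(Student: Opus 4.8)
The plan is to read off this corollary from Theorem \ref{thm:3} by specialising the weighted Orlicz $*$-algebra to $L^1(G)$ itself. By the conventions fixed in the introduction, a locally compact group is weakly-Wiener (resp.\ Wiener) exactly when $L^1(G)$ has that property, so it is enough to present $L^1(G)$ as an algebra $L^\Phi(G,\omega)$ that meets the hypotheses of Theorem \ref{thm:3}.

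Concretely, I would take the Young function $\Phi(x)=\lvert x\rvert$, whose associated Orlicz space is the classical space $L^1(G)$, together with the trivial weight $\omega\equiv 1$; then $L^\Phi(G,\omega)=L^1(G)$ as Banach $*$-algebras. Here $\omega\equiv 1$ is a legitimate $L^\Phi$-weight, since a locally elliptic group is unimodular (being an ascending union of compact open subgroups) and hence $L^1(G)$ is a Banach $*$-algebra under convolution and the given involution. It then remains only to check the standing hypothesis $1/\omega\in L^\Psi(G)$: the Young function complementary to $\Phi(x)=\lvert x\rvert$ is the degenerate one that vanishes on $[-1,1]$ and equals $+\infty$ elsewhere, so that $L^\Psi(G)=L^\infty(G)$, and since $1/\omega\equiv 1$ is bounded we indeed have $1/\omega\in L^\infty(G)=L^\Psi(G)$.

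With these choices Theorem \ref{thm:3} applies and gives at once that $L^1(G)=L^\Phi(G,\omega)$ is weakly-Wiener, that is, that $G$ is weakly-Wiener. For the second assertion I would appeal to the ``in particular'' clause of the same theorem: if $G$ is Hermitian then $L^1(G)$ is Hermitian by definition, and that clause then yields that $L^1(G)$ is Wiener, i.e.\ that $G$ is Wiener.

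The deduction is essentially immediate, and the one place calling for a moment's attention is that the choice $\Phi(x)=\lvert x\rvert$ leads to a degenerate complementary function $\Psi$; one must therefore confirm that the Orlicz-space formalism of Section \ref{sec:Orc} and the statement of Theorem \ref{thm:3} remain valid in this boundary case, equivalently that $L^1(G)$ genuinely lies within the scope of that theorem. Should one prefer to sidestep the degenerate pair, the alternative route—which is what I would verify as a safeguard—is that the proof of Theorem \ref{thm:3} specialises to $L^1(G)$ without change, since the tools it relies on (quasi-Hermitianness from Theorem \ref{thm:2} and the periodic functional calculus) are all available for $L^1(G)$.
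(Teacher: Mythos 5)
Your proof is correct and follows essentially the same route as the paper, which simply notes that the corollary is Theorem \ref{thm:3} specialised to $L^1(G)$. Your explicit realisation of $L^1(G)$ as $L^\Phi(G,\omega)$ via $\Phi(x)=\lvert x\rvert$, $\omega\equiv 1$, and the degenerate complementary function $\Psi$ (with $L^\Psi(G)=L^\infty(G)$, so $1/\omega\in L^\Psi(G)$ holds) just fills in details the paper leaves implicit, and this boundary case is indeed covered by the paper's conventions, since Young functions there are allowed to take the value $+\infty$.
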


Section \ref{sec:6} studies the representation theory, $*$-regularity property and $C^*$-enveloping algebras of a weighted Orlicz $*$-algebra on a locally elliptic group. The main result is the following theorem.
In the statement of the theorem, the condition that $\Phi \in \Delta_2$ is defined in Definition \ref{dfn:delta2}, and this assumption, for example, guarantees that $L^\Phi(G,\omega)^* = L^\Psi(G,\omega^{-1})$, where $\Psi$ is the complementary Young function to $\Phi$.

\begin{thm}\label{thm:4}
Let $G$ be a locally elliptic group, $(\Phi,\Psi)$ a complementary pair of Young functions with $\Phi \in \Delta_2$, and $\omega$ an $L^\Phi$-weight on $G$ such that $1/\omega \in L^\Psi(G)$. The following hold:
\begin{enumerate}[(i)]
   \item $C^*(L^\Phi(G,\omega)) \cong C^*(G)$;\
   \item The algebra $L^\Phi(G,\omega)$ is $*$-regular, in particular, $\Prim_*(L^\Phi(G,\omega))$, $\Prim_*(L^1(G))$ and $\Prim(C^*(G))$ are homeomorphic.
\end{enumerate}
\end{thm}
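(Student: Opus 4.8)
The plan is to handle both parts through the representation theory of $G$, reducing everything to the group $C^*$-algebra. For part (i), I would first record that $C_c(G)$ is a common dense $*$-subalgebra of $L^\Phi(G,\omega)$ and of $L^1(G)$, density in $L^\Phi(G,\omega)$ being exactly where the hypothesis $\Phi \in \Delta_2$ enters. The normalised characteristic functions $\mu(V)^{-1}\chi_{V}$, as $V$ ranges over the compact open subgroups of $G$ ordered by reverse inclusion, form a two-sided bounded approximate identity of self-adjoint idempotents ($u^* = u = u * u$), so the algebra is non-degenerate and every $*$-representation may be taken non-degenerate. The crucial step is to show that every $*$-representation $\pi$ of $L^\Phi(G,\omega)$ is $\|\cdot\|_1$-contractive on $C_c(G)$: for $f \in C_c(G)$ the operator $\pi(f^* * f)$ is positive, so
\[
\|\pi(f)\|^2 = \|\pi(f^* * f)\| = r\big(\pi(f^* * f)\big) \le \nu_{L^\Phi(G,\omega)}(f^* * f) = \nu_{L^1(G)}(f^* * f) \le \|f\|_1^2,
\]
where $r(\cdot)$ is the spectral radius, the first inequality is that a homomorphism cannot enlarge the spectral radius, and the middle equality is Theorem \ref{thm:1} applied to $f^* * f \in C_c(G) \subseteq L^\Phi(G,\omega_1^\sharp)$. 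Hence $\pi$ extends uniquely by continuity to a $*$-representation of $L^1(G)$; conversely every $*$-representation of $L^1(G)$ restricts to one of $L^\Phi(G,\omega)$ through the continuous inclusion $L^\Phi(G,\omega) \subseteq L^1(G)$.

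Consequently the $*$-representations of the two algebras agree on $C_c(G)$, so their universal $C^*$-seminorms coincide there, and passing to completions gives $C^*(L^\Phi(G,\omega)) \cong C^*(L^1(G)) = C^*(G)$, which is part (i). I emphasise that this route uses Theorem \ref{thm:1} in place of symmetry, so no Hermitianness of $L^\Phi(G,\omega)$ is required, consistent with the absence of that hypothesis in the statement.

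For part (ii), the isomorphism from (i), together with the fact that every $*$-representation factors through $C^*(G)$, yields a canonical bijection $\Prim(C^*(G)) \leftrightarrow \Prim_*(L^1(G)) \leftrightarrow \Prim_*(L^\Phi(G,\omega))$, and the induced maps of primitive ideal spaces are automatically continuous for the hull--kernel topologies. The whole content of $*$-regularity is therefore that the inverse maps are also continuous, i.e.\ that all three hull--kernel topologies coincide. I would first establish this for $L^1(G)$: writing $G = \bigcup_i K_i$, both $L^1(G)$ and $C^*(G)$ are organised by the compact open subgroups $K_i$, whose representation theory is completely reducible, and a functional-calculus argument of the same type used for the weakly-Wiener property (Theorem \ref{thm:3}) separates the primitive ideals and matches the two topologies. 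I would then transfer $*$-regularity to $L^\Phi(G,\omega)$: the hull--kernel topology on a space of primitive $*$-ideals is generated by the quotient-seminorm functions $P \mapsto \|f + P\|$ for $f$ ranging over the dense subalgebra $C_c(G)$, and Theorem \ref{thm:1} forces the relevant spectral radii, hence these seminorm functions, for $L^\Phi(G,\omega)$ and $L^1(G)$ to agree on $C_c(G)$. The two topologies on the common underlying set therefore coincide, and combined with the $L^1$ base case this gives $\Prim_*(L^\Phi(G,\omega)) \cong \Prim(C^*(G))$.

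The main obstacle is the genuinely topological step in (ii): upgrading the set-theoretic bijection to a homeomorphism of the full primitive ideal spaces. The spectral-radius identity of Theorem \ref{thm:1} is only directly available on the dense subalgebra $L^\Phi(G,\omega_1^\sharp)$, in particular on $C_c(G)$, so one must bootstrap from control of quotient seminorms on this dense subalgebra to uniform control across all primitive ideals simultaneously, and one must carry out the base case for $L^1(G)$ on the non-compactly-generated group $G$. These uniformity and base-case arguments, rather than the algebraic identifications, are where the real work lies.
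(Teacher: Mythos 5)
Your part (i) is correct, and it takes a genuinely different route from the paper's. The paper uses $\Phi\in\Delta_2$ to identify $L^\Phi(G,\omega)^*=L^\Psi(G,\omega^{-1})$ and then runs the integration argument of \cite[Section 2.4]{KMB12} to obtain the bijection between unitary representations of $G$ and non-degenerate $*$-representations of $L^\Phi(G,\omega)$; you obtain the same bijection more directly by combining Theorem \ref{thm:1} with the elementary facts $\norm{\pi(f)}^2=\nu\bigl(\pi(f^**f)\bigr)\le\nu_{L^\Phi(G,\omega)}(f^**f)$ to get $L^1$-contractivity of every $*$-representation on $C_c(G)$, and then extend by density. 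This is a clean alternative; the only slips are peripheral: a locally elliptic group need not be totally disconnected (e.g.\ $\mathbb{T}\times\mathbb{Q}_p$), so compact open subgroups need not shrink to the identity and your approximate identity should be indexed by compact neighbourhoods of the identity as in Proposition \ref{prop:Orc3}(ii), where it is bounded in $L^1(G,\omega)$ but not necessarily in $L^\Phi(G,\omega)$. Neither affects the argument.

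Part (ii), however, has a genuine gap, which you have in effect flagged yourself in your closing paragraph. Your transfer step rests on the claim that the hull--kernel topology on $\Prim_*$ of a Banach $*$-algebra is generated by the seminorm functions $P\mapsto\norm{f+P}$. That description is a $C^*$-algebra phenomenon: for a Banach $*$-algebra, the implication ``$\,\bigcap_{\pi\in C}\ker\pi\subseteq\ker\rho$ implies $\norm{\rho(f)}\le\sup_{\pi\in C}\norm{\pi(f)}$'' is precisely the non-trivial content of $*$-regularity, i.e.\ it is what you are trying to prove, and if it held automatically then every Banach $*$-algebra would be $*$-regular, which is false. Moreover, Theorem \ref{thm:1} is an equality of spectral radii, and spectral radii do not control quotient norms or operator norms of representations, so ``the spectral radii agree, hence the seminorm functions agree'' is a non sequitur. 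The paper closes exactly this gap in Proposition \ref{prop:reg} (modelled on \cite[Proposition 5.2]{DLMB04}) using the smooth functional calculus of Section \ref{sec:5}: if the norm estimate failed for some self-adjoint $g\in C_c(G)$ with $\norm{g}_{L^1(G)}\le 1$, one chooses $\varphi\in A_\gamma$ (Proposition \ref{prop:Agamma}) vanishing on a neighbourhood of $[-\sup_{\pi\in C}\norm{\pi(g)},\,\sup_{\pi\in C}\norm{\pi(g)}]$ with $\varphi(\norm{\rho(g)})=1$; then $\varphi\{g\}$ lies in $\bigcap_{\pi\in C}\ker_{L^\Phi(G,\omega)}(\pi)$ but not in $\ker_{L^\Phi(G,\omega)}(\rho)$, contradicting the kernel containment. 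That construction is the uniform ``bootstrap'' you correctly identify as necessary but do not supply. Likewise, your base case for $L^1(G)$ (``the representation theory of the $K_i$ is completely reducible \dots matches the two topologies'') is only a gesture; the paper instead quotes \cite[Satz 2]{BLSV78}, that every group of polynomial growth --- in particular every locally elliptic group --- is $*$-regular, and feeds this into Proposition \ref{prop:reg} to relate the $L^1$ and $C^*$ conditions.
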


This result could be useful in the study of the unitary representation theory of locally elliptic groups. Indeed, for example, it is currently an open question for which locally elliptic groups $G$ does the topology on $\Prim(C^*(G))$ satisfy the $T_1$ separation axiom. Sometimes $C^*(G)$ is not the most convenient algebra to work with: being a completion of $L^1(G)$, one cannot treat all elements of $C^*(G)$ as functions on $G$. Thus, it is often convenient to work with $L^1(G)$ instead. In the case of a locally elliptic group, as given by the above theorem, $\Prim_*(L^1(G))$ and $\Prim(C^*(G))$ are homeomorphic, so it is fine working with $L^1(G)$ instead of $C^*(G)$ in the study of the unitary representation theory. However, unlike $C^*(G)$, $L^1(G)$ may not be Hermitian, which is a desirable property to have. But even if $L^1(G)$ is not Hermitian, as is shown in the article, there always exist weights on $G$ such that $L^1(G,\omega)$ is Hermitian (and $\Prim_*(L^1(G,\omega)) \cong \Prim(C^*(G)))$. So in the context of non-Hermitian locally elliptic groups, working with a weighted $L^1$-algebra may be more natural algebra than working with $C^*(G)$ or $L^1(G)$ when studying the unitary representation theory of these groups.

In Section \ref{sec:7}, we show that every hull of a weighted Orlicz $*$-algebra contains a minimal ideal. This leads to interesting questions to pursue in the context of spectral synthesis, which we discuss in Section \ref{sec:8}.
In Section \ref{sec:8}, we also go through a variety of examples of locally elliptic groups and weighted algebras on these groups. Furthermore, we pose some open questions that could be investigated in future work.

\section{Preliminaries}\label{sec:2}

\subsection{Banach $*$-algebras and representation theory}

We begin the preliminaries section by collecting some notation, definitions and results on Banach algebras that will be used throughout the article. We assume the reader has some familiarity with (Banach) $*$-algebras, as can be found in \cite{Dix77,Mur90}, for example.

\begin{notation}
Let $\mathcal{A}$ be a Banach $*$-algebra and $x \in \mathcal{A}$. 
\begin{enumerate}[(i)]
   \item $\norm{x}_{\mathcal{A}}$ denotes the \hlight{norm} of $x$ in $\mathcal{A}$.\
   \item $\sigma_\mathcal{A}(x)$ denotes the \hlight{spectrum} of $x$ in $\mathcal{A}$.\
   \item $\nu_\mathcal{A}(x)$ denotes the \hlight{spectral radius} of $x$ in $\mathcal{A}$.
\end{enumerate}
\end{notation}

We now define some notation and terminology in regards to the representation theory of Banach $*$-algebras.

\begin{notation}
Let $\mathcal{A}$ be a Banach $*$-algebra.
\begin{enumerate}[(i)]
   \item A \hlight{unitary representation} of $\mathcal{A}$ is a pair $(\pi, \mathcal{H})$, where $\mathcal{H}$ is a Hilbert space, and $\pi: \mathcal{A} \rightarrow \mathcal{B}(\mathcal{H})$ is a non-degenerate $*$-homomorphism to the $C^*$-algebra $\BH$ of bounded operators on $\H$. \
   \item $\widehat{\mathcal{A}}$ denotes the set of all topologically irreducible unitary representations of $\mathcal{A}$ upto unitary equivalence equipped with the Fell topology.\
   \item $\Prim_*(\mathcal{A})$ denotes the space of kernels of topological irreducible unitary representations of $\mathcal{A}$ equipped with the hull-kernel topology. \
   \item $\Prim(\mathcal{A})$ denotes the space of annihilators of simple $\mathcal{A}$-modules equipped with the hull-kernel topology. \
\end{enumerate}
\end{notation}

We will now remind the reader of the definition of the hull-kernel topology and Fell topology: given an arbitrary subset $X \subseteq \mathcal{A}$, define $\hull_*(X) := \{ I \in \Prim_*(\mathcal{A}) : X \subseteq I \}$ (resp.\ $\hull(X) := \{ I \in \Prim(\mathcal{A}) : X \subseteq I \})$. Then, the \hlight{hull-kernel topology} on $\Prim_*(\mathcal{A})$ (resp.\ $\Prim(\mathcal{A})$) is the topology generated by defining the sets $X \subseteq \Prim_*(\mathcal{A})$ (resp.\ $X \subseteq \Prim(\mathcal{A})$)  satisfying $\hull_*(\ker(X)) = X$ (resp.\ $\hull(\ker(X)) = X$) to be closed, where $\ker(X) = \bigcap_{I \in X} I$. The Fell topology on $\widehat{\mathcal{A}}$ is precisely the pullback of the hull-kernel topology on $\Prim_*(\mathcal{A})$ via the canonical surjection $\widehat{\mathcal{A}} \twoheadrightarrow \Prim_*(\mathcal{A})$.

The notion of a $C^*$-enveloping algebra is critical to understanding the $*$-regularity property.

\begin{dfn}
Let $\mathcal{A}$ be a Banach $*$-algebra. 
\begin{enumerate}[(i)]
   \item The \hlight{maximal $C^*$-norm} on $\mathcal{A}$ is the norm defined on $x \in \mathcal{A}$ by 
   \begin{displaymath} \norm{x}_{\text{max}} := \sup \{ \norm{\pi(x)}_{\B(\H_\pi)} : \pi \in \widehat{\mathcal{A}} \}.  \end{displaymath}
   \item The \hlight{reducing ideal} of $\mathcal{A}$, denoted $\mathcal{A}_R$, is the $*$-ideal in $\mathcal{A}$ consisting of all elements $x \in \mathcal{A}$ with $\norm{x}_{\text{max}} = 0$.
   \item The \hlight{enveloping $C^*$-algebra} of $\mathcal{A}$, denoted $C^*(\mathcal{A})$, is the completion of $\mathcal{A}/\mathcal{A}_R$ with respect to the maximal $C^*$-norm.
\end{enumerate}
\end{dfn}

The following proposition is fundamental to understanding the Hermitian property for a Banach $*$-algebra. We recall that an element $x \in \mathcal{A}$ of a Banach $*$-algebra $\mathcal{A}$ is \hlight{self-adjoint} if $x = x^*$.

\begin{prop}\label{prop:sym}\cite{Lep76b}
Let $\mathcal{A}$ be a Banach $*$-algebra. The following are equivalent:
\begin{enumerate}[(i)]
   \item For all $x \in \mathcal{A}$, $\sigma_\mathcal{A}(x^*x) \subseteq \mathbb{R}_{\ge 0}$; \
   \item For all self-adjoint $x \in \mathcal{A}$, $\sigma_\mathcal{A}(x) \subseteq \mathbb{R}$; \
   \item $\Prim(\mathcal{A}) \subseteq \Prim_*(\mathcal{A})$ i.e.\ every simple $\mathcal{A}$-module is unitarizable.
\end{enumerate}
\end{prop}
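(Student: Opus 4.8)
The plan is to prove the cycle by treating the easy implication first, isolating the Shirali--Ford theorem as the analytic heart of the $(\mathrm{i})\Leftrightarrow(\mathrm{ii})$ block, and then bridging to the representation-theoretic condition $(\mathrm{iii})$ through positive functionals and the enveloping $C^*$-algebra. Throughout I would pass to the unitization $\mathcal{A}_{+}$ when $\mathcal{A}$ is non-unital, since spectrum, self-adjointness and positivity are defined there and adjoining a unit affects none of the three conditions. The implication $(\mathrm{i})\Rightarrow(\mathrm{ii})$ is then immediate: if $x=x^{*}$ then $x^{*}x=x^{2}$, so the spectral mapping theorem for the polynomial $t\mapsto t^{2}$ (valid in any Banach algebra) gives $\sigma_{\mathcal{A}}(x)^{2}=\sigma_{\mathcal{A}}(x^{2})=\sigma_{\mathcal{A}}(x^{*}x)\subseteq\mathbb{R}_{\ge 0}$, and a complex number whose square is a nonnegative real must itself be real, whence $\sigma_{\mathcal{A}}(x)\subseteq\mathbb{R}$.

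The reverse implication $(\mathrm{ii})\Rightarrow(\mathrm{i})$ is the substantive point, and I expect it to be the main obstacle; it is exactly the Shirali--Ford theorem. I would prove it following Pt\'ak's method, introducing the Pt\'ak function $|x|:=\nu_{\mathcal{A}}(x^{*}x)^{1/2}$. First I would establish the inequality $\nu_{\mathcal{A}}(x)\le|x|$, which holds in any Banach $*$-algebra and follows from the subharmonicity of $\lambda\mapsto\log\nu_{\mathcal{A}}\bigl((x-\lambda)^{*}(x-\lambda)\bigr)$ along analytic families (Vesentini). The crux is then to show that hypothesis $(\mathrm{ii})$ upgrades $|\cdot|$ to a \emph{submultiplicative} $*$-seminorm; since the power-multiplicativity of $\nu_{\mathcal{A}}$ already forces $|x^{*}x|=|x|^{2}$, this makes $|\cdot|$ a $C^{*}$-seminorm, so the completion of $\mathcal{A}_{+}$ in $|\cdot|$ is a $C^{*}$-algebra $B$ with a canonical $*$-homomorphism $\phi\colon\mathcal{A}_{+}\to B$. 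To finish I would compare spectra: for self-adjoint $h=x^{*}x$ one has $\sigma_{B}(\phi(h))\subseteq[0,\infty)$ from the $C^{*}$-identity, and the inequality $\nu_{\mathcal{A}}\le|\cdot|$ together with $(\mathrm{ii})$ forces the embedding $\phi$ to be spectrum-preserving on self-adjoint elements, yielding $\sigma_{\mathcal{A}}(x^{*}x)\subseteq[0,\infty)$. The delicate step is the submultiplicativity of $|\cdot|$, where Hermitianness is used essentially.

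For the equivalence of $(\mathrm{iii})$ with $(\mathrm{i})$--$(\mathrm{ii})$ I would argue via positive functionals and the GNS construction. For $(\mathrm{i})\Rightarrow(\mathrm{iii})$: given a simple $\mathcal{A}$-module $M$ with annihilator $P\in\Prim(\mathcal{A})$, I would produce a pure state $f$ whose GNS representation $\pi_{f}$ is topologically irreducible with $\ker\pi_{f}=P$; here symmetry is precisely what guarantees that the relevant functionals are positive and automatically controlled by $\nu_{\mathcal{A}}$, so that enough such states exist to exhaust the primitive ideals, giving $P\in\Prim_{*}(\mathcal{A})$. For the converse I would take self-adjoint $x$ with a hypothetical $\lambda\notin\mathbb{R}$ in $\sigma_{\mathcal{A}}(x)$: then $x-\lambda$ fails to be (say) left invertible, hence lies in a maximal modular left ideal, producing a simple module with annihilator $P$ on which $\lambda$ is an eigenvalue of $x$, so that $\lambda\in\sigma_{\mathcal{A}/P}(x+P)$. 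By $(\mathrm{iii})$, $P=\ker\pi$ for a topologically irreducible $*$-representation on some $\mathcal{H}$, in which $\pi(x)\in\mathcal{B}(\mathcal{H})$ is self-adjoint and hence has real spectrum.

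The secondary obstacle lives in this last step: a $*$-homomorphism only shrinks spectra, so reality of $\sigma\bigl(\pi(x)\bigr)$ does not by itself contradict $\lambda\in\sigma_{\mathcal{A}/P}(x+P)$. I would therefore not argue $(\mathrm{iii})\Rightarrow(\mathrm{ii})$ by this naive spectral transfer, but instead route it through the enveloping $C^{*}$-algebra: if every primitive ideal is a $*$-kernel, the family of topologically irreducible $*$-representations is separating on the primitive quotients, and a global comparison of the Pt\'ak function with the $C^{*}$-norms they induce forces the spectrum of each self-adjoint element to be detected faithfully on Hilbert space, hence real. In effect this direction feeds back into the spectrum-preservation mechanism of Shirali--Ford, which is why I regard $(\mathrm{ii})\Rightarrow(\mathrm{i})$ as the single hardest ingredient underlying the whole equivalence.
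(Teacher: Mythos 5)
Your proposal must stand entirely on its own merits here: the paper offers \emph{no} proof of this proposition, quoting it directly from Leptin \cite{Lep76b}. Judged on those merits, the architecture (Pt\'ak function for (i)$\Leftrightarrow$(ii), positive functionals/GNS for (iii)) is the classical one, but two of the three blocks contain genuine gaps.

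In (ii)$\Rightarrow$(i) you assert that $\nu_{\mathcal{A}}(x)\le |x|:=\nu_{\mathcal{A}}(x^*x)^{1/2}$ ``holds in any Banach $*$-algebra''. This is false: Pt\'ak's inequality is \emph{equivalent} to Hermitianness, not automatic. A two-dimensional counterexample is $\mathcal{A}=\mathbb{C}^2$ with coordinatewise product, max norm, and involution $(a,b)^*:=(\overline{b},\overline{a})$; then $x=(1,0)$ satisfies $x^*x=0$, so $|x|=0<1=\nu_{\mathcal{A}}(x)$. Your dependency order is therefore inverted: in the standard development, hypothesis (ii) is used precisely to \emph{prove} the inequality (and note that the family $\lambda\mapsto(x-\lambda)^*(x-\lambda)$ you propose to feed into Vesentini's theorem is not even analytic in $\lambda$, since $(x-\lambda)^*$ involves $\overline{\lambda}$), and submultiplicativity of $|\cdot|$ is then derived from the inequality, not the other way around. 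Your concluding spectral comparison is repairable (for self-adjoint $h$ one has $|h|=\nu_{\mathcal{A}}(h)$ trivially, and comparing $\nu_{\mathcal{A}}(h-s)$ with $\nu_B(\phi(h)-s)$ as $s\to\pm\infty$ pins down $\min\sigma_{\mathcal{A}}(h)$), but as written the ``analytic heart'' of your argument rests on a false lemma.

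The equivalence with (iii) has the more serious problems. For (iii)$\Rightarrow$(ii) you correctly diagnose that a $*$-homomorphism only shrinks spectra, but your proposed detour through the enveloping $C^*$-algebra is not an argument: ``separating on the primitive quotients'' is just injectivity of the induced maps $\mathcal{A}/P\to\mathcal{B}(\mathcal{H})$, which is exactly the property that fails to control spectra, and any ``global comparison with the Pt\'ak function'' presupposes Hermitian-type inequalities, i.e.\ begs the question. The standard, elementary fix is to work with \emph{eigenvectors} rather than spectra: if $x=x^*$ and $\lambda\in\sigma_{\mathcal{A}}(x)$, then $x-\lambda$ (or else $x-\overline{\lambda}$, applying the involution to exchange left and right invertibility) fails to be left invertible in the unitization, hence lies in a maximal modular left ideal $L$; so for $\lambda\neq 0$, $\lambda$ is an honest eigenvalue of $x$ acting on the simple module $\mathcal{A}_+/L$, with eigenvector the coset of the unit. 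Unitarizing this module via (iii) makes $\pi(x)$ a bounded symmetric operator with the \emph{same} eigenvector, and eigenvalues of symmetric operators are real. Eigenvalues, unlike spectra, survive the passage to the Hilbert space completion; this is the idea your sketch is missing. Finally, for (i)$\Rightarrow$(iii) your text merely restates the goal: the entire content of Leptin's theorem is the construction, from symmetry, of a positive functional (equivalently an invariant inner product) on the given simple module whose associated $*$-representation has kernel exactly $P$, and you provide no mechanism whatsoever for producing it.
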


We now define the Hermitian, Wiener and $*$-regularity properties that this article focuses on studying.

\begin{dfn}
Let $\mathcal{A}$ be a Banach $*$-algebra. 
\begin{enumerate}[(i)]
   \item $\mathcal{A}$ is called \hlight{Hermitian} if one of the equivalent conditions of Proposition \ref{prop:sym} hold. \
   \item $\mathcal{A}$ is called \hlight{Wiener} if for every proper closed two-sided ideal $I \subseteq \mathcal{A}$, there exists $J \in \Prim_*(\mathcal{A})$ such that $I \subseteq J$.
   \item $\mathcal{A}$ is called \hlight{weakly-Wiener} if for every proper closed two-sided ideal $I \subseteq \mathcal{A}$, there exists $J \in \Prim(\mathcal{A})$ such that $I \subseteq J$.
   \item $\mathcal{A}$ is called \hlight{$*$-regular} if $\Prim_*(\mathcal{A})$ and $\Prim(C^*(\mathcal{A})) = \Prim_*(C^*(\mathcal{A}))$ are homeomorphic.
\end{enumerate}
\end{dfn}

\subsection{Orlicz spaces}\label{sec:Orc}

In this subsection we will introduce basic concepts concerning Orlicz spaces and Orlicz $*$-algebras, and give some results that will be used throughout the article. We primarily follow the references \cite{RR91,OO15,OS17} and the reader can consult these for further details on the topic.

\begin{dfn}
A \hlight{Young function} is a function $\Phi:\mathbb{R} \rightarrow [0,\infty]$ which satisfies the following conditions:
\begin{enumerate}[(i)]
   \item $\Phi$ is convex;\
   \item $\Phi$ is even;\
   \item $\Phi(0)=0$;\
   \item $\lim_{x \rightarrow \infty} \Phi(x) = + \infty$.
\end{enumerate}

One associates to the Young function $\Phi$ its \hlight{complementary Young function} defined by
\begin{displaymath}  \Psi(y) := \sup\{x\lvert y \rvert - \Phi(x) : x \ge 0 \}. \end{displaymath}
It can be checked that $\Psi$ is also a Young function and the complementary Young function to $\Psi$ is $\Phi$. We call $(\Phi,\Psi)$ a \hlight{complementary pair} of Young functions. 
\end{dfn}

A complementary pair of Young functions $(\Phi,\Psi)$ satisfies Young's inequality:
\begin{displaymath} xy \le \Phi(x) + \Psi(y) \; \; \; \; \forall x,y \in \mathbb{R}. \end{displaymath}

The following notion of the $\Delta_2$-condition for a Young function will be important later. We will mention later some consequences of this condition.

\begin{dfn}\label{dfn:delta2}
Let $\Phi$ be a Young function. We say that $\Phi$ satisfies the \hlight{$\Delta_2$-condition}, or $\Phi \in \Delta_2$, if there exists a constant $C > 0$ such that 
\begin{displaymath} \Phi(2x) \le C \Phi(x) \end{displaymath}
for all $x \ge 0$.
\end{dfn}

We now define the Orlicz space associated to a Young function. For the remainder of this subsection, unless otherwise stated, $G$ will be a locally compact group and all integration is performed against some prior fixed left-Haar measure on $G$.

\begin{dfn}
Let $(\Phi,\Psi)$ be a complementary pair of Young functions. The \hlight{Orlicz space} on $G$ associated to $\Phi$ is the space
\begin{displaymath} L^\Phi(G) := \bigg\{ f:G \rightarrow \mathbb{C}\text{ measurable} : \int_G \Phi(\alpha\lvert f \rvert) \: dx < \infty \text{ for some }\alpha > 0 \bigg\}. \end{displaymath}
The space $L^\Phi(G)$ is equipped with the \hlight{Orlicz norm}
\begin{displaymath} \norm{f}_{L^\Phi(G)} := \sup\bigg\{ \int_G \lvert f(x)g(x) \rvert \: dx : \int_G \Psi(\vert g(x) \rvert) \: dx \le 1 \bigg\} \; \; \; \; (f \in L^\Phi(G))   \end{displaymath}
with respect to which $L^\Phi(G)$ becomes a Banach space. The Orlicz norm is equivalent to the \hlight{Luxemburg norm} which is defined as 
\begin{displaymath} N_\Phi(f) := \inf\bigg\{ k > 0 : \int_G \Phi(\lvert f(x) \rvert / k) \: dx \le 1 \bigg\} \; \; \; \; (f \in L^\Phi(G)).  \end{displaymath} 
\end{dfn}

We note that in the case that $\Phi(x) = \lvert x \rvert^p/p$ ($p \in (1,\infty)$), the complementary Young function to $\Phi$ is $\Psi =  \lvert x \rvert^q/q$ where $1/p + 1/q=1$, and these Young functions satisfy the $\Delta_2$ condition. Furthermore, in this case, $L^\Phi(G)$ becomes the classical $L^p$-space denoted $L^p(G)$, and the Orlicz norm with respect to the Young function $\Phi$ is equivalent to the usual $L^p$-norm on $L^p(G)$.

The following provides some other examples of Young functions, some of which arise in physics and probablilty theory.

\begin{exm}\cite{ML14}
The following are Young functions:
\begin{enumerate}[(i)]
   \item $\Phi = \lvert x \rvert^p/p$ ($p \in [1,\infty)$);\
   \item $\Phi(x) = e^{\lvert x \rvert} - \lvert x \rvert -1$; \
   \item $\Phi(x) = \cosh(x) -1$; \
   \item $\Phi(x) = x\log(1+x)$.
\end{enumerate}
\end{exm}

We now move on to studying weights and weighted Orlicz spaces.

\begin{dfn}\label{dfn:weight}
A \hlight{weight} $\omega$ on a locally compact group $G$ is a measurable function $\omega: G \rightarrow [1,\infty)$ that satisfies the following properties:
\begin{enumerate}[(i)]
   \item $\omega$ is bounded on compact sets; \
   \item $\omega$ is \hlight{sub-multiplicative} i.e.\ $\omega(xy) \le \omega(x)\omega(y)$ for all $x,y \in G$;\
   \item $\omega$ is \hlight{symmetric} i.e.\ $\omega(x) = \omega(x^{-1})$ for all $x \in G$.
\end{enumerate}
Two weights $\omega$ and $\omega'$ on $G$ are \hlight{equivalent} if there exists constants $C$ and $C'$ such that $C \omega(x) \le \omega'(x) \le C'\omega(x)$ for all $x \in G$.
\end{dfn}

The notion of a sub-additive weight will be used throughout the article. A sub-additive weight is also often referred to as a polynomial weight, which was defined by Pytlik in \cite{Pyt82}.

\begin{dfn}
Let $\omega$ a weight on $G$. The weight $\omega$ is called \hlight{sub-additive} if there exists a constant $C >0$ such that for all $x,y \in G$
\begin{displaymath} \omega(xy) \le C(\omega(x) + \omega(y)). \end{displaymath}
\end{dfn}

We now define the notion of a weighted Orlicz space.

\begin{dfn}
Let $\Phi$ be a Young function and $\omega$ a weight on $G$. The \hlight{weighted Orlicz space} on $G$ corresponding to $\Phi$ and $\omega$ is the space
\begin{displaymath} L^\Phi(G,\omega) := \{ f \in L^\Phi(G) : \norm{f\omega}_{L^\Phi(G)} < \infty \} \end{displaymath}
which becomes a Banach space when equipped with the norm 
\begin{displaymath} \norm{f}_{L^\Phi(G,\omega)} := \norm{f\omega}_{L^\Phi(G)} \; \; \; \; (f \in L^\Phi(G,\omega)). \end{displaymath}
\end{dfn}

We note that if we have two equivalent weights $\omega$ and $\omega'$ on $G$, then the corresponding weighted Orlicz spaces $L^\Phi(G,\omega)$ and $L^\Phi(G,\omega')$ are isomorphic as Banach spaces. Also, if $\Phi \in \Delta_2$, then one can show that the dual of $L^\Phi(G,\omega)$ is $L^\Phi(G,\omega)^* = L^\Psi(G,\omega^{-1})$, where $\Psi$ is the complementary Young function to $\Phi$. In particular, if $\Psi$ is also $\Delta_2$, then $L^\Phi(G,\omega)$ is a reflexive Banach space.

The following elementary fact will be used throughout the article.

\begin{prop}\label{prop:Orc1}
Let $G$ be a locally compact group, $(\Phi,\Psi)$ a complementary pair of Young functions, and $\omega$ a weight on $G$. If $1/\omega \in L^\Psi(G)$ then $L^\Phi(G,\omega) \subseteq L^1(G)$.
\end{prop}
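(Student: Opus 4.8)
The plan is to factor an arbitrary element of $L^\Phi(G,\omega)$ as a product of an $L^\Phi$-function with the $L^\Psi$-function $1/\omega$, and then invoke the Hölder inequality for Orlicz spaces to obtain integrability together with a norm bound.

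First I would take $f \in L^\Phi(G,\omega)$, which by definition means that $f\omega \in L^\Phi(G)$ and $\norm{f}_{L^\Phi(G,\omega)} = \norm{f\omega}_{L^\Phi(G)} < \infty$. Since $\omega(x) \ge 1$ for every $x \in G$, the function $1/\omega$ is a well-defined measurable function with values in $(0,1]$, and by hypothesis $1/\omega \in L^\Psi(G)$. The key elementary observation is then the pointwise factorization $f = (f\omega)\cdot(1/\omega)$ almost everywhere, which splits $f$ into a factor in $L^\Phi(G)$ and a factor in $L^\Psi(G)$, matching the complementary pair $(\Phi,\Psi)$.

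Next I would apply the Hölder inequality for Orlicz spaces \cite[Section 3.3]{RR91}: for $u \in L^\Phi(G)$ and $v \in L^\Psi(G)$ one has $uv \in L^1(G)$ with $\int_G |uv| \, dx \le \norm{u}_{L^\Phi(G)} N_\Psi(v)$, where $N_\Psi$ is the Luxemburg norm. Taking $u = f\omega$ and $v = 1/\omega$ yields
\begin{displaymath} \int_G |f| \, dx = \int_G \lvert (f\omega)(1/\omega) \rvert \, dx \le \norm{f\omega}_{L^\Phi(G)} \, N_\Psi(1/\omega) = \norm{f}_{L^\Phi(G,\omega)} \, N_\Psi(1/\omega) < \infty, \end{displaymath}
so that $f \in L^1(G)$ and hence $L^\Phi(G,\omega) \subseteq L^1(G)$.

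There is no substantive obstacle here: the result is an immediate consequence of the Orlicz Hölder inequality once the correct factorization is identified, and the only hypotheses used are $\omega \ge 1$ (so that $1/\omega$ is well defined and bounded) and $1/\omega \in L^\Psi(G)$ (which is exactly what is assumed). As a bonus, the displayed estimate shows more than the set-theoretic inclusion: the embedding $L^\Phi(G,\omega) \hookrightarrow L^1(G)$ is continuous, with operator norm at most $N_\Psi(1/\omega)$.
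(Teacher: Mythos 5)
Your proof is correct and is exactly the argument the paper intends: the paper's own proof is a one-line appeal to the H\"older inequality for Orlicz spaces \cite[Section 3.3]{RR91}, and your factorization $f = (f\omega)\cdot(1/\omega)$ with $f\omega \in L^\Phi(G)$ and $1/\omega \in L^\Psi(G)$ simply fills in the details of that appeal. The added observation that the embedding $L^\Phi(G,\omega) \hookrightarrow L^1(G)$ is continuous with norm at most $N_\Psi(1/\omega)$ is a nice bonus, and is in fact the quantitative form used implicitly later in the paper (e.g.\ in the proof of Theorem \ref{thm:1}).
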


\begin{proof}
The proof follows directly from the Hölder inequality for Orlicz spaces \cite[Section 3.3]{RR91}.
\end{proof}

The following result is critical to our article.

\begin{prop}\label{prop:Orc2}\cite[Theorem 4.5]{OS17}
Let $G$ be a unimodular locally compact group, $(\Phi,\Psi)$ a complementary pair of Young functions and $\omega$ a weight on $G$. If $\omega$ is sub-additive and $1/\omega \in L^\Psi(G)$, then $L^\Phi(G,\omega) \subseteq L^1(G)$ is a Banach $*$-algebra when equipped with the convolution product and the involution $f^*(x) := \overline{f(x^{-1})}$ ($f\in L^\Phi(G,\omega)$). Moreover, the following hold:
\begin{enumerate}[(i)]
   \item There exists a constant $C > 0$ such that for all $f, g \in L^\Phi(G,\omega)$
   \begin{displaymath} \norm{f*g}_{L^\Phi(G,\omega)} \le C(\norm{f}_{L^1(G)}\norm{g}_{L^\Phi(G,\omega)} + \norm{f}_{L^\Phi(G,\omega)}\norm{g}_{L^1(G)}); \end{displaymath}
   \item If $L^1(G)$ is Hermitian, then $L^\Phi(G,\omega)$ is Hermitian.
\end{enumerate}
\end{prop}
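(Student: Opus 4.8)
The plan is to realise $L^\Phi(G,\omega)$ as a continuously and densely embedded $*$-subalgebra of $L^1(G)$, and to extract everything from the single sub-additivity estimate $\omega(xy)\le C(\omega(x)+\omega(y))$ together with the standard Young-type convolution inequality for Orlicz spaces, namely $\norm{a*b}_{L^\Phi(G)}\le \norm{a}_{L^1(G)}\norm{b}_{L^\Phi(G)}$, which holds on unimodular groups and makes $L^\Phi(G)$ an $L^1(G)$-bimodule (see \cite{RR91}). Note that $C_c(G)\subseteq L^\Phi(G,\omega)$ is dense in $L^1(G)$, so the embedding has dense range, and Proposition~\ref{prop:Orc1} in its quantitative form gives a bound $\norm{f}_{L^1(G)}\le C_0\norm{f}_{L^\Phi(G,\omega)}$.

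First I would prove the algebra structure and inequality (i) at once. Writing $x=y\cdot(y^{-1}x)$ and applying sub-additivity to $\omega(x)$ inside the convolution integral yields the pointwise domination
\[
\lvert (f*g)(x)\rvert\,\omega(x)\ \le\ C\big((\lvert f\rvert\omega)*\lvert g\rvert\big)(x)+C\big(\lvert f\rvert*(\lvert g\rvert\omega)\big)(x).
\]
Taking Orlicz norms, using monotonicity of $\norm{\cdot}_{L^\Phi(G)}$ and the Young inequality above in both of its symmetric forms, produces exactly
\[
\norm{f*g}_{L^\Phi(G,\omega)}\ \le\ C\big(\norm{f}_{L^1(G)}\norm{g}_{L^\Phi(G,\omega)}+\norm{f}_{L^\Phi(G,\omega)}\norm{g}_{L^1(G)}\big),
\]
which is statement (i). Combined with $\norm{\cdot}_{L^1(G)}\le C_0\norm{\cdot}_{L^\Phi(G,\omega)}$, this gives submultiplicativity up to a constant, so after passing to an equivalent norm $L^\Phi(G,\omega)$ is a Banach algebra; unimodularity together with symmetry of $\omega$ makes $f\mapsto f^*$ an isometric anti-linear anti-automorphism, completing the Banach $*$-algebra claim.

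For part (ii) the strategy is to upgrade (i) to an equality of spectral radii and then to spectral invariance. Fixing any $f\in L^\Phi(G,\omega)$ and setting $a_n:=\norm{f^n}_{L^\Phi(G,\omega)}$, $b_n:=\norm{f^n}_{L^1(G)}$, inequality (i) applied to $f^m*f^n$ gives $a_{m+n}\le C(b_m a_n+a_m b_n)$. For any $s>\nu_{L^1(G)}(f)$ we have $b_k\le M s^k$ for some $M$, so normalising by $s^{m+n}$ turns this into the subadditive-type estimate $a_{m+n}/s^{m+n}\le CM\,(a_m/s^m+a_n/s^n)$. Iterating along $n=2^k$ yields $a_{2^k}\le (2CM)^k a_1 s^{2^k}$, hence $a_{2^k}^{1/2^k}\to s$; since $\lim_n a_n^{1/n}$ exists (Fekete), this forces $\nu_{L^\Phi(G,\omega)}(f)\le s$, and letting $s\downarrow\nu_{L^1(G)}(f)$ together with the trivial inclusion $\sigma_{L^1(G)}(f)\subseteq\sigma_{L^\Phi(G,\omega)}(f)$ gives $\nu_{L^\Phi(G,\omega)}(f)=\nu_{L^1(G)}(f)$ for every $f$.

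Finally I would convert this spectral-radius equality into $\sigma_{L^\Phi(G,\omega)}(f)=\sigma_{L^1(G)}(f)$ by a Hulanicki-type density argument phrased via quasi-inverses (to sidestep unitization subtleties): if $c/z$ has a quasi-inverse $d$ in $L^1(G)$, approximate $d$ by $a\in L^\Phi(G,\omega)$ so that $e:=c/z+a-(c/z)a$ has small $L^1(G)$-norm; then $\nu_{L^\Phi(G,\omega)}(e)=\nu_{L^1(G)}(e)\le\norm{e}_{L^1(G)}<1$, so $e$ is quasi-invertible in $L^\Phi(G,\omega)$, and factoring $1-e=(1-c/z)(1-a)$ forces $c/z$ to be quasi-invertible in $L^\Phi(G,\omega)$. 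With spectral invariance in hand, the Hermitianness of $L^1(G)$ passes to $L^\Phi(G,\omega)$ through Proposition~\ref{prop:sym}: a self-adjoint $f$ is self-adjoint in both algebras, $\sigma_{L^1(G)}(f)\subseteq\mathbb{R}$, hence $\sigma_{L^\Phi(G,\omega)}(f)\subseteq\mathbb{R}$. The main obstacle is precisely this passage from equal spectral radii to equal spectra, which requires careful bookkeeping with the (quasi-)inverses and the density approximation; the convolution estimate (i) is exactly what is engineered to make this mechanism run.
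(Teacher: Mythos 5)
The paper does not actually prove this proposition: it is imported verbatim as \cite[Theorem 4.5]{OS17}, so your proposal can only be compared with the cited source and with the techniques the paper uses elsewhere. Your argument for the algebra structure and for (i) is the standard one (essentially that of the cited source): write $\omega(x)\le C(\omega(y)+\omega(y^{-1}x))$ inside the convolution integral, dominate $\lvert f*g\rvert\,\omega$ pointwise by $C\big((\lvert f\rvert\omega)*\lvert g\rvert+\lvert f\rvert*(\lvert g\rvert\omega)\big)$, and apply the two one-sided Young inequalities $\norm{u*v}_{L^\Phi(G)}\le\norm{u}_{L^1(G)}\norm{v}_{L^\Phi(G)}$ and $\norm{u*v}_{L^\Phi(G)}\le\norm{u}_{L^\Phi(G)}\norm{v}_{L^1(G)}$ (the second is where unimodularity enters), with symmetry of $\omega$ and inversion-invariance of Haar measure giving the isometric involution. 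For (ii), your route --- the iteration $a_{m+n}\le C(b_ma_n+a_mb_n)$ along powers of $2$ to get $\nu_{L^\Phi(G,\omega)}=\nu_{L^1(G)}$ on all of $L^\Phi(G,\omega)$, then a Hulanicki-type density argument with quasi-inverses --- is sound, and it is in fact the same mechanism the paper itself deploys later: the proof of Theorem \ref{thm:1} is your doubling iteration almost verbatim, and Theorem \ref{thm:2} then invokes Lemma \ref{lem:spec} rather than doing the quasi-inverse approximation by hand. What your self-contained version buys is independence from \cite{FGL06}; what the citations buy is brevity.

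Two points need attention. The only genuine (though easily fixed) gap: from $1-e=(1-c/z)(1-a)$ with $1-e$ invertible you obtain only a \emph{right} inverse for $1-c/z$, and a right-invertible element of a Banach algebra need not be invertible. You must also run the symmetric estimate on $e':=a\circ(c/z)$, which is likewise small in $L^1(G)$-norm because $d$ is a two-sided quasi-inverse; then $1-e'=(1-a)(1-c/z)$ is invertible, giving a left inverse, and the two together yield quasi-invertibility of $c/z$. Separately, a streamlining remark: for Hermitianness you do not need full spectral invariance at all. Once $\nu_{L^\Phi(G,\omega)}(f)=\nu_{L^1(G)}(f)$ for all $f$, Lemma \ref{lem:spec} (with $\mathcal{B}=L^1(G)$ Hermitian) gives $\sigma_{L^\Phi(G,\omega)}(f)=\sigma_{L^1(G)}(f)$ directly; alternatively, for self-adjoint $f$ one has $\partial\sigma_{L^\Phi(G,\omega)}(f)\subseteq\sigma_{L^1(G)}(f)\subseteq\mathbb{R}$, which already forces $\sigma_{L^\Phi(G,\omega)}(f)\subseteq\mathbb{R}$, since a compact subset of $\mathbb{C}$ whose boundary is real must itself be real.
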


It can also be the case that $L^\Phi(G,\omega)$ becomes a Banach $*$-algebra under convolution even when $\omega$ is not sub-additive. We make the following definition for ease of terminology throughout this paper.

\begin{dfn}
Let $\Phi$ be a Young function and $\omega$ a weight on a unimodular locally compact group $G$. We call $\omega$ an \hlight{$L^\Phi$-weight} if $L^\Phi(G,\omega)$ is a Banach $*$-algebra when equipped with the convolution product and the involution $f^*(x) = \overline{f(x^{-1})}$.
\end{dfn}


To finish the section on Orlicz spaces, we state some standard facts about Orlicz algebras that will be used in the article. The proofs can be found in the papers \cite{OO15,OS17}.

\begin{prop}\label{prop:Orc3}\cite{OO15,OS17}
Let $G$ be a unimodular locally compact group with left Haar measure $\mu$, $(\Phi,\Psi)$ a complementary pair of Young functions and $\omega$ a $L^\Phi$-weight on $G$. The following hold:
\begin{enumerate}[(i)]
  \item For $x \in G$, let $L_x$ be the operator on $L^\Phi(G,\omega)$ defined by $L_x(f)(y) := f(x^{-1}y)$ for $f \in L^\Phi(G,\omega)$. Then, $\norm{L_xf}_{L^\Phi(G,\omega)} \le \omega(x) \norm{f}_{L^\Phi(G,\omega)}$ for all $f \in L^\Phi(G,\omega)$.
  \item Let $\mathcal{N}$ be the set of compact neighbourhoods of the identity in $G$. Then, $(\chi_K/\mu(K))_{K \in \mathcal{N}}$, where $\chi_K$ denotes the characteristic function on the set $K$, is an approximate identity in $L^\Phi(G,\omega)$. It is a bounded approximate identity in $L^1(G,\omega)$.
  \item The space $L^\Phi(G,\omega)$ is a $L^1(G,\omega)$-module with respect to convolution. For $f \in L^1(G,\omega)$ and $g \in L^\Phi(G,\omega)$, we have that
  \begin{displaymath} \norm{f*g}_{L^\Phi(G,\omega)} \le \norm{f}_{L^1(G,\omega)} \norm{g}_{L^\Phi(G,\omega)}.  \end{displaymath}
\end{enumerate}
\end{prop}

%

\subsection{Locally elliptic locally compact groups}

First we define the notion of a locally elliptic group.

\begin{dfn}
Let $G$ be a locally compact group. The group $G$ is called \hlight{locally elliptic} if every compact subset of $G$ generates a relatively compact subgroup.
\end{dfn}

Of course, if $G$ is discrete, then locally elliptic is synonymous with the term locally finite. One also checks easily that every locally elliptic group is unimodular.

The following result, which is a consequence of work of Platonov, gives an equivalent characterisation of locally elliptic groups.

\begin{thm}\cite{Pla66}
Let $G$ be a locally compact group. Then $G$ is locally elliptic if and only if it is a countable increasing union of compact open subgroups.
\end{thm}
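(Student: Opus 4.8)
The plan is to prove both directions of the characterisation, and the non-trivial direction requires producing compact open subgroups out of the abstract local ellipticity hypothesis.

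First I would dispose of the easy direction. If $G = \bigcup_{i=1}^\infty K_i$ is a countable increasing union of compact open subgroups, then any compact subset $C \subseteq G$ is covered by the open sets $(K_i)_{i \in \mathbb{N}}$, so by compactness $C$ is contained in finitely many of them, hence in a single $K_N$ since the union is increasing. Then the subgroup generated by $C$ lies in $K_N$, which is compact, so $\langle C \rangle$ is relatively compact. This shows $G$ is locally elliptic.

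For the converse, the structural input I would invoke is the theorem of Platonov cited immediately before the statement (\cite{Pla66}): every compact subset generating a relatively compact subgroup. The key point I would extract is that a locally elliptic group must be \emph{almost connected or, more to the point, totally disconnected after passing to the relevant structure}; concretely, since $G$ is unimodular and locally elliptic, a compactly generated open subgroup is relatively compact, and van Dantzig's theorem supplies a compact open subgroup $U$ of $G$ (the identity component being trivial or handled by Platonov's reduction, locally elliptic groups are totally disconnected). Given a compact open $U$, I would then build the ascending chain: enumerate a countable dense-in-the-right-sense family, or more directly, since $G$ is $\sigma$-compact (which itself needs justification, but follows because $G/U$ is discrete and the claim reduces to countability of $G/U$), write $G = \bigcup_{i} C_i$ with $C_i$ compact and increasing, and set $K_i := \langle U \cup C_i \rangle^{-}$, the closure of the subgroup generated. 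Each $K_i$ is a relatively compact subgroup containing the open set $U$, hence open, and its closure is compact; the chain is increasing and exhausts $G$.

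The step I expect to be the main obstacle is establishing that $G$ is a countable (as opposed to merely a directed) increasing union, i.e.\ producing a genuine \emph{sequence} $K_1 \subseteq K_2 \subseteq \cdots$ rather than an uncountable net. This is where one must be careful: one needs $\sigma$-compactness of $G$, which is not automatic for an arbitrary locally compact group and is precisely the content of Platonov's contribution. I would therefore lean on \cite{Pla66} for the reduction guaranteeing $G$ is $\sigma$-compact (equivalently, that the discrete quotient $G/U$ is countable), and then the construction of the chain via $K_i = \overline{\langle U \cup C_i\rangle}$ becomes routine, with openness following from each $K_i \supseteq U$ and compactness from local ellipticity applied to the compact set $U \cup C_i$.
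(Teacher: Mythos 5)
The paper offers no proof of this statement --- it is quoted as a theorem of Platonov \cite{Pla66} --- so your proposal must stand on its own; it has one repairable flaw and one genuine gap. The repairable flaw: your claim that locally elliptic groups are totally disconnected is false. Any compact connected group (e.g.\ the circle) is locally elliptic, and in general the identity component of a locally elliptic group is compact, not trivial. Fortunately van Dantzig is not needed at all: if $V$ is a compact neighbourhood of the identity, then $\langle V \rangle$ is an open subgroup of $G$ which is relatively compact by local ellipticity, so $\overline{\langle V \rangle}$ is a compact open subgroup. With such a $U$ in hand, your construction $K_i := \overline{\langle U \cup C_i \rangle}$ is correct: each $K_i$ is open (it contains $U$), compact (local ellipticity applied to the compact set $U \cup C_i$, plus closedness), and the chain exhausts $G$.

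The genuine gap is exactly the step you flagged: $\sigma$-compactness. You propose to ``lean on \cite{Pla66}'' for a reduction guaranteeing that $G$ is $\sigma$-compact, but this is circular --- that citation is the very theorem being proved --- and, more seriously, no such reduction can exist under the paper's definition of locally elliptic. Take $G = \bigoplus_{i \in I} \mathbb{Z}/2\mathbb{Z}$ with $I$ uncountable, equipped with the discrete topology. This group is locally compact, and every compact subset is finite, hence generates a finite subgroup; so $G$ is locally elliptic. But a compact open subgroup of a discrete group is finite, so a countable increasing union of such subgroups is countable, while $G$ is uncountable. Thus ``locally elliptic $\Rightarrow$ countable increasing union of compact open subgroups'' is false without a $\sigma$-compactness (equivalently, countability of $G/U$) hypothesis, which Platonov's setting --- and this paper, implicitly --- builds in. What your argument actually proves from the definition alone is: $G$ is locally elliptic if and only if $G$ is the \emph{directed} union of its compact open subgroups; the passage from a directed union to a countable increasing chain is equivalent to $\sigma$-compactness and must be assumed, not derived. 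Once it is assumed, your chain construction closes the proof.
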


\textbf{Throughout the remainder of the article, if we say that $G = \bigcup_{i=1}^\infty K_i$ is a locally elliptic group, then we implicitly assume that $G$ is locally compact, the $K_i$ are compact open subgroups of $G$, and $K_i \le K_{i+1}$ for all $i \in \mathbb{N}$. Furthermore, we assume that $G$ is not compact to avoid any trivialities.} \\

We give numerous examples of locally elliptic groups and their connection with the theory in this paper in Section \ref{sec:8}.


\section{Weights on locally elliptic groups}\label{sec:3}

In this section we will study some properties concerning weights on locally elliptic groups that will be critical to our results later in the article. 

In the context of weighted $L^1$-algebras on compactly generated groups of polynomial growth, the condition of a weight satisfying the GRS condition is an important part of the theory. Indeed, a weighted $L^1$-algebra $L^1(G,\omega)$ on a compactly generated group of polynomial growth $G$ is Hermitian if and only if $\omega$ satisfies the GRS condition \cite[Theorem 1.3]{FGL06}.

\begin{dfn}
Let $G$ be a locally compact group and $\omega$ a weight on $G$. The weight $\omega$ is said to satisfy the \hlight{GRS condition}, if for all $x \in G$, $\lim_{n \rightarrow \infty} \omega(x^n)^{1/n} = 1$.
\end{dfn}

We will now show that every weight on a locally elliptic group satisfies the GRS condition.

\begin{prop}\label{prop:GRSw}
Let $G$ be a locally elliptic group and $\omega$ a weight on $G$. Then $\omega$ satisfies the GRS condition.
\end{prop}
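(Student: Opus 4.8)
The key structural fact is that $G = \bigcup_{i=1}^\infty K_i$ is a union of compact open subgroups. So the plan is to reduce the GRS condition to the boundedness of $\omega$ on a single compact subgroup. Fix $x \in G$. Since the $K_i$ are increasing and cover $G$, there is some index $j$ with $x \in K_j$. Because $K_j$ is a subgroup, \emph{every} power $x^n$ lands in $K_j$ as well, i.e.\ $\{x^n : n \in \mathbb{N}\} \subseteq K_j$. This is the crucial observation that distinguishes the locally elliptic case from the general polynomial-growth case: the cyclic group generated by $x$ is relatively compact.

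Next I would exploit that $\omega$ is bounded on compact sets (Definition \ref{dfn:weight}(i)). The subgroup $K_j$ is compact, so there is a constant $M := \sup_{k \in K_j} \omega(k) < \infty$. Since $\omega \ge 1$ everywhere and $x^n \in K_j$ for all $n$, we obtain the two-sided bound
\begin{displaymath} 1 \le \omega(x^n) \le M \qquad \text{for all } n \in \mathbb{N}. \end{displaymath}
Taking $n$-th roots gives $1 \le \omega(x^n)^{1/n} \le M^{1/n}$, and since $M^{1/n} \to 1$ as $n \to \infty$, the squeeze theorem forces $\lim_{n \rightarrow \infty} \omega(x^n)^{1/n} = 1$. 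As $x$ was arbitrary, $\omega$ satisfies the GRS condition.

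I do not anticipate a serious obstacle here: unlike in the compactly generated polynomial-growth setting, where establishing the GRS condition requires genuine growth estimates, the local ellipticity makes the orbit of powers of any element relatively compact, and the result then follows immediately from the definitional boundedness of $\omega$ on compact sets. The only point requiring a modicum of care is making sure one does not need sub-multiplicativity at all — the argument uses only that $\omega$ is bounded on the compact subgroup $K_j$ together with $\omega \ge 1$, so the proof is essentially a one-line consequence of the covering by compact open subgroups.
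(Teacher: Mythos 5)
Your proof is correct and is essentially identical to the paper's: both observe that all powers of $x$ lie in a single compact open subgroup (the paper invokes local ellipticity directly, you locate $x$ in a member $K_j$ of the increasing union), then apply boundedness of $\omega$ on compact sets and take $n$-th roots. The only cosmetic difference is that you make the lower bound $\omega \ge 1$ and the squeeze explicit, which the paper leaves implicit.
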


\begin{proof}
Let $x \in G$. Since $G$ is locally elliptic, the set $\{ x^n : n \in \mathbb{N} \}$ is contained in some compact open subgroup $K \le G$. By definition, $\omega$ is bounded on compact sets, so $C:= \sup\{ \omega(k) : k \in K \}$ is finite. Then, for all $n \in \mathbb{N}$, $\omega(x^n) \le C$. Thus we have that 
\begin{displaymath} \lim_{n \rightarrow \infty} \omega(x^n)^{1/n} \le \lim_{n \rightarrow \infty} C^{1/n} = 1.\end{displaymath}
\end{proof}

We now define a class of weights that exist on a locally elliptic group. These weights will be important throughout the article.

\begin{dfn}
Let $G = \bigcup_{i=1}^\infty K_i$ be a locally elliptic group and $\bold{a} := (a_i)_{i=1}^\infty \subseteq \mathbb{R}_{\ge 1}$ a sequence. Define a function $\omega_{\bold{a}}$ on $G$ by
\begin{displaymath} \omega_{\bold{a}} := a_1\chi_{K_1} + \sum_{i=2}^\infty a_i \chi_{K_i \setminus K_{i-1}} \end{displaymath}
where $\chi_U$ denotes the characteristic function on the set $U \subseteq G$. If $\bf{a}$ is a non-decreasing sequence, one checks easily that $\omega_{\bold{a}}(xy) \le \max\{\omega_{\bold{a}}(x),\omega_{\bold{a}}(y)\}$ for all $x,y \in G$, from which it follows that $\omega_{\bf{a}}$ is sub-additive weight on $G$.
\end{dfn}

We now show that any weight on a locally elliptic group $G$ is dominated by a weight of the form $\omega_\bold{a}$. Furthermore, we show that $\bold{a}$ can be chosen so that $\omega_\bold{a}$ is an $L^\Phi$-weight for any Young function $\Phi$. We use the following lemma in the proof of the next proposition.

\begin{lem}\label{lem:lpw}\cite[Korollar 3.8]{Fei79}
Let $G$ be a unimodular locally compact group and $\omega$ a locally-integrable sub-additive weight on $G$. If $p \in (1,\infty)$ and $\omega^{-1} \in L^{q}(G)$ where $q := \frac{p}{p-1}$, then $L^p(G,\omega)$ is a Banach $*$-algebra.
\end{lem}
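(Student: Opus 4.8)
The plan is to verify the three defining features of a Banach $*$-algebra for $L^p(G,\omega)$: that it is a Banach space, that it is closed under convolution with a norm estimate that is submultiplicative up to a constant, and that the involution $f^*(x) = \overline{f(x^{-1})}$ is isometric. The first point is the standard fact that a weighted $L^p$-space is a Banach space, so the real content lies in the convolution bound. The enabling observation I would record first is that the hypothesis $\omega^{-1} \in L^q(G)$ forces the embedding $L^p(G,\omega)\subseteq L^1(G)$: writing $|g| = |g\omega|\,\omega^{-1}$ and applying H\"older's inequality with the conjugate exponents $p$ and $q$ gives
\begin{displaymath} \|g\|_{L^1(G)} \le \|\omega^{-1}\|_{L^q(G)}\,\|g\|_{L^p(G,\omega)}, \end{displaymath}
so every element of the weighted space is automatically integrable, with a uniform bound.

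The core of the argument is a pointwise estimate exploiting sub-additivity. Writing $(f*g)(x) = \int_G f(y) g(y^{-1}x)\,dy$ and using $\omega(x) = \omega(y\cdot y^{-1}x) \le C(\omega(y) + \omega(y^{-1}x))$, I would bound $|(f*g)(x)|\,\omega(x)$ by $C$ times the sum of $(|f|\omega)*|g|\,(x)$ and $|f|*(|g|\omega)\,(x)$. Taking $L^p(G)$-norms and applying Young's convolution inequality $\|a*b\|_p \le \|a\|_p\|b\|_1$ to each term, placing the weighted factor in $L^p$ and the unweighted factor in $L^1$ via the embedding just established, yields
\begin{displaymath} \|f*g\|_{L^p(G,\omega)} \le 2C\,\|\omega^{-1}\|_{L^q(G)}\,\|f\|_{L^p(G,\omega)}\|g\|_{L^p(G,\omega)}. \end{displaymath}
Since this differs from submultiplicativity only by a constant, passing to an equivalent multiple of the norm makes $L^p(G,\omega)$ a Banach algebra.

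For the involution I would use that $G$ is unimodular, so Haar measure is inversion-invariant, together with the symmetry $\omega(x) = \omega(x^{-1})$ built into the definition of a weight: the substitution $y = x^{-1}$ in $\int_G |f(x^{-1})|^p\,\omega(x)^p\,dx$ shows $\|f^*\|_{L^p(G,\omega)} = \|f\|_{L^p(G,\omega)}$, so the involution is isometric and in particular the space is closed under it. Rescaling the norm by a constant preserves this isometry, which completes the verification.

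I expect the main obstacle to be the convolution estimate, and specifically the need to distribute $\omega(x)$ across the two factors via sub-additivity so that exactly one factor carries the weight in each resulting term. Without the embedding $L^p(G,\omega)\subseteq L^1(G)$ coming from $\omega^{-1}\in L^q(G)$ there would be no way to close the Young-inequality bound, so these two ingredients must be deployed in tandem.
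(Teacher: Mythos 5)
Your proof is correct, but it is worth noting that the paper itself offers no proof of this lemma: it is quoted directly from Feichtinger \cite[Korollar 3.8]{Fei79} (with a correction to the statement and the added observation that symmetry of $\omega$ upgrades ``Banach algebra'' to ``Banach $*$-algebra''). So your argument is a genuinely self-contained alternative to the paper's citation, and it is essentially the standard one: the embedding $L^p(G,\omega)\subseteq L^1(G)$ via H\"older from $\omega^{-1}\in L^q(G)$, the splitting $\omega(x)\le C(\omega(y)+\omega(y^{-1}x))$ from sub-additivity, the two one-sided Young inequalities $\norm{a*b}_p\le\norm{a}_1\norm{b}_p$ and $\norm{a*b}_p\le\norm{a}_p\norm{b}_1$ (the second of which is where unimodularity is needed), and isometry of the involution from unimodularity plus symmetry of $\omega$. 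This is also precisely the mechanism behind the Orlicz-space generalisation the paper does rely on, namely Proposition \ref{prop:Orc2} (\cite[Theorem 4.5]{OS17}), whose part (i) is exactly your intermediate bound $\norm{f*g}\le C(\norm{f}_{L^1}\norm{g}_{\omega}+\norm{f}_{\omega}\norm{g}_{L^1})$ before you close it up with the embedding into $L^1$. Two small points to make the verification complete: you should also record that the involution is anti-multiplicative, $(f*g)^*=g^**f^*$, which again uses unimodularity (it does not follow from isometry alone), and note that the renorming by the constant $2C\norm{\omega^{-1}}_{L^q(G)}$ is harmless because multiplying a norm by a scalar preserves both completeness and the isometry of the involution. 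With those remarks added, your argument is a valid replacement for the external citation.
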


We note that there is a mistake in the statement of \cite[Korollar 3.8]{Fei79} but we have corrected it in our version. Also, in \cite{Fei79}, weights are not assumed to be symmetric, so the conclusion in \cite[Korollar 3.8]{Fei79} is that $L^p(G,\omega)$ is a Banach algebra, not a Banach $*$-algebra.

\begin{prop}\label{prop:polyw1}
Let $G = \bigcup_{i=1}^\infty K_i$ be a locally elliptic group, $\mu$ a left Haar measure on $G$ normalised so that $\mu(K_1) = 1$, $p \in [1,\infty)$ and $\omega$ a weight on $G$. Set $a_i := \sup \{ \omega(x) : x \in K_i \}$ for each $i \in \mathbb{N}$. Define two functions on $G$ by
\begin{displaymath} \omega^\sharp := a_1\chi_{K_1} + \sum_{i=2}^\infty a_i \chi_{K_i \setminus K_{i-1}} \end{displaymath}
and
\begin{displaymath} \omega^\sharp_p := (a_1+1)\chi_{K_1} + \sum_{i=2}^\infty (a_i + i^2) \mu(K_i\setminus K_{i-1})^{1/q}\chi_{K_i \setminus K_{i-1}} \end{displaymath}
where $q = \frac{p}{p-1}$ if $p>1$, and $q =1$ if $p=1$.
Then, $\omega^\sharp$ is a sub-additive weight on $G$, $\omega \le \omega^\sharp \le \omega^\sharp_p$, and $1/\omega^\sharp_p \in L^{q}(G)$. Furthermore, if the sequence of indices  $([K_{i+1}:K_{i}])_{i=1}^\infty$ is non-decreasing, then $\omega^\sharp_p$ is a sub-additive $L^p$-weight on $G$.
\end{prop}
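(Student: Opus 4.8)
The plan is to treat both displayed functions as instances of the ``level weight'' construction $\omega_{\mathbf{a}}$ introduced above, for which sub-additivity is automatic as soon as the defining coefficient sequence is non-decreasing. Throughout I write $\nu(x) := \min\{ i : x \in K_i \}$ for the level of $x$, so that $\omega^\sharp(x) = a_{\nu(x)}$ and $\omega^\sharp_p(x) = b_{\nu(x)}$, where $b_1 := a_1+1$ and $b_i := (a_i + i^2)\mu(K_i \setminus K_{i-1})^{1/q}$ for $i \ge 2$. Since $K_{i-1} \le K_i$ and $a_i = \sup_{K_i}\omega$, the sequence $(a_i)$ is non-decreasing with $a_i \ge 1$. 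At the outset I record the estimate $\mu(K_i \setminus K_{i-1}) = ([K_i:K_{i-1}]-1)\mu(K_{i-1}) \ge 1$ for $i \ge 2$, which follows from $[K_i:K_{i-1}] \ge 2$ and $\mu(K_{i-1}) \ge \mu(K_1) = 1$ (discarding any repeated subgroup, we may assume all inclusions are strict). In particular $b_i \ge 1$ for all $i$.

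First, $\omega^\sharp = \omega_{\mathbf{a}}$ for the non-decreasing sequence $\mathbf{a} = (a_i)$, so it is a sub-additive weight by the remark following the definition of $\omega_{\mathbf{a}}$; concretely, if $\nu(x) \le \nu(y)$ then $x,y \in K_{\nu(y)}$, hence $xy, x^{-1} \in K_{\nu(y)}$, which gives $\omega^\sharp(xy) \le \max\{\omega^\sharp(x),\omega^\sharp(y)\}$ and $\omega^\sharp(x^{-1}) = \omega^\sharp(x)$. The inequality $\omega \le \omega^\sharp$ is immediate from $\omega(x) \le \sup_{K_{\nu(x)}}\omega = a_{\nu(x)}$. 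For $\omega^\sharp \le \omega^\sharp_p$ I compare coefficients piece by piece: $a_1 \le a_1+1$, and for $i \ge 2$ the bound $\mu(K_i \setminus K_{i-1})^{1/q} \ge 1$ yields $a_i \le (a_i + i^2)\mu(K_i \setminus K_{i-1})^{1/q} = b_i$.

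Next I verify $1/\omega^\sharp_p \in L^q(G)$ by direct integration. Since $\omega^\sharp_p$ equals the constant $b_i$ on $K_i \setminus K_{i-1}$ (and $b_1$ on $K_1$), the factor $\mu(K_i \setminus K_{i-1})^{1/q}$ is arranged precisely so that the measure cancels:
\begin{displaymath}
\int_G (\omega^\sharp_p)^{-q}\,d\mu = (a_1+1)^{-q} + \sum_{i=2}^\infty (a_i+i^2)^{-q} \le 1 + \sum_{i=2}^\infty i^{-2q} < \infty,
\end{displaymath}
using $a_i \ge 1$ and $q \ge 1$. This step requires no hypothesis on the indices.

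Finally, under the assumption that $([K_{i+1}:K_i])_i$ is non-decreasing, I claim $(b_i)$ is non-decreasing, which is the crux of the argument and the only place the index hypothesis is used. The point is that $\mu(K_i \setminus K_{i-1}) = ([K_i:K_{i-1}]-1)\mu(K_{i-1})$ is itself non-decreasing: from $[K_{i+1}:K_i] \ge [K_i:K_{i-1}]$ and $\mu(K_i) \ge \mu(K_{i-1})$ one obtains $\mu(K_{i+1}\setminus K_i) \ge \mu(K_i \setminus K_{i-1})$. Since $(a_i + i^2)$ is increasing and $t \mapsto t^{1/q}$ is increasing, $b_i$ is a product of non-decreasing positive factors, hence non-decreasing for $i \ge 2$; the estimate $b_1 = a_1+1 \le a_2 + 4 \le b_2$ handles the first step. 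Thus $\omega^\sharp_p = \omega_{\mathbf{b}}$ with $\mathbf{b}$ non-decreasing, so $\omega^\sharp_p$ is a sub-additive weight by the $\omega_{\mathbf{a}}$ remark. Being bounded on compact sets it is locally integrable, and together with $1/\omega^\sharp_p \in L^q(G)$ this lets Lemma \ref{lem:lpw} conclude that $L^p(G,\omega^\sharp_p)$ is a Banach $*$-algebra for $p>1$; the case $p=1$ is the standard weighted group algebra. Hence $\omega^\sharp_p$ is a sub-additive $L^p$-weight. The main obstacle is exactly the monotonicity of $(b_i)$, i.e.\ of $\mu(K_i \setminus K_{i-1})$, since everything else is either definitional bookkeeping or a direct integral estimate.
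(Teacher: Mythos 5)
Your proof is correct and follows essentially the same route as the paper: both reduce sub-additivity of $\omega^\sharp$ and $\omega^\sharp_p$ to the monotonicity of the coefficient sequences via the $\omega_{\mathbf{a}}$ construction, use the normalisation $\mu(K_1)=1$ and properness of the inclusions to get $\mu(K_i\setminus K_{i-1})\ge 1$ (hence $\omega\le\omega^\sharp\le\omega^\sharp_p$), verify $1/\omega^\sharp_p\in L^q(G)$ by the same telescoping integral, and invoke Lemma \ref{lem:lpw} for the $L^p$-weight conclusion. If anything, you are slightly more explicit than the paper in checking that $\mu(K_i\setminus K_{i-1})$ is non-decreasing, in handling the first coefficient $b_1\le b_2$, and in treating the $p=1$ case separately, all of which the paper leaves implicit.
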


\begin{proof}

Since each of the $K_i$ are compact open subgroups of $G$ and $\omega$ is bounded on compact sets by definition, it follows that $a_i := \sup \{ \omega(x) : x \in K_i \}$ is finite for each $i$. Also, since $K_i \le K_{i+1}$ for each $i$, the sequence $(a_i)_{i=1}^\infty \subseteq \mathbb{R}_{\ge1}$ is non-decreasing, and hence $\omega^\sharp$ is a sub-additive weight on $G$ by the prior discussion. Similarly, if the sequence $([K_{i+1}:K_{i}])_{i=1}^\infty$ is non-decreasing, then the sequence $((a_i+i^2)\mu(K_i\setminus K_{i-1})^{1/q})_{i=1}^\infty$ (where we take $K_0$ to be the empty set) is also non-decreasing, and hence $\omega^\sharp_p$ is a sub-additive weight on $G$.

Since we assume that $K_{i}$ is a proper subgroup of $K_{i+1}$ for each $i$, it follows that 
\begin{align*} \mu(K_{i+1} \setminus K_{i}) &= \mu(K_{i+1}) - \mu(K_{i}) \\
& = [K_{i+1}:K_{i}]\mu(K_{i})  - \mu(K_{i}) \\ 
&\ge 2\mu(K_{i})  - \mu(K_{i})  \ge 1.
\end{align*}
It is then clear, by definition of $\omega^\sharp$ and $\omega^\sharp_p$, that $\omega \le \omega^\sharp \le \omega^\sharp_p$.

We now just need to show that $\omega^\sharp_p$ is an $L^p$-weight. First, we show that $1/\omega^\sharp_p \in L^{q}(G)$. To do this, one checks that
\begin{displaymath} 1/\omega^\sharp_p = \frac{\chi_{K_1}}{a_1+1} + \sum_{i=2}^\infty  \frac{\chi_{K_i \setminus K_{i-1}}}{(a_i + i^2)\mu(K_i\setminus K_{i-1})^{1/q}}\end{displaymath}
from which it follows that for any $q \in [1,\infty)$
\begin{align*} 
\int_G (1/\omega^\sharp_p(x))^{q} \: d\mu(x) &= \int_{K_1} (1/\omega^\sharp_p(x))^{q} \: d\mu(x) + \sum_{i=2}^{\infty} \int_{K_{i} \setminus K_{i-1}} (1/\omega^\sharp_p(x))^{q} \: d\mu(x) \\ 
&= \frac{1}{(a_1+1)^{q}} + \sum_{i=2}^\infty \frac{1}{(a_i+i^2)^{q}} \\
& \le \frac{1}{(a_1+1)^{q}} + \sum_{i=2}^\infty \frac{1}{i^{2q}} < \infty.
\end{align*}
Thus $1/\omega^\sharp_p \in L^{q}(G)$. It then follows by Lemma \ref{lem:lpw} that $\omega^\sharp_p$ is an $L^p$-weight on $G$. This completes the proof.
\end{proof}

We recall from the introduction that if $G$ is locally elliptic group, a \hlight{standard decomposition} of $G$ is a set of compact open subgroups $(K_i)_{i \in \mathbb{N}}$ of $G$ such that $K_i \le K_{i+1}$ for each $i$, $G = \bigcup_{i=1}^\infty K_i$ and the sequence of indices $([K_{i+1}:K_i])_{i=1}^\infty$ is non-decreasing. The following is then a direct corollary of the theorem.

\begin{cor}
Let $G$ be a locally elliptic group, $(K_i)_{i\in \mathbb{N}}$ a standard decomposition of $G$, $(\Phi,\Psi)$ a complementary pair of Young functions and $\omega$ a weight on $G$. Then, the weight $\omega^\sharp_1$, as constructed in the proposition with respect to the compact open subgroups $(K_i)_{i\in \mathbb{N}}$, is an $L^\Phi$-weight.
\end{cor}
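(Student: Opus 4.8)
The plan is to verify that $\omega^\sharp_1$ satisfies the two hypotheses of Proposition \ref{prop:Orc2}, namely that it is a sub-additive weight and that $1/\omega^\sharp_1 \in L^\Psi(G)$; once both hold, that proposition immediately yields that $L^\Phi(G,\omega^\sharp_1)$ is a Banach $*$-algebra, which is exactly the assertion that $\omega^\sharp_1$ is an $L^\Phi$-weight.

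First I would apply Proposition \ref{prop:polyw1} with $p = 1$, so that $q = 1$ and the weight produced is precisely $\omega^\sharp_1$. Since $(K_i)_{i\in\mathbb{N}}$ is a standard decomposition, the index sequence $([K_{i+1}:K_i])_{i=1}^\infty$ is non-decreasing, which is exactly the hypothesis under which Proposition \ref{prop:polyw1} guarantees that $\omega^\sharp_1$ is a sub-additive weight and that $1/\omega^\sharp_1 \in L^1(G)$. This settles the sub-additivity requirement and gives integrability, but Proposition \ref{prop:Orc2} requires membership in $L^\Psi(G)$ rather than $L^1(G)$, so a short bridging argument is needed.

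The bridge is the elementary inclusion $L^1(G) \cap L^\infty(G) \subseteq L^\Psi(G)$ for any Young function $\Psi$. Because $\omega^\sharp_1$ takes values in $[1,\infty)$, we have $1/\omega^\sharp_1 \le 1$ pointwise, so $1/\omega^\sharp_1 \in L^\infty(G)$, and combined with $1/\omega^\sharp_1 \in L^1(G)$ from the previous step, $1/\omega^\sharp_1 \in L^1(G) \cap L^\infty(G)$. To verify the inclusion in general, let $f \in L^1(G) \cap L^\infty(G)$ with $M := \norm{f}_\infty$. Using convexity of $\Psi$ together with $\Psi(0) = 0$, the chord estimate gives $\Psi(\alpha t) \le \frac{t}{M}\Psi(\alpha M)$ for all $0 \le t \le M$ and any fixed $\alpha > 0$, whence
\begin{displaymath}
\int_G \Psi(\alpha \lvert f(x)\rvert)\, dx \le \frac{\Psi(\alpha M)}{M} \int_G \lvert f(x)\rvert\, dx = \frac{\Psi(\alpha M)}{M}\, \norm{f}_{L^1(G)} < \infty,
\end{displaymath}
so $f \in L^\Psi(G)$. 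Applying this to $f = 1/\omega^\sharp_1$ gives $1/\omega^\sharp_1 \in L^\Psi(G)$.

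With both hypotheses of Proposition \ref{prop:Orc2} now in hand, that proposition yields that $L^\Phi(G,\omega^\sharp_1) \subseteq L^1(G)$ is a Banach $*$-algebra under convolution and the involution $f^* (x) = \overline{f(x^{-1})}$, completing the proof. I do not expect any genuine obstacle here: the corollary is a direct consequence of Proposition \ref{prop:polyw1} and Proposition \ref{prop:Orc2}, and the only substantive point is the passage from $L^1(G)$ to $L^\Psi(G)$, which is resolved by the trivial $L^\infty$ bound on $1/\omega^\sharp_1$ together with the convexity estimate above.
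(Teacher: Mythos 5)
Your proof is correct and follows essentially the same route as the paper's: the paper likewise deduces the corollary from Proposition \ref{prop:polyw1} together with the inclusion $1/\omega^\sharp_1 \in L^1(G) \cap L^\infty(G) \subseteq L^\Psi(G)$ and then Proposition \ref{prop:Orc2}, the only difference being that the paper asserts the inclusion without proof while you supply the convexity estimate. One tiny caveat in your bridging step: since a Young function may take the value $+\infty$, the bound $\int_G \Psi(\alpha\lvert f\rvert)\,dx \le \frac{\Psi(\alpha M)}{M}\norm{f}_{L^1(G)}$ is only useful after choosing $\alpha$ small enough that $\Psi(\alpha M) < \infty$ (possible because $\Psi$, being complementary to the Young function $\Phi$, is finite on a neighbourhood of $0$), rather than for ``any fixed $\alpha > 0$''; this is harmless since membership in $L^\Psi(G)$ requires only one such $\alpha$.
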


\begin{proof}
Indeed, by Proposition \ref{prop:polyw1}, $1/\omega^\sharp_1 \in L^1(G) \cap L^\infty(G) \subseteq L^\Psi(G)$. It then follows from Proposition \ref{prop:Orc2} that $L^\Phi_{}(G,\omega^\sharp_1)$ is a Banach $*$-algebra and hence $\omega^\sharp_1$ is an $L^\Phi$-weight.
\end{proof}


We will now describe a method intrinsic to locally elliptic groups which allows one to determine if a weight is (equivalent to) a sub-additive weight.

\begin{dfn}
Let $G$ be a locally elliptic group and $(K_i)_{i\in\mathbb{N}}$ a sequence of compact open subgroups of $G$ such that $K_i \le K_{i+1}$ for each $i$ and $G = \bigcup_{i=1}^\infty K_i$. Given a weight $\omega$ on $G$, define the \hlight{variation} of $\omega$ with respect to the $(K_i)_{i \in \mathbb{N}}$ to be the value
\begin{displaymath} \text{Var}(\omega,(K_i)_{i\in \mathbb{N}}) := \sup_{i \in \mathbb{N}} (\max_{x \in K_i} \omega(x) - \min_{y \in K_i} \omega(y)). \end{displaymath}
We say that $\omega$ has \hlight{bounded variation} if there exists such a sequence of compact open subgroups $(K_i)_{i\in \mathbb{N}}$ in $G$ such that $\text{Var}(\omega, (K_i)_{i\in \mathbb{N}}) < \infty$.
\end{dfn}

We then have the following result.

\begin{prop}\label{prop:bvweight}
Let $G$ be a locally elliptic group and $\omega$ a weight on $G$. If $\omega$ has bounded variation then it is equivalent to a sub-additive weight.
\end{prop}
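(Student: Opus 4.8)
The plan is to show that $\omega$ is equivalent to the sub-additive weight $\omega^\sharp$ produced by Proposition \ref{prop:polyw1}, built from the very chain of compact open subgroups witnessing bounded variation. By hypothesis there is an ascending chain $(K_i)_{i\in\mathbb{N}}$ of compact open subgroups with $G = \bigcup_{i=1}^\infty K_i$ and $V := \text{Var}(\omega,(K_i)_{i\in\mathbb{N}}) < \infty$; after discarding repetitions I may assume the chain is strictly increasing, which does not change the variation, and non-compactness of $G$ keeps the chain infinite. Write $M_i := \sup_{x\in K_i}\omega(x)$ and $m_i := \inf_{x\in K_i}\omega(x)$, so that the bounded variation hypothesis reads $M_i - m_i \le V$ for every $i$. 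Note that the constants $a_i = \sup\{\omega(x) : x \in K_i\}$ appearing in Proposition \ref{prop:polyw1} are precisely these maxima, $a_i = M_i$. Forming $\omega^\sharp$ with respect to this chain, that proposition already gives that $\omega^\sharp$ is a sub-additive weight and that $\omega \le \omega^\sharp$, so one of the two comparisons needed for equivalence is free.

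Next I would establish the reverse comparison $\omega^\sharp \le (1+V)\omega$. Fix $x \in K_i \setminus K_{i-1}$ (with the convention $K_0 = \emptyset$ from Proposition \ref{prop:polyw1}, so that this also covers the case $x \in K_1$). There $\omega^\sharp(x) = a_i = M_i$, while $\omega(x) \ge m_i \ge M_i - V$; hence $\omega^\sharp(x) = M_i \le \omega(x) + V$. Since $\omega \ge 1$ everywhere, this additive gap becomes multiplicative: $\omega^\sharp(x) \le \omega(x) + V \le (1+V)\omega(x)$. As every $x \in G$ lies in exactly one shell $K_i \setminus K_{i-1}$, the inequality holds pointwise on $G$. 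Combining it with $\omega \le \omega^\sharp$ yields $\omega \le \omega^\sharp \le (1+V)\omega$, so $\omega$ and $\omega^\sharp$ are equivalent in the sense of Definition \ref{dfn:weight}; as $\omega^\sharp$ is sub-additive, this is exactly the claim.

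The argument is short, and the only genuine content is the passage from the additive control $M_i - m_i \le V$ supplied by bounded variation to a multiplicative comparison of weights. This is where the normalisation $\omega \ge 1$ is essential: without a uniform lower bound on $\omega$, an additive gap of size $V$ could not be absorbed into a constant multiple. Consequently the main thing to watch is bookkeeping rather than any real obstacle, namely ensuring that the chain used to build $\omega^\sharp$ is the same chain witnessing bounded variation and that the identification $a_i = M_i$ is applied consistently.
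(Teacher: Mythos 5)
Your proof is correct and is essentially the paper's own argument: the paper likewise builds $\omega^\sharp$ from Proposition \ref{prop:polyw1} using the chain witnessing bounded variation and asserts $\omega \le \omega^\sharp \le (C+1)\omega$, which it leaves as ``one checks easily.'' Your write-up simply supplies that check explicitly (the passage from the additive bound $M_i - m_i \le V$ to the multiplicative one via $\omega \ge 1$), so there is nothing to add.
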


\begin{proof}
Assume that $G$ is locally elliptic and $\omega$ a weight on $G$ with bounded variation. Let $(K_i)_{i \in \mathbb{N}}$ be a sequence of compact open subgroups of $G$ such that $K_i \le K_{i+1}$ for each $i$, $G = \bigcup_{i=1}^\infty K_i$, and $\text{Var}(\omega,(K_i)_{i\in\mathbb{N}}) =: C < \infty$. Let $\omega^\sharp$ be the sub-additive weight constructed in Proposition \ref{prop:polyw1} with respect to the compact open subgroups $(K_i)_{i \in \mathbb{N}}$. Then one checks easily that for all $x \in G$
\begin{displaymath} \omega(x) \le \omega^\sharp(x) \le (C+1)\omega(x) \end{displaymath}
which, by definition, means that $\omega$ is equivalent to the sub-additive weight $\omega^\sharp$. 
\end{proof}

In the following example, we show that a weight on a locally elliptic group need not have bounded variation nor be sub-additive.

\begin{exm}
For $n \in \mathbb{N}$, let $C_n$ denote the cyclic group of order $n$. Define the group $G := \bigoplus_{n=1}^\infty C_{n}$. Clearly $G$ is a locally finite group and hence locally elliptic. We define a weight $\omega$ on $G$ as follows: let $x_{n}$ denote the canonical generator of the copy of $C_{n}$ in $G$ for each $n \in \mathbb{N}$. Then, given a general element $x = x_{n_1}^{m_1} \cdots x_{n_k}^{m_k} \in G$, where $1\le m_i \le n_i-1$ is an integer for each $i$, we define $\omega$ on $x$ by
\begin{displaymath} \omega(x) := (n_1)^{\max\{m_1,n_1-m_1\}} \cdots (n_k)^{\max\{m_k,n_k-m_k\}}. \end{displaymath} 
One checks that $\omega$ is a weight on $G$. The weight $\omega$ is not sub-additive since, for example,
\begin{displaymath} \frac{\omega(x_{2n}^nx_{4n}^{2n})}{\omega(x_{2n}^n) + \omega(x_{4n}^{2n})} = \frac{(2n)^n(4n)^{2n}}{(2n)^{n} + (4n)^{2n}} \end{displaymath}
is unbounded as $n \rightarrow \infty$. This also implies that $\omega$ cannot have bounded variation by the previous proposition.
\end{exm}


\section{The Hermitian property}\label{sec:4}

In this section we study the Hermitian and quasi-Hermitian properties for weighted Orlicz $*$-algebras on locally elliptic groups. We first start by proving Theorem \ref{thm:1} from the introduction. \\

\noindent\textit{Proof of Theorem \ref{thm:1}.}
The proof is similar to the proof of Lemma 3.7 and Lemma 3.8 in \cite{KMB12}. Indeed, since $1/\omega \in L^\Psi(G)$ and hence $L^\Phi(G,\omega) \subseteq L^1(G)$, it follow by an identical argument to that of \cite[Lemma 2.2]{OO15} that there exists a constant $C_1>0$ such that 
\begin{displaymath} \norm{f}_{L^1(G)} \le C_1 \norm{f}_{L^\Phi(G,\omega)} \end{displaymath}
for all $f \in L^\Phi(G,\omega)$. Then, given $f \in L^\Phi(G,\omega)$,
\begin{displaymath} \nu_{L^1(G)}(f) = \lim_{n \rightarrow \infty} \norm{f^{*n}}_{L^1(G)}^{1/n} \le \lim_{n \rightarrow \infty} C_1^{1/n} \norm{f^{*n}}_{L^\Phi(G,\omega)}^{1/n} = \nu_{L^\Phi(G,\omega)}(f).\end{displaymath}
Thus, we just need to show that $\nu_{L^\Phi(G,\omega)}(f) \le \nu_{L^1(G)}(f)$ for all $f \in L^\Phi(G,\omega^\sharp_1)$. We now fix $f \in L^\Phi(G,\omega^\sharp_1)$. By Proposition \ref{prop:Orc2}(i), since $\omega^\sharp_1$ is sub-additive, there exists a constant $C_2>0$ such that 
\begin{displaymath} \norm{f*f}_{L^\Phi(G,\omega^\sharp_1)} \le 2C_2 \norm{f}_{L^\Phi(G,\omega^\sharp_1)}\norm{f}_{L^1(G)}. \end{displaymath}
By induction,
\begin{displaymath} \norm{f^{*2n}}_{L^\Phi(G,\omega^\sharp_1)} \le (2C_2)^n \norm{f}_{L^\Phi(G,\omega^\sharp_1)} \norm{f}_{L^1(G)}^{2^n-1}.\end{displaymath}
Then, we have that, since $\omega \le \omega^\sharp_1$,
\begin{align*} \nu_{L^\Phi(G,\omega)}(f) &\le \nu_{L^\Phi(G,\omega^\sharp_1)}(f) = \lim_{n \rightarrow \infty} \norm{f^{*{2^n}}}_{L^\Phi(G,\omega^\sharp_1)}^{2^{-n}}\\
& \le \lim_{n \rightarrow \infty} (2C_2)^{n2^{-n}} \norm{f}_{L^\Phi(G,\omega^\sharp_1)}^{2^{-n}} \norm{f}_{L^1(G)}^{1-2^{-n}} \\
&= \norm{f}_{L^1(G)}.
\end{align*}
It then follows that 
\begin{displaymath} \nu_{L^\Phi(G,\omega)}(f) = \lim_{n \rightarrow \infty} \nu_{L^\Phi(G,\omega)}(f^{*n})^{1/n} \le \lim_{n \rightarrow \infty} \norm{f^{*n}}_{L^1(G)}^{1/n} = \nu_{L^1(G)}(f).  \end{displaymath}
This completes the proof. \qed

We now use Theorem \ref{thm:1} to prove Theorem \ref{thm:2} from the introduction. The proof of Theorem \ref{thm:2} relies heavily on \cite[Lemma 3.1]{FGL06}. We state this lemma below for ease of reference, though we have slightly rephrased the statement for our purposes.

\begin{lem}\cite[Lemma 3.1]{FGL06}\label{lem:spec}
Let $\mathcal{A} \subseteq \mathcal{B}$ be a nested pair of Banach $*$-algebras that either both have a common identity element or both have no identity. Then the following are equivalent:
\begin{enumerate}
   \item[(i)] $\partial \sigma_{\mathcal{A}}(a) \subseteq \partial\sigma_{\mathcal{B}}(a)$ for all $a \in \mathcal{A}$;
   \item[(ii)] $\partial \sigma_{\mathcal{A}}(a) \subseteq \sigma_{\mathcal{B}}(a)$ for all $a \in \mathcal{A}$;
   \item[(iii)] $\nu_{\mathcal{A}}(a) = \nu_{\mathcal{B}}(a)$ for all $a \in \mathcal{A}$.
\end{enumerate}
Furthermore, if $\mathcal{B}$ is Hermitian, then $(i)$, $(ii)$ and $(iii)$ are equivalent to
\begin{enumerate}
   \item[(iv)] $\sigma_{\mathcal{A}}(a) = \sigma_{\mathcal{B}}(a)$ for all $a \in \mathcal{A}$.
\end{enumerate}
\end{lem}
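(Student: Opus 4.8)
The plan is to prove the cycle $(i)\Rightarrow(ii)\Rightarrow(iii)\Rightarrow(i)$, and then, under the extra hypothesis that $\mathcal{B}$ is Hermitian, to fold $(iv)$ into the circle. Since elements such as $\lambda - a$ only make sense in the unital setting, I would work throughout in the unitisations $\mathcal{A}^+ \subseteq \mathcal{B}^+$ (sharing the adjoined unit) whenever spectra are involved, and record at the outset the two facts valid for any continuous inclusion $\mathcal{A}\hookrightarrow\mathcal{B}$ of Banach algebras: invertibility in $\mathcal{A}$ forces invertibility in $\mathcal{B}$, so $\sigma_{\mathcal{B}}(a)\subseteq\sigma_{\mathcal{A}}(a)$, $\nu_{\mathcal{B}}(a)\le\nu_{\mathcal{A}}(a)$, and the resolvent sets satisfy $\rho_{\mathcal{A}}(a)\subseteq\rho_{\mathcal{B}}(a)$ for every $a\in\mathcal{A}$. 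The structural point that gives the lemma content is that here $\mathcal{A}$ is a \emph{dense, non-closed} subalgebra of $\mathcal{B}$ equipped with a strictly stronger norm (as with $L^\Phi(G,\omega)\subseteq L^1(G)$): for a closed subalgebra the containment of boundaries would be automatic from spectral permanence, whereas in the dense case it is a genuine restriction.

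The implication $(i)\Rightarrow(ii)$ is immediate from $\partial\sigma_{\mathcal{B}}(a)\subseteq\sigma_{\mathcal{B}}(a)$. For $(ii)\Rightarrow(iii)$ I would pick $\lambda\in\sigma_{\mathcal{A}}(a)$ of maximal modulus; as every point of strictly larger modulus lies in the resolvent set, $\lambda\in\partial\sigma_{\mathcal{A}}(a)$, so $(ii)$ puts $\lambda\in\sigma_{\mathcal{B}}(a)$ and yields $\nu_{\mathcal{B}}(a)\ge|\lambda|=\nu_{\mathcal{A}}(a)$, which together with the reverse inequality gives equality. The heart of the argument is $(iii)\Rightarrow(i)$. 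Fixing $\lambda\in\partial\sigma_{\mathcal{A}}(a)$, choose $\mu_n\in\rho_{\mathcal{A}}(a)$ with $\mu_n\to\lambda$; by the spectral mapping theorem applied to the resolvent, $\nu_{\mathcal{A}}\big((a-\mu_n)^{-1}\big)=1/\mathrm{dist}(\mu_n,\sigma_{\mathcal{A}}(a))\to\infty$. Were $\lambda\in\rho_{\mathcal{B}}(a)$, continuity of inversion in $\mathcal{B}$ would force $(a-\mu_n)^{-1}\to(a-\lambda)^{-1}$ in $\mathcal{B}$, so $\nu_{\mathcal{B}}\big((a-\mu_n)^{-1}\big)\le\|(a-\mu_n)^{-1}\|_{\mathcal{B}}$ would remain bounded, contradicting $(iii)$. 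Hence $\lambda\in\sigma_{\mathcal{B}}(a)$; and since the approximants $\mu_n\in\rho_{\mathcal{A}}(a)\subseteq\rho_{\mathcal{B}}(a)$, the point $\lambda$ in fact lies on $\partial\sigma_{\mathcal{B}}(a)$, which is $(i)$.

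For the Hermitian addendum, $(iv)\Rightarrow(iii)$ is trivial, so it remains to obtain $(iv)$ from $(i)$ when $\mathcal{B}$ is Hermitian. I would first treat self-adjoint $a=a^*$: Hermitianness gives $\sigma_{\mathcal{B}}(a)\subseteq\mathbb{R}$, so $(i)$ yields $\partial\sigma_{\mathcal{A}}(a)\subseteq\mathbb{R}$; a short topological argument (a compact planar set whose boundary lies on a line must itself lie on that line) then forces $\sigma_{\mathcal{A}}(a)\subseteq\mathbb{R}$, whence $\sigma_{\mathcal{A}}(a)=\partial\sigma_{\mathcal{A}}(a)\subseteq\sigma_{\mathcal{B}}(a)\subseteq\sigma_{\mathcal{A}}(a)$ and the spectra coincide. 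To reach an arbitrary $a$ and $\lambda\in\rho_{\mathcal{B}}(a)$, I would use the factorisation trick: $a-\lambda$ is invertible in $\mathcal{B}$ precisely when the self-adjoint elements $(a-\lambda)^*(a-\lambda)$ and $(a-\lambda)(a-\lambda)^*$ are both invertible there; by the self-adjoint case these are then invertible in $\mathcal{A}$ as well, and assembling their inverses gives a two-sided inverse of $a-\lambda$ in $\mathcal{A}$. Thus $\rho_{\mathcal{B}}(a)\subseteq\rho_{\mathcal{A}}(a)$, which with the standing reverse inclusion gives $(iv)$.

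The main obstacle is the resolvent blow-up step $(iii)\Rightarrow(i)$: it is exactly here that one must work around the density (rather than closedness) of $\mathcal{A}$ in $\mathcal{B}$, and one must ensure the spectral-radius hypothesis is genuinely available for the resolvent elements $(a-\mu_n)^{-1}$, which live in the unitisation. The careful bookkeeping will be to apply $(iii)$ in $\mathcal{A}^+$ rather than merely in $\mathcal{A}$; I expect this to be routine but worth stating explicitly.
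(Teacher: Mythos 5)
The paper itself gives no proof of this lemma --- it is quoted (slightly rephrased) from \cite[Lemma 3.1]{FGL06} purely for reference --- so your attempt can only be measured against the standard argument, which it essentially reconstructs: the elementary inclusions $\sigma_{\mathcal{B}}(a)\subseteq\sigma_{\mathcal{A}}(a)$, $\rho_{\mathcal{A}}(a)\subseteq\rho_{\mathcal{B}}(a)$, the cycle $(i)\Rightarrow(ii)\Rightarrow(iii)\Rightarrow(i)$ with the resolvent blow-up at its heart, and the Hermitian addendum via the self-adjoint case plus the factorisation through $(a-\lambda)^*(a-\lambda)$ and $(a-\lambda)(a-\lambda)^*$. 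The Hermitian part of your proof is complete as written; note in particular that your self-adjoint case does extend painlessly from $\mathcal{A}$ to $\mathcal{A}^+$, because spectra (unlike spectral radii) are translation-equivariant: $\sigma_{\mathcal{A}^+}(b+\beta 1)=\sigma_{\mathcal{A}}(b)+\beta$.

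The gap is exactly the point you flagged and then dismissed as routine. In $(iii)\Rightarrow(i)$ you apply the spectral-radius equality to the resolvents $(a-\mu_n)^{-1}$, which lie in $\mathcal{A}^+\setminus\mathcal{A}$ whenever the algebras are non-unital (and $\mu_n\neq 0$ there, since $0\in\sigma_{\mathcal{A}}(a)$ automatically). But hypothesis $(iii)$ concerns only elements of $\mathcal{A}$, and it does \emph{not} pass to $\mathcal{A}^+$ by bookkeeping: since spectral radius is not translation-equivariant, knowing $\nu_{\mathcal{A}}(b)=\nu_{\mathcal{B}}(b)$ for all $b\in\mathcal{A}$ says nothing immediate about $\nu_{\mathcal{A}^+}(b+\beta 1)$ versus $\nu_{\mathcal{B}^+}(b+\beta 1)$; the two spectra could a priori have quite different shapes with the same radius. (The extension is in fact true, but proving it amounts to a polynomial-hull argument --- apply $(iii)$ to $q(b)^n-q(0)^n\in\mathcal{A}$ for polynomials $q$ and let $n\to\infty$ --- which is not ``routine''.) This is not a pedantic case distinction: in the paper's application $\mathcal{A}=L^\Phi(G,\omega)$ and $\mathcal{B}=L^1(G)$ are both non-unital for non-discrete $G$, so the non-unital case is the one actually used. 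The cleanest repair stays inside $\mathcal{A}$: if $\lambda=0$ then $\lambda\in\sigma_{\mathcal{B}}(a)$ automatically in the non-unital case; if $\lambda\neq 0$, replace $(a-\mu_n)^{-1}$ by $c_n:=a(a-\mu_n)^{-1}$, which lies in $\mathcal{A}$ because $\mathcal{A}$ is an ideal in $\mathcal{A}^+$. Spectral mapping with $z\mapsto z/(z-\mu_n)$ gives
\begin{displaymath}
\nu_{\mathcal{A}}(c_n)\;\geq\;\frac{|z_n|}{|z_n-\mu_n|}\;\longrightarrow\;\infty,
\end{displaymath}
where $z_n\in\sigma_{\mathcal{A}}(a)$ is a point closest to $\mu_n$ (so $z_n\to\lambda\neq 0$), while if $\lambda\in\rho_{\mathcal{B}}(a)$ then $c_n\to a(a-\lambda)^{-1}$ in $\mathcal{B}$, so $\nu_{\mathcal{B}}(c_n)\leq\|c_n\|_{\mathcal{B}}$ stays bounded, contradicting $(iii)$ applied to $c_n\in\mathcal{A}$. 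With this substitution your limit argument placing $\lambda$ on $\partial\sigma_{\mathcal{B}}(a)$, and the rest of the proof, go through as written.
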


We now give the proof of Theorem \ref{thm:2}.\\

\noindent\textit{Proof of Theorem \ref{thm:2}.}
First we prove $(i)$. By \cite[Remark 4.10]{SW20}, $L^1(G)$ is quasi-Hermitian (i.e.\ $\sigma_{L^1(G)}(f) \subseteq \mathbb{R}$ for every self-adjoint $f \in C_c(G)$) since every compactly-generated subgroup of $G$ is compact. Then, for a self-adjoint $f \in C_c(G) \subseteq L^\Phi(G,\omega)$, by Lemma \ref{lem:spec} and Theorem \ref{thm:1}, we have that $\partial\sigma_{L^\Phi(G,\omega)}(f) \subseteq \sigma_{L^1(G)}(f) \subseteq \mathbb{R}$. But this implies that $\sigma_{L^\Phi(G,\omega)}(f) \subseteq \mathbb{R}$ which completes the proof. Part $(ii)$ is just a consequence of Proposition \ref{prop:Orc2}. \qed

Since every nilpotent group is Hermitian, it follows from Theorem \ref{thm:2} that every weighted Orlicz $*$-algebra on a nilpotent locally elliptic group with respect to a sub-additive weight is Hermitian. It should be noted, however, that not every locally elliptic group is Hermitian. Indeed, there exists the following result.

\begin{thm}\cite{Hul80}
There exists a class 2 solvable locally finite group $\Gamma$ such that $\ell^1(\Gamma)$ is not Hermitian.
\end{thm}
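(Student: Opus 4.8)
The plan is to reduce the statement, via Proposition \ref{prop:sym}, to producing a discrete group $\Gamma$ of the required type together with a self-adjoint element $x = x^* \in \ell^1(\Gamma)$ whose spectrum $\sigma_{\ell^1(\Gamma)}(x)$ contains a non-real point. The first thing to observe is that such an $x$ must have infinite support. Indeed, since $\Gamma$ is locally finite it is quasi-Hermitian (this is the discrete case of Theorem \ref{thm:2}(i), ultimately \cite{SW20}), so every self-adjoint $f$ in $C_c(\Gamma) = \bigcup_n \ell^1(\Gamma_n)$ already satisfies $\sigma_{\ell^1(\Gamma)}(f) \subseteq \mathbb{R}$. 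The non-real spectrum must therefore come from a genuinely infinitely supported element, through the strict inclusion $\sigma_{C^*(\Gamma)}(x) \subsetneq \sigma_{\ell^1(\Gamma)}(x)$: because $\Gamma$ is amenable, $C^*(\Gamma)$ sees only real spectrum on self-adjoint elements, so everything hinges on the resolvent $(x-z)^{-1}$ failing to lie in $\ell^1(\Gamma)$ for some non-real $z$ even though it exists in $C^*(\Gamma)$.

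For the construction I would take $\Gamma = A \rtimes H$ to be a metabelian group (derived length two, hence class $2$ solvable) realized as an increasing union $\bigcup_n (A_n \rtimes H_n)$ of finite metabelian groups, with $A = \bigcup_n A_n$ and $H = \bigcup_n H_n$ both abelian and locally finite. The crucial design requirement is that the dual action of $H$ on $\widehat{A}$ admit an orbit $O = H \cdot \chi_0$ along which translation spreads out geometrically. The model to imitate is a finitely generated metabelian group of exponential growth (for instance a lamplighter-type group or a solvable Baumslag--Solitar group), which fails to be Hermitian precisely because it lacks polynomial growth \cite{Lep76}; the idea is to \emph{localize} such a model group, replacing its infinite-order generators by compatible sequences of finite-order elements (for example $\mathbb{Z}$ by $\bigcup_n \mathbb{Z}/p^n$ or a Pr\"ufer group), so that the combinatorics of the orbit $O$ limit to those of the non-Hermitian model while every finitely generated subgroup stays finite. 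A fixed self-adjoint convolver $x = \delta_h + \delta_{h^{-1}} + \lambda\,\delta_a + \overline{\lambda}\,\delta_{a^{-1}}$ then acts along $O$ as a weighted Jacobi/shift operator whose off-diagonal weights grow geometrically because of the spreading action.

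The crux is to show that for a suitable non-real $z_0$ the formal resolvent of $x-z_0$ has coefficients that decay only like a geometric weight $\omega$, so that it lies in a weighted algebra $\ell^1(\Gamma,\omega)$ but \emph{not} in $\ell^1(\Gamma)$ itself; this forces $x - z_0$ to be non-invertible in $\ell^1(\Gamma)$, i.e.\ $z_0 \in \sigma_{\ell^1(\Gamma)}(x)$, and Proposition \ref{prop:sym} then gives non-Hermitianness. I expect this non-invertibility estimate to be the main obstacle. Bounded representations are of no help here: since $\Gamma$ is amenable, hence unitarizable, every bounded algebra homomorphism of $\ell^1(\Gamma)$ is similar to a $*$-representation and so sees only real spectrum, meaning $z_0$ cannot be exhibited by evaluating at any representation. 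Instead $z_0 \in \sigma_{\ell^1(\Gamma)}(x)$ must be certified directly by a Wiener-type growth argument on the support of the resolvent, and the entire construction has to be tuned so that the required geometric growth is realized by an orbit geometry compatible with both local finiteness and the metabelian structure. Once the building blocks are fixed, checking that $\Gamma$ is an increasing union of finite groups (hence locally finite) and that $A$ is an abelian normal subgroup with $\Gamma/A \cong H$ abelian (hence $\Gamma$ is class $2$ solvable) is routine.
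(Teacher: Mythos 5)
Your reduction is fine as far as it goes: by Proposition \ref{prop:sym} it suffices to produce a self-adjoint $x \in \ell^1(\Gamma)$ with non-real spectrum, and your observation that any such $x$ must have infinite support (via quasi-Hermitianness of locally finite groups, Theorem \ref{thm:2}(i) and \cite{SW20}) is correct. But the proposal stops exactly where the theorem begins. The entire mathematical content of the statement is the existence of a non-real point of $\sigma_{\ell^1(\Gamma)}(x)$ for some concrete $x$, and you explicitly defer this (``the main obstacle'') without indicating how the required estimate would be obtained. The sufficiency of your criterion is not in doubt: if $(x-z_0)^{-1}$, which exists in $C^*(\Gamma)$ because $x=x^*$ and $z_0 \notin \mathbb{R}$, has non-summable coefficients, then uniqueness of inverses in $C^*(\Gamma) \supseteq \ell^1(\Gamma)$ gives $z_0 \in \sigma_{\ell^1(\Gamma)}(x)$. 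Proving non-summability, however, is the whole difficulty, and your guiding heuristic cannot simply transfer from the finitely generated exponential-growth model. A locally finite group has no elements of infinite order, hence contains no free subsemigroups and has no exponential word growth with respect to any finite set; indeed the paper notes that locally elliptic groups have polynomial growth. So the mechanisms responsible for non-Hermitianness of finitely generated solvable groups of exponential growth (Jenkins's free-subsemigroup criterion, the growth dichotomy behind \cite{Lep76}) are unavailable by design, and identifying a substitute that survives your ``localization'' is precisely what Hulanicki's proof accomplishes and what is missing here. (A smaller imprecision: Dixmier unitarizability applies to uniformly bounded Hilbert-space representations, whereas $\sigma_{\ell^1(\Gamma)}(x)$ is detected by algebraically irreducible Banach-space representations, cf.\ Proposition \ref{prop:sym}(iii); that is exactly why non-real spectrum is compatible with amenability in the first place.)

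For comparison: the paper does not prove this theorem at all; it cites \cite{Hul80}, and in Section \ref{sec:8} it describes the witness, the Leptin--Hulanicki group $G = \bigl(\bigoplus_{h \in H} H\bigr) \rtimes H$ with $H = \bigoplus_{\mathbb{N}} \mathbb{Z}/2\mathbb{Z}$, a locally finite wreath-type (hence class $2$ solvable) group. So your instinct to ``localize a lamplighter'' does land on essentially the right group, and that part of your plan is consistent with the known example. Hulanicki's actual non-symmetry argument, though, proceeds through Leptin's theory of covariance algebras (``Leptin algebras'') and nonsynthesis phenomena for invariant sets, not through direct resolvent-coefficient estimates for a fixed Jacobi-type convolver; the route you describe remains a plan whose decisive step is unexecuted, so the proposal does not constitute a proof.
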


We give the definition of two different non-Hermitian locally finite groups in Section \ref{sec:8} of this article, including the group from \cite{Hul80}.

If $\Gamma$ is the group in the theorem and $A$ some non-discrete abelian locally elliptic group (e.g. $\mathbb{Q}_p$), then $A \times \Gamma$ is a non-discrete class 2 solvable locally elliptic group, and it is not Hermitian since $\Gamma$ is a non-Hermitian quotient of $A \times \Gamma$. In particular, we have the following corollary.

\begin{cor}
There exist (infinitely many) non-discrete class 2 solvable locally elliptic groups which are not Hermitian.
\end{cor}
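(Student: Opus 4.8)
The plan is to realise the desired groups as direct products $G = A \times \Gamma$, where $\Gamma$ is the class $2$ solvable locally finite (hence discrete) group of Hulanicki with $\ell^1(\Gamma)$ non-Hermitian, and $A$ is a non-discrete abelian locally elliptic group such as $\mathbb{Q}_p$. First I would verify the three structural claims. Non-discreteness of $G$ is immediate from that of $A$. For solvability, since $A$ is abelian one has $G' = A' \times \Gamma' = \{e\} \times \Gamma'$, and hence $G'' = \{e\} \times \Gamma'' = \{e\}$ while $G' \neq \{e\}$, so $G$ has derived length exactly $2$. Local ellipticity follows by writing $A = \bigcup_i M_i$ and $\Gamma = \bigcup_i N_i$ as increasing unions of compact open subgroups (automatic for the locally finite $\Gamma$, and $\mathbb{Q}_p = \bigcup_i p^{-i}\mathbb{Z}_p$), so that $G = \bigcup_i (M_i \times N_i)$ is an increasing union of compact open subgroups.

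The key step is to show that $G$ is not Hermitian, and here the main point is the permanence of the Hermitian property under quotients. The coordinate projection $G \twoheadrightarrow \Gamma$ is a continuous surjective homomorphism with closed kernel $A \times \{e\}$, and integration along the cosets of this kernel yields a surjective $*$-homomorphism $L^1(G) \twoheadrightarrow L^1(\Gamma) = \ell^1(\Gamma)$. I would then record the standard fact that a quotient $*$-algebra of a Hermitian Banach $*$-algebra is again Hermitian: given a self-adjoint element $b$ of the quotient, lift it to a self-adjoint element $a$ upstairs (replace any lift $a_0$ by $(a_0+a_0^*)/2$), and use that $\sigma(b) = \sigma(\varphi(a)) \subseteq \sigma(a) \subseteq \mathbb{R}$ for a $*$-homomorphism $\varphi$. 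Were $G$ Hermitian, $L^1(G)$ would be Hermitian, forcing its quotient $\ell^1(\Gamma)$ to be Hermitian as well, contrary to the choice of $\Gamma$. Hence $G$ is not Hermitian. I expect this permanence-under-quotients fact to be the only genuinely non-elementary ingredient, and it is the step I would state most carefully.

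Finally, to obtain infinitely many pairwise non-isomorphic examples I would let $A$ range over $\mathbb{Q}_p$ for distinct primes $p$ and argue that $G_p := \mathbb{Q}_p \times \Gamma$ and $G_q := \mathbb{Q}_q \times \Gamma$ are non-isomorphic when $p \neq q$. Since $\Gamma$ is a torsion group while $\mathbb{Q}_p$ is torsion-free, the set of torsion elements of $G_p$ is exactly the closed (characteristic) subgroup $\{0\} \times \Gamma$; any topological isomorphism $G_p \to G_q$ must preserve element orders, hence carry this subgroup onto its counterpart and descend to a topological isomorphism of quotients $\mathbb{Q}_p \cong \mathbb{Q}_q$. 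As the indices of nested compact open subgroups of $\mathbb{Q}_p$ are exactly the powers of $p$, the isomorphism type of $\mathbb{Q}_p$ determines $p$, so $\mathbb{Q}_p \cong \mathbb{Q}_q$ forces $p = q$. Thus the groups $G_p$ are pairwise non-isomorphic, yielding infinitely many examples and completing the proof.
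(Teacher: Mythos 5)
Your proposal is correct and follows essentially the same route as the paper: the paper also takes $G = A \times \Gamma$ with $\Gamma$ the (Leptin--)Hulanicki class 2 solvable locally finite group and $A$ a non-discrete abelian locally elliptic group such as $\mathbb{Q}_p$, and concludes non-Hermitianness because $\Gamma$ is a non-Hermitian quotient of $G$. You merely fill in details the paper leaves implicit, namely the permanence of Hermitianness under the surjective $*$-homomorphism $L^1(G) \twoheadrightarrow \ell^1(\Gamma)$ and the torsion-subgroup argument distinguishing the groups $\mathbb{Q}_p \times \Gamma$ for distinct primes $p$.
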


Any group of the form $A \times \Gamma$ has non-trivial torsion elements since $\Gamma$ is locally-finite. Understanding what happens in the case of torsion-free solvable locally elliptic groups would be an interesting pursuit. We pose this as an open question.

\begin{que}
Does there exist a torsion-free class 2 solvable locally elliptic group which is not Hermitian?
\end{que}

Also, it would be interesting to have characterisations of Hermitianness for $L^1(G,\omega)$ which are independent of the Hermitian property of $L^1(G)$, when $G$ is a locally elliptic group. Although we cannot say anything in general at the moment, we note the following result and an immediate corollary of it.

\begin{thm}\label{thm:Pyt1}\cite[Theorem 1]{Pyt82}
Let $G$ be a locally compact group and $\omega$ a sub-additive weight on $G$. Suppose that there exists $p \in (0,\infty)$ such that $\omega^{-1} \in L^p(G)$. Then $L^1(G,\omega)$ is Hermitian.
\end{thm}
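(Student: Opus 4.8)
The plan is to realise $L^1(G,\omega)$ as a densely and continuously embedded $*$-subalgebra of the $C^*$-algebra $C^*(G)$ and to apply Lemma \ref{lem:spec} with $\mathcal{A}=L^1(G,\omega)$ and $\mathcal{B}=C^*(G)$. First I would record the structural preliminaries: since $\omega\ge 1$, one has the contractive inclusions $L^1(G,\omega)\subseteq L^1(G)\subseteq C^*(G)$, so $\norm{a}_{C^*(G)}\le\norm{a}_{L^1(G)}\le\norm{a}_{L^1(G,\omega)}$; moreover $C_c(G)$ is a common dense $*$-subalgebra, whence the enveloping $C^*$-algebra of $L^1(G,\omega)$ is exactly $C^*(G)$. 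The two algebras are either both unital (when $G$ is discrete) or both non-unital, and each possesses an approximate identity (Proposition \ref{prop:Orc3}(ii)), so the hypotheses of Lemma \ref{lem:spec} are met. As $C^*(G)$ is a $C^*$-algebra it is Hermitian, so by the final clause of Lemma \ref{lem:spec} it suffices to establish condition (iii), namely
\begin{displaymath} \nu_{L^1(G,\omega)}(a)=\nu_{C^*(G)}(a)\qquad\text{for all }a\in L^1(G,\omega). \end{displaymath}
Equality of spectra then follows, and in particular every self-adjoint element of $L^1(G,\omega)$ has real spectrum, yielding Hermitianness via Proposition \ref{prop:sym}.

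One inequality is immediate: the norm comparison gives $\norm{a^{*n}}_{C^*(G)}\le\norm{a^{*n}}_{L^1(G,\omega)}$ for every $n$, hence $\nu_{C^*(G)}(a)\le\nu_{L^1(G,\omega)}(a)$. The whole content of the theorem is therefore the reverse inequality $\nu_{L^1(G,\omega)}(a)\le\nu_{C^*(G)}(a)$, i.e.\ that passing to the weighted algebra does not enlarge spectral radii, and this is where the two hypotheses on $\omega$ enter. Sub-additivity forces the weight to grow only sub-exponentially along powers — concretely $\lim_{n\to\infty}\omega(x^n)^{1/n}=1$, the GRS condition — which is the qualitative feature necessary for such spectral-radius rigidity; the integrability hypothesis $\omega^{-1}\in L^p(G)$ supplies the quantitative control. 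The route I would take is a resolvent argument: fix $\lambda\in\mathbb{C}$ with $\lvert\lambda\rvert>\nu_{C^*(G)}(a)$, so that $(\lambda-a)^{-1}$ exists in $C^*(G)$ and is given by the norm-convergent series $\sum_{n\ge 0}\lambda^{-n-1}a^{*n}$, and then show this series converges in the stronger norm of $L^1(G,\omega)$, which places $(\lambda-a)^{-1}$ in $L^1(G,\omega)$ and gives $\lvert\lambda\rvert\ge\nu_{L^1(G,\omega)}(a)$. The estimate to establish is a bound of the form $\norm{a^{*n}}_{L^1(G,\omega)}\le D(n)\,\nu_{C^*(G)}(a)^n$ with $D(n)^{1/n}\to 1$, obtained by splitting the weight through Hölder's inequality against $\omega^{-1}\in L^p$ and dominating the resulting unweighted factor by the operator norm on $L^2(G)$, while sub-additivity keeps the accumulated weight factors sub-exponential in $n$.

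The main obstacle is precisely this last estimate: reconciling the weighted $L^1$-norm of the high convolution powers $a^{*n}$ with their $C^*$-norm so that the correction factor $D(n)$ is genuinely sub-exponential. It is worth noting that sub-additivity \emph{alone} already yields $\nu_{L^1(G,\omega)}(a)=\nu_{L^1(G)}(a)$, by distributing the weight over an $n$-fold convolution and taking roots along a block decomposition $a^{*mk}=(a^{*m})^{*k}$; but this is insufficient, since $L^1(G)$ need not be Hermitian. It is therefore the interaction of the integrability condition with the sub-exponential weight growth that must be exploited to descend all the way to the $C^*$-norm, and making this interpolation tight — controlling the constants uniformly as the number of convolution factors grows — is the technical heart of the argument and the step I would expect to demand the most care.
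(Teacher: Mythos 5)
The first thing to note is that the paper does not prove this statement at all: it is imported verbatim from Pytlik \cite[Theorem 1]{Pyt82}, and the engine of Pytlik's actual proof is what the paper quotes separately as Lemma \ref{lem:Pyt2} — for self-adjoint $f \in L^1(G,\omega)\cap L^2(G)$ one proves $\norm{u(inf)}_{L^1(G,\omega)} = O(e^{n^\gamma})$ with $\gamma<1$, where $u(inf)=e^{inf}-1$; by spectral mapping, a spectral point $\alpha+i\beta$ of $f$ with $\beta\neq 0$ would force $\nu(e^{inf})\geq e^{\lvert n\beta\rvert}$ along a subsequence, contradicting sub-exponential growth, so self-adjoint elements of the dense $*$-ideal $L^1(G,\omega)\cap L^2(G)$ have real spectrum, and one then extends to all of $L^1(G,\omega)$. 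Your reduction is different: you invoke Lemma \ref{lem:spec} with $\mathcal{B}=C^*(G)$ and aim at $\nu_{L^1(G,\omega)}(a)=\nu_{C^*(G)}(a)$ for all $a$. The structural preliminaries you list are fine, but be aware that this is essentially a reformulation rather than a reduction: by Pt\'ak's spectral invariance theorem for Hermitian Banach $*$-algebras, that spectral-radius identity is equivalent to the theorem, so no work has been discharged by setting it up.

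The genuine gap is the step you yourself flag as the technical heart: the bound $\norm{a^{*n}}_{L^1(G,\omega)}\leq D(n)\,\nu_{C^*(G)}(a)^n$ with $D(n)^{1/n}\to 1$ is stated as a goal, never proved, and — more seriously — the mechanism you propose for it cannot produce it. Distributing the sub-additive weight over an $n$-fold convolution is a pointwise estimate; after dyadic splitting it gives
\begin{displaymath}
\lvert a^{*n}(x)\rvert\,\omega(x)\;\leq\; P(n)\sum_{i=1}^{n}\bigl(\lvert a\rvert^{*(i-1)} * (\lvert a\rvert\omega) * \lvert a\rvert^{*(n-i)}\bigr)(x)
\end{displaymath}
with $P(n)$ polynomial, i.e.\ it forces you to replace $a$ by $\lvert a\rvert$. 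After H\"older against $\omega^{-1}\in L^p$ and domination by the operator norm on $L^2(G)$, the constant you can extract per convolution factor is $\norm{\lambda(\lvert a\rvert)}_{\mathcal{B}(L^2(G))}$ ($\lambda$ the left regular representation), not $\nu_{C^*(G)}(a)$. But the hypotheses of the theorem force $G$ to have polynomial growth (a sub-additive $\omega$ is polynomially bounded on word balls, so $\omega^{-1}\in L^p$ forces polynomial volume growth), hence $G$ is amenable, and for amenable $G$ and $h\geq 0$ one has $\norm{\lambda(h)}_{\mathcal{B}(L^2(G))}=\norm{h}_{L^1(G)}$ since the trivial representation is weakly contained in $\lambda$. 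So this route yields only $\nu_{L^1(G,\omega)}(a)\leq\norm{a}_{L^1(G)}$, hence after your blocking trick $\nu_{L^1(G,\omega)}(a)\leq\nu_{L^1(G)}(a)$ — exactly what sub-additivity alone gives, and which you correctly observe is insufficient because $L^1(G)$ need not be Hermitian. The loss is visible already on $G=\mathbb{Z}$: for the self-adjoint element $a=\delta_1+\delta_{-1}+i\delta_2-i\delta_{-2}$ one has $\nu_{C^*(\mathbb{Z})}(a)=\max_\theta\lvert 2\cos\theta-2\sin 2\theta\rvert<4=\norm{\lambda(\lvert a\rvert)}$. The cancellation separating the $C^*$-spectral radius from $L^1$-type quantities is destroyed the moment absolute values are taken, and every pointwise weight-distribution argument takes them. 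Pytlik's exponential route exists precisely to avoid this: for self-adjoint $f$ the operator $e^{in\lambda(f)}$ is unitary on $L^2(G)$, uniformly bounded in $n$ with cancellation intact, and the iteration identity $u(nf)=nu(f)+\sum_{k<n}u(kf)*u(f)$ (the paper's Lemma \ref{lem:lpineq}) converts that into the sub-exponential weighted bound. As written, your proposal's central step is both missing and unreachable by the stated method; it would need to be rebuilt around such a cancellation-preserving device.
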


\begin{cor}
Let $G = \bigcup_{i=1}^\infty K_i$ be a locally elliptic group and let $\mu$ denote a left Haar measure on $G$. Suppose that $\bold{a} := (a_i)_{i=1}^\infty \subset \mathbb{R}_{\ge 0}$ is a non-decreasing sequence such that there exists $p \in (0,\infty)$ with
\begin{displaymath} \frac{\mu(K_1)}{a_1^p} + \sum_{i=2}^\infty \frac{\mu(K_i \setminus K_{i-1})}{a_i^p} < \infty. \end{displaymath}
Then $L^1(G,\omega_\bold{a})$ is Hermitian.
\end{cor}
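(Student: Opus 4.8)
The plan is to recognise this corollary as a direct specialisation of Pytlik's theorem (Theorem \ref{thm:Pyt1}) to the concrete weights $\omega_{\mathbf{a}}$ introduced in Section \ref{sec:3}. Since $\mathbf{a} = (a_i)_{i=1}^\infty$ is non-decreasing, the discussion following the definition of $\omega_{\mathbf{a}}$ already shows that $\omega_{\mathbf{a}}$ is a sub-additive weight on $G$; in particular it is sub-multiplicative, symmetric, bounded on compact sets, and even satisfies the stronger inequality $\omega_{\mathbf{a}}(xy) \le \max\{\omega_{\mathbf{a}}(x),\omega_{\mathbf{a}}(y)\}$. To ensure $\omega_{\mathbf{a}}$ genuinely takes values in $[1,\infty)$, one may assume without loss of generality that $a_1 \ge 1$: replacing each $a_i$ by $\max\{a_i,1\}$ keeps the sequence non-decreasing, only shrinks the series appearing in the hypothesis, and only enlarges $\omega_{\mathbf{a}}$ to a weight bounded below by $1$. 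Thus the part of the hypotheses of Theorem \ref{thm:Pyt1} asserting that $\omega_{\mathbf{a}}$ is a sub-additive weight is met, and it remains only to verify the integrability condition $\omega_{\mathbf{a}}^{-1} \in L^p(G)$ for the given exponent $p$.

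First I would write the reciprocal weight explicitly. Since $G = K_1 \sqcup \bigsqcup_{i=2}^\infty (K_i \setminus K_{i-1})$ is a disjoint decomposition of $G$ and $\omega_{\mathbf{a}}$ is constant (equal to $a_i$) on each piece, we have
\begin{displaymath} \omega_{\mathbf{a}}^{-1} = \frac{1}{a_1}\chi_{K_1} + \sum_{i=2}^\infty \frac{1}{a_i}\chi_{K_i \setminus K_{i-1}}. \end{displaymath}
Next I would compute the $L^p$-norm by integrating over each piece of the partition separately, exactly as in the proof of Proposition \ref{prop:polyw1}, obtaining
\begin{displaymath} \int_G \big(\omega_{\mathbf{a}}^{-1}(x)\big)^p \, d\mu(x) = \frac{\mu(K_1)}{a_1^p} + \sum_{i=2}^\infty \frac{\mu(K_i \setminus K_{i-1})}{a_i^p}, \end{displaymath}
which is finite precisely by the hypothesis of the corollary. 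Hence $\omega_{\mathbf{a}}^{-1} \in L^p(G)$. Applying Theorem \ref{thm:Pyt1} with $\omega = \omega_{\mathbf{a}}$ then yields that $L^1(G,\omega_{\mathbf{a}})$ is Hermitian.

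There is no serious obstacle here: the argument is a routine verification of the hypotheses of Theorem \ref{thm:Pyt1}, the only genuine content being the elementary evaluation of $\norm{\omega_{\mathbf{a}}^{-1}}_{L^p(G)}^p$ via the disjointness of the sets $K_i \setminus K_{i-1}$. The sole point requiring minor care is the normalisation ensuring $\omega_{\mathbf{a}} \ge 1$, so that it is a bona fide weight; this is handled by the harmless adjustment described above, after which the conclusion is immediate.
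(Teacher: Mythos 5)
Your proof is correct and is essentially identical to the paper's own argument: both write $\omega_{\mathbf{a}}^{-1}$ explicitly on the partition $K_1 \sqcup \bigsqcup_{i\ge 2}(K_i\setminus K_{i-1})$, compute $\norm{\omega_{\mathbf{a}}^{-1}}_{L^p(G)}^p$ as exactly the series in the hypothesis, and conclude via Theorem \ref{thm:Pyt1}. Your extra normalisation step (replacing $a_i$ by $\max\{a_i,1\}$, needed because the statement allows $\mathbf{a}\subset\mathbb{R}_{\ge 0}$ while the definition of $\omega_{\mathbf{a}}$ requires values in $\mathbb{R}_{\ge 1}$) addresses a point the paper silently skips, and it is harmless since finiteness of the series forces $a_1>0$, so the modified weight is equivalent to the original and the two weighted algebras coincide with equivalent norms.
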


\begin{proof}
Indeed, since
\begin{displaymath} \omega_{\bf{a}}^{-1} = \frac{\chi_{K_1}}{a_1} + \sum_{i=2}^\infty  \frac{\chi_{K_i \setminus K_{i-1}}}{a_i} \end{displaymath}
and
\begin{align*} &\norm{\omega_{\bf{a}}^{-1}}_{L^p(G)}^p  = \int_G  \lvert \omega_{\bf{a}}^{-1}(x) \rvert^p \: d\mu(x) \\
&= \int_{K_1} \lvert \omega_{\bf{a}}^{-1}(x) \rvert^p \: d\mu(x) + \sum_{i=2}^\infty \int_{K_i \setminus K_{i-1}} \lvert \omega_{\bf{a}}^{-1}(x) \rvert^p \: d\mu(x) \\    
&= \frac{\mu(K_1)}{a_1^p} + \sum_{i=2}^\infty \frac{\mu(K_i \setminus K_{i-1})}{a_i^p}, \\
\end{align*}
the result follows directly from these calculations and Theorem \ref{thm:Pyt1}.
\end{proof}

We now note some other assumptions that imply Hermitianness for weighted Orlicz (or $L^1$) $*$-algebras on locally elliptic groups that do not require the weight to be sub-additive. First, we give the following definition.

\begin{dfn}
Let $G$ be a locally elliptic group and $\omega$ a weight on $G$. Suppose that there exist compact open subgroups $K_i \le G$ ($i \in \mathbb{N} \cup \{0\}$) such that $K_i \le K_{i+1}$ for each $i$, $G = \bigcup_{i=0}^\infty K_i$, and the weight
\begin{displaymath} \omega'(n) := \max_{x \in K_{\lvert n \rvert}} \omega(x) \; \; \; (n \in \mathbb{Z})  \end{displaymath}
on $\mathbb{Z}$ satisfies the GRS condition. Then we say that $\omega$ satisfies the \hlight{uniform GRS condition}.
\end{dfn}

We now prove the following lemma. The proof is basically identical to \cite[Theorem 3.4]{FGL06} with some small modifications.

\begin{lem}\label{lem:unigrs}
Let $G$ be a locally elliptic group and $\omega$ a weight on $G$ that satisfies the uniform GRS condition. Then, for all $f \in L^1(G,\omega)$, $\sigma_{L^1(G,\omega)}(f) = \sigma_{L^1(G)}(f)$.
\end{lem}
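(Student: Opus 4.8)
The plan is to show that $\nu_{L^1(G,\omega)}(f) = \nu_{L^1(G)}(f)$ for all $f \in L^1(G,\omega)$ and then invoke Lemma \ref{lem:spec}, using that $L^1(G)$ is Hermitian is not needed here since we want equality of spectra directly; instead I would run the argument of \cite[Theorem 3.4]{FGL06} adapted to the locally elliptic setting. First I would reduce to comparing spectral radii. Since $L^1(G,\omega) \subseteq L^1(G)$ is a nested pair of Banach $*$-algebras (both without identity, as $G$ is non-compact), by Lemma \ref{lem:spec} it suffices to prove condition (iii), namely $\nu_{L^1(G,\omega)}(f) = \nu_{L^1(G)}(f)$ for all $f$. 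One inequality, $\nu_{L^1(G)}(f) \le \nu_{L^1(G,\omega)}(f)$, is immediate from $\norm{\cdot}_{L^1(G)} \le \norm{\cdot}_{L^1(G,\omega)}$. To upgrade from equality of spectral radii to equality of full spectra, I would note that $L^1(G,\omega)$ is Hermitian: this follows because $L^1(G)$ is quasi-Hermitian (by \cite[Remark 4.10]{SW20}, every compactly generated subgroup being compact) and the uniform GRS condition transfers Hermitianness, or more directly I would establish Hermitianness of $L^1(G,\omega)$ as part of the estimate below and then apply the equivalence of (iii) and (iv) in Lemma \ref{lem:spec}.

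The heart of the matter is the reverse inequality $\nu_{L^1(G,\omega)}(f) \le \nu_{L^1(G)}(f)$. Following \cite[Theorem 3.4]{FGL06}, the key is to dominate the weighted norm of a high convolution power $f^{*n}$ in terms of the unweighted norm of $f^{*n}$ times a sub-exponentially growing factor coming from the weight. First I would fix the sequence of compact open subgroups $(K_i)_{i \in \mathbb{N} \cup \{0\}}$ witnessing the uniform GRS condition, and recall that $\omega'(n) = \max_{x \in K_{\lvert n \rvert}} \omega(x)$ defines a weight on $\mathbb{Z}$ satisfying $\lim_{n \to \infty} \omega'(n)^{1/n} = 1$. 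The crucial structural observation for locally elliptic groups is that a convolution $f^{*n}$, where $f$ is supported on some $K_m$, remains supported on $K_m$, because each $K_i$ is a \emph{subgroup}: convolution does not spread support outside $K_m$. This is precisely the feature that makes the locally elliptic case cleaner than the compactly generated case, where supports grow with $n$.

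The strategy for a general $f \in L^1(G,\omega)$ is then to approximate by functions of compact support (which lie in some $K_m$) and to control the weight on $K_m$ by $\omega'(m)$. Concretely, I would write, for $f$ supported in $K_m$, the bound $\norm{f^{*n}}_{L^1(G,\omega)} \le \omega'(m) \norm{f^{*n}}_{L^1(G)}$, since $\omega \le \omega'(m)$ pointwise on $K_m$ and the support of $f^{*n}$ stays in $K_m$. Taking $n$-th roots and letting $n \to \infty$ gives $\nu_{L^1(G,\omega)}(f) \le \nu_{L^1(G)}(f)$ directly, because the constant factor $\omega'(m)^{1/n} \to 1$; here the GRS condition is what guarantees the approximation can be made uniform when passing to general $f$ by a density argument, controlling the weighted norms of tails via the sub-exponential growth of $\omega'$. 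I expect the main obstacle to be the density and limiting argument that reduces the general case to the compactly supported one: one must ensure that the approximation of $f$ by compactly supported functions interacts well with the spectral radius (which is not a continuous function of $f$ in general), so the clean support-preservation property of subgroups together with the GRS estimate on $\omega'$ must be combined carefully, exactly as in the delicate part of the proof of \cite[Theorem 3.4]{FGL06}.
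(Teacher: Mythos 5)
Your proof is complete only for compactly supported $f$, and the step you defer --- extending to general $f \in L^1(G,\omega)$ by a ``density and limiting argument'' --- is a genuine gap; in fact it is the entire content of the lemma. Three concrete points. First, your compactly supported argument never uses the (uniform) GRS condition: if $\supp(f) \subseteq K_m$, then $\norm{f^{*n}}_{L^1(G,\omega)} \le \omega'(m)\norm{f^{*n}}_{L^1(G)}$ holds for an \emph{arbitrary} weight (this is essentially the estimate behind quasi-Hermitianness, Theorem \ref{thm:1}), so any correct proof must invoke GRS precisely in the step you skip. Second, no limiting argument of the kind you describe can work: the spectral radius is only upper semicontinuous, $L^1(G,\omega)$ is noncommutative (so $\nu$ is not even subadditive or Lipschitz), and $f$ need not commute with its truncations $f\chi_{K_m}$; thus knowing $\nu_{L^1(G,\omega)}(f_k) = \nu_{L^1(G)}(f_k)$ for compactly supported $f_k \to f$ gives no control on $\nu_{L^1(G,\omega)}(f)$. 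Third, contrary to your description, the proof of \cite[Theorem 3.4]{FGL06} is not an approximation argument but a direct estimate for arbitrary $f$, and that is what the paper does: setting $a_i := \int_{K_i \setminus K_{i-1}} \lvert f(x)\rvert \, dx$, one notes that if $x_j \in K_{i_j}\setminus K_{i_j-1}$ for $j=1,\dots,n$, then $x_1\cdots x_n \in K_{\max_j i_j}$, hence $\omega(x_1\cdots x_n) \le \omega'(i_1+\cdots+i_n)$; summing over multi-indices yields
\begin{displaymath}
\norm{f^{*n}}_{L^1(G,\omega)} \le \norm{a^{*n}}_{\ell^1(\mathbb{Z},\omega')},
\end{displaymath}
and the GRS hypothesis enters through \cite[Lemma 3.3]{FGL06}, which gives $\nu_{\ell^1(\mathbb{Z},\omega')}(a) = \nu_{\ell^1(\mathbb{Z})}(a) = \norm{a}_{\ell^1(\mathbb{Z})} = \norm{f}_{L^1(G)}$. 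Hence $\nu_{L^1(G,\omega)}(f) \le \norm{f}_{L^1(G)}$ for \emph{every} $f \in L^1(G,\omega)$, and applying this to the powers $f^{*n}$ gives $\nu_{L^1(G,\omega)}(f) \le \nu_{L^1(G)}(f)$. Your support-preservation observation (products of elements of nested subgroups stay in the largest subgroup) does appear, but inside this multi-index estimate, not as a reduction to compactly supported elements.

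There is also a secondary error in your handling of the passage from spectral radii to spectra. Your claim that $L^1(G,\omega)$ is Hermitian under the lemma's hypotheses is false: the constant weight $\omega \equiv 1$ satisfies the uniform GRS condition, and there exist non-Hermitian locally elliptic groups (Section \ref{sec:8}), for which $L^1(G,\omega) = L^1(G)$ is not Hermitian; quasi-Hermitianness of $L^1(G)$ does not ``transfer'' to Hermitianness. Moreover, in Lemma \ref{lem:spec} it is Hermitianness of the \emph{larger} algebra $\mathcal{B} = L^1(G)$ that is required for condition (iv), so establishing Hermitianness of $L^1(G,\omega)$ would not help anyway. What the estimate above actually delivers is condition (iii) of Lemma \ref{lem:spec}, from which one gets $\partial\sigma_{L^1(G,\omega)}(f) \subseteq \sigma_{L^1(G)}(f) \subseteq \sigma_{L^1(G,\omega)}(f)$; this is also how the paper concludes, and in the application of the lemma (the theorem following it) $G$ is additionally assumed Hermitian.
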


\begin{proof}
Let $G$ be a locally elliptic group and $\omega$ a weight on $G$ satisfying the uniform GRS condition. Let $K_i \le G$ ($i \in \mathbb{N} \cup \{0\}$) be compact open subgroups of $G$ such that $K_i \le K_{i+1}$ for each $i$, $G = \bigcup_{i=0}^\infty K_i$ and the weight on $\mathbb{Z}$ defined by
\begin{displaymath} \omega'(n) := \max_{x \in K_{\lvert n \rvert}} \omega(x) \; \; \; (n \in \mathbb{Z})  \end{displaymath}
has the GRS property.
Then, for $f \in L^1(G,\omega)$, one computes by induction that
\begin{align*} &\norm{f^{*n}}_{L^1(G,\omega)} \le \int_G \cdots \int_G \lvert f(x_1) \rvert \cdots \lvert f(x_n) \rvert \omega(x_1x_2\cdots x_n) dx_1dx_2\cdots dx_n  \\
& = \sum_{i_1,\dots,i_n=0}^\infty \int_{K_{i_1}\setminus K_{i_1-1}}  \cdots \int_{K_{i_n}\setminus K_{i_n-1}} \lvert f(x_1) \rvert  \cdots \lvert f(x_n) \rvert \omega(x_1x_2\cdots x_n) dx_1dx_2\cdots dx_n
\end{align*}
where we use the convention that $K_{-1}$ is the empty set.

Suppose that $x_j \in K_{i_j} \setminus K_{i_j-1}$ for $j =1,2,\dots,n$. Let $i_k$ be the largest index of $i_1, \dots, i_n$. Then, since $x_1 \cdots x_n \in K_{i_k}$,
\begin{displaymath} \omega(x_1\cdots x_n) \le \max_{x \in K_{i_k}} \omega(x) =\omega'(i_k) \le \omega'(i_1+ \cdots + i_n).\end{displaymath}
Then, let $a_i := \int_{K_i \setminus K_{i-1}} \lvert f(x) \rvert \:dx$ for each $i \in \mathbb{N} \cup \{0\}$ and $a := (a_n)_{n \in \mathbb{N}\cup\{0\}}$. Clearly $\norm{f}_{L^1(G)} = \norm{a}_{\ell^1(\mathbb{Z})}$.
It then follows by the previous arguments that
\begin{displaymath} \norm{f^{*n}}_{L^1(G,\omega)} \le \sum_{i_1,\dots,i_n=1}^\infty a_{i_1}\cdots a_{i_n} \omega(i_1 + \cdots + i_n) = \norm{a^{*n}}_{\ell^1(\mathbb{Z},\omega')}. \end{displaymath}
Hence, one computes that
\begin{align*} \nu_{L^1(G,\omega)}(f) = \lim_{n \rightarrow \infty} &\norm{f^{*n}}^{1/n}_{L^1(G,\omega)} \le \lim_{n \rightarrow \infty} \norm{a^{*n}}^{1/n} = \nu_{\ell^1(\mathbb{Z},\omega')}(a) \\ &= \nu_{\ell^1(\mathbb{Z})}(a) = \norm{a}_{\ell^1(\mathbb{Z})} = \norm{f}_{L^1(G)} \end{align*}
where the third and fourth equalities follow from \cite[Lemma 3.3]{FGL06}. Then 
\begin{displaymath} \nu_{L^1(G,\omega)}(f) = \lim_{n \rightarrow \infty}\nu_{L^1(G,\omega)}(f^n)^{1/n} \le \lim_{n \rightarrow \infty} \norm{f^{*n}}_{L^1(G)} = \nu_{L^1(G)}(f) \end{displaymath}
from which it follows that $\nu_{L^1(G,\omega)}(f) = \nu_{L^1(G)}(f)$. The result then follows by Lemma \ref{lem:spec}.
\end{proof}

We then have the following theorem.

\begin{thm}
Let $G$ be a Hermitian locally elliptic group and $(\Phi,\Psi)$ a complementary pair of Young functions. The following hold:
\begin{enumerate}[(i)]
   \item If $\omega$ is an $L^\Phi$-weight on $G$ with $1/\omega \in L^\Psi(G)$ and $\omega$ has bounded variation, then $L^\Phi(G,\omega)$ is Hermitian; \
   \item If $\omega$ is an arbitrary weight on $G$ satisfying the uniform GRS condition, then $L^1(G,\omega)$ is Hermitian.
\end{enumerate}
\end{thm}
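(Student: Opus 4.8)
The plan is to reduce both parts to results already established in this section, so that neither requires a fresh spectral estimate.

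For part (i), the idea is to replace $\omega$ by an equivalent sub-additive weight and then invoke Theorem \ref{thm:2}(ii). Since $\omega$ has bounded variation, Proposition \ref{prop:bvweight} (via the construction in Proposition \ref{prop:polyw1}) produces a sub-additive weight $\omega^\sharp$, built from a witnessing chain $(K_i)_{i\in\mathbb{N}}$, satisfying $\omega \le \omega^\sharp \le (C+1)\omega$ where $C = \mathrm{Var}(\omega,(K_i)_{i\in\mathbb{N}})$. First I would verify that $\omega^\sharp$ again satisfies the standing hypotheses of Theorem \ref{thm:2}. From $\omega \le \omega^\sharp$ we get $1/\omega^\sharp \le 1/\omega$ pointwise, and since $L^\Psi(G)$ is a solid Banach function space and $1/\omega \in L^\Psi(G)$, it follows that $1/\omega^\sharp \in L^\Psi(G)$. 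Combined with sub-additivity of $\omega^\sharp$, Proposition \ref{prop:Orc2} then guarantees that $L^\Phi(G,\omega^\sharp)$ is a Banach $*$-algebra, i.e.\ $\omega^\sharp$ is an $L^\Phi$-weight. Applying Theorem \ref{thm:2}(ii) to the Hermitian group $G$ and the sub-additive weight $\omega^\sharp$ then yields that $L^\Phi(G,\omega^\sharp)$ is Hermitian.

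The final step of part (i) is to transfer Hermitianness back to $L^\Phi(G,\omega)$. Because $\omega$ and $\omega^\sharp$ are equivalent weights, $L^\Phi(G,\omega)$ and $L^\Phi(G,\omega^\sharp)$ coincide as $*$-algebras and carry equivalent complete norms; hence their invertible elements, and therefore the spectra of all self-adjoint elements, agree. Since Hermitianness is exactly the condition that self-adjoint elements have real spectrum (Proposition \ref{prop:sym}), it passes from $L^\Phi(G,\omega^\sharp)$ to $L^\Phi(G,\omega)$, completing part (i).

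For part (ii) the argument is shorter and rests on Lemma \ref{lem:unigrs}. Since $\omega$ satisfies the uniform GRS condition, that lemma gives $\sigma_{L^1(G,\omega)}(f) = \sigma_{L^1(G)}(f)$ for every $f \in L^1(G,\omega)$. Now let $f \in L^1(G,\omega)$ be self-adjoint; as $L^1(G,\omega) \subseteq L^1(G)$ carries the same involution $f^*(x) = \overline{f(x^{-1})}$, the element $f$ is self-adjoint in $L^1(G)$ as well. Because $G$ is Hermitian, $L^1(G)$ is Hermitian, so $\sigma_{L^1(G)}(f) \subseteq \mathbb{R}$, and the spectral identity then forces $\sigma_{L^1(G,\omega)}(f) \subseteq \mathbb{R}$. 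By Proposition \ref{prop:sym} this shows $L^1(G,\omega)$ is Hermitian. The only genuine subtlety, and the step I would be most careful about, lies in part (i): confirming that the equivalent sub-additive replacement $\omega^\sharp$ inherits both the $L^\Phi$-weight property and $1/\omega^\sharp \in L^\Psi(G)$ so that Theorem \ref{thm:2}(ii) applies, and that Hermitianness is invariant under passing to an equivalent weight. Both are routine once one observes that $L^\Psi(G)$ is solid and that equivalent weights yield the same underlying $*$-algebra with equivalent norms, so I do not anticipate a serious obstacle.
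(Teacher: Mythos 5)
Your proposal is correct and takes essentially the same route as the paper, whose proof is the one-liner that part (i) follows from Proposition \ref{prop:bvweight} together with Theorem \ref{thm:2}, and part (ii) from Lemma \ref{lem:unigrs}. The details you supply --- solidity of $L^\Psi(G)$ giving $1/\omega^\sharp \in L^\Psi(G)$, Proposition \ref{prop:Orc2} making the equivalent sub-additive weight $\omega^\sharp$ an $L^\Phi$-weight, and the invariance of the spectrum (hence of Hermitianness) under passage to an equivalent weight --- are precisely what the paper leaves implicit.
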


\begin{proof}
Part $(i)$ follows directly from Proposition \ref{prop:bvweight} and Theorem \ref{thm:2}. Part $(ii)$ is a direct consequence of Lemma \ref{lem:unigrs}.
\end{proof}


\section{Functional calculus and the Wiener property}\label{sec:5}

In this section we prove Theorem \ref{thm:3}. The proof requires us to develop a certain (smooth) functional calculus on a total part of a weighted Orlicz $*$-algebra on a locally elliptic group. Our arguments model those used in Section 4 and Section 5 of \cite{FGLLMB03}.

Throughout this section, unless otherwise specified, we assume the hypotheses of Theorem \ref{thm:3}. In particular, $G$ will be a locally elliptic group, $(\Phi,\Psi)$ a pair of complementary Young functions and $\omega$ an $L^\Phi$-weight on $G$ with $1/\omega \in L^\Psi(G)$.

Given $f \in L^\Phi(G,\omega)$, we define the function
\begin{displaymath}  u(f) := \sum_{k=1}^\infty \frac{f^{*k}}{k!}. \end{displaymath}

Since 
\begin{displaymath} \norm{u(f)}_{L^\Phi(G,\omega)} \le \sum_{k=1}^\infty \frac{\norm{f}_{L^\Phi(G,\omega)}^k}{k!} = e^{\norm{f}_{L^\Phi(G,\omega)}} -1 \end{displaymath}
it follows that $u(f) \in L^\Phi(G,\omega)$ for all $f \in L^\Phi(G,\omega)$.

The following result of Pytlik is critical to our results.

\begin{lem}\label{lem:Pyt2}\cite[Lemma 4]{Pyt82}
Let $G$ be a locally compact group and $\omega$ a sub-additive weight on $G$. Suppose that there exists $p \in (0,\infty)$ such that $\omega^{-1} \in L^{p}(G)$. Then, for any self-adjoint $f \in L^1(G, \omega) \cap L^2(G)$ and $\gamma > \log_2((2p+2)/(p+2))$,
\begin{displaymath} \norm{u(inf)}_{L^1(G,\omega)} = O(e^{n^{\gamma}}) \text{ as $n \rightarrow \infty$}. \end{displaymath} 
\end{lem}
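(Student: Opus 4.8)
The plan is to estimate the weighted norm of the function $g_t := u(itf)$, which is the $L^1$-part of the element $e^{itf} = 1 + g_t$ of the unitization of $L^1(G)$; this exponential makes sense because $itf$ is skew-adjoint, so $e^{itf}$ is unitary and $\norm{e^{itf}}_{C^*(G)} = 1$. Specialising to $t = n$ recovers $u(inf)$. The naive series bound only gives the exponential estimate $\norm{g_n}_{L^1(G,\omega)} = O(e^{cn})$, so the crux is to beat it. I would do so by combining two ingredients along a dyadic scale: a sharp \emph{linear} growth bound on $\norm{g_t}_{L^2(G)}$ coming from unitarity, and a \emph{super}-multiplicative interpolation estimate coming from $\omega^{-1} \in L^p(G)$.

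First I would record the interpolation inequality. For $h \in L^2(G) \cap L^1(G,\omega)$ and $R \ge 1$, split $\int_G \lvert h \rvert$ over $\{\omega \le R\}$ and $\{\omega > R\}$: on the first set use Cauchy--Schwarz together with the Chebyshev bound $\lvert\{\omega \le R\}\rvert \le R^p \norm{\omega^{-1}}_{L^p}^p$, and on the second use $\int_{\{\omega > R\}} \lvert h \rvert \le R^{-1}\norm{h}_{L^1(G,\omega)}$; optimising over $R$ yields
\[ \norm{h}_{L^1(G)} \le C\,\norm{h}_{L^2(G)}^{2/(p+2)}\,\norm{h}_{L^1(G,\omega)}^{p/(p+2)}. \]
Feeding this into the $L^1$-analogue of Proposition \ref{prop:Orc2}(i) (sub-additivity of $\omega$ gives $\norm{a*a}_{L^1(G,\omega)} \le 2C\,\norm{a}_{L^1(G,\omega)}\norm{a}_{L^1(G)}$) produces the key super-multiplicative bound
\[ \norm{g_t * g_t}_{L^1(G,\omega)} \le C'\,\norm{g_t}_{L^1(G,\omega)}^{(2p+2)/(p+2)}\,\norm{g_t}_{L^2(G)}^{2/(p+2)}, \]
in which the exponent $\theta := (2p+2)/(p+2) > 1$ is exactly the source of the constant $\log_2((2p+2)/(p+2))$.

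Next I would prove the linear $L^2$-bound $\norm{g_t}_{L^2(G)} \le \lvert t \rvert\,\norm{f}_{L^2(G)}$. Since $f = f^*$ means $f(x^{-1}) = \overline{f(x)}$, the convolution powers satisfy $f^{*2j}(e) = \norm{f^{*j}}_{L^2}^2 \ge 0$, and a direct computation of $g_t^* * g_t = -g_t - g_{-t}$ gives $\norm{g_t}_{L^2}^2 = 2\sum_{j \ge 1} \frac{(-1)^{j+1} t^{2j}}{(2j)!}\norm{f^{*j}}_{L^2}^2$. Letting $\nu_f$ be the (finite, total mass $\norm{f}_{L^2}^2$) spectral measure of the bounded self-adjoint convolver $\lambda(f)$ on $L^2(G)$ against the vector $f$, so that $\int \lambda^{2j-2}\,d\nu_f = \norm{f^{*j}}_{L^2}^2$, this series sums to $2\int \frac{1 - \cos t\lambda}{\lambda^2}\,d\nu_f$, and the elementary bound $1 - \cos u \le u^2/2$ gives $\norm{g_t}_{L^2}^2 \le t^2\norm{f}_{L^2}^2$. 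This cancellation, invisible term by term, is the essential use of unitarity.

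Finally I would combine the two estimates through the doubling identity $g_{2t} = g_t * g_t + 2g_t$, obtaining $M_{2t} \le C' M_t^{\theta}\,(\lvert t \rvert\norm{f}_{L^2})^{2/(p+2)} + 2M_t$ for $M_t := \norm{g_t}_{L^1(G,\omega)}$. Specialising to $t = 2^m$ and taking logarithms turns this into a linear recursion $\log M_{2^{m+1}} \le \theta \log M_{2^m} + O(m)$, whose solution satisfies $\log M_{2^m} \le C\theta^m$ since $\theta > 1$ dominates the polynomial inhomogeneity; with $n = 2^m$ this reads $\log M_n \le C n^{\log_2 \theta}$. Passing to general $n$ by filling in between dyadic scales via $g_{t+s} = g_t * g_s + g_t + g_s$, and absorbing both the constant $C$ and the slack $\gamma - \log_2\theta > 0$ into $n^{\gamma}$, yields $\norm{u(inf)}_{L^1(G,\omega)} = O(e^{n^{\gamma}})$ for every $\gamma > \log_2((2p+2)/(p+2))$. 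I expect the main obstacle to be the sharp $L^2$-bound (where the needed cancellation is only accessible through the spectral-measure identity, not term by term) and the careful propagation of the recursion to isolate precisely this exponent; once the exponents in the interpolation and convolution steps are matched, those steps are routine.
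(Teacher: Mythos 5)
The paper does not prove this lemma at all: it is imported verbatim from Pytlik \cite[Lemma 4]{Pyt82}, and your argument is a correct reconstruction of essentially Pytlik's original proof --- the interpolation inequality extracted from $\omega^{-1} \in L^p(G)$, the linear $L^2$-bound coming from unitarity of $e^{itf}$ via the spectral theorem, and the dyadic doubling recursion $g_{2t} = g_t * g_t + 2g_t$ are exactly the ingredients that produce the exponent $\theta = (2p+2)/(p+2)$ and hence the threshold $\log_2((2p+2)/(p+2))$. The only point worth flagging is that your spectral step (self-adjointness of the convolver $\lambda(f)$ on $L^2(G)$ with respect to the involution $f^*(x) = \overline{f(x^{-1})}$, and the identity $f^{*2j}(e) = \norm{f^{*j}}_{L^2}^2$) tacitly assumes $G$ is unimodular; this is harmless here, since sub-additivity of $\omega$ together with $\omega^{-1} \in L^p(G)$ forces polynomial growth of compact sets and hence unimodularity, and in any case the paper only invokes the lemma for locally elliptic groups, which are unimodular.
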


We will now generalise this to the algebra $L^\Phi(G,\omega)$ on a locally elliptic group. To do this, we use the following result which is proved on page 142 of \cite{KMB12}. We include the proof due to its simplicity.

\begin{lem}\cite[pg.\ 142]{KMB12}\label{lem:lpineq}
For all $f \in L^\Phi(G,\omega)$ and $n \in \mathbb{N}$,
\begin{displaymath} u(nf) = nu(f) + \sum_{k=1}^{n-1}u(kf)*u(f).  \end{displaymath}
\end{lem}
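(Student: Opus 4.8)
The plan is to prove the identity
\begin{displaymath} u(nf) = nu(f) + \sum_{k=1}^{n-1}u(kf)*u(f) \end{displaymath}
by induction on $n$. First I would verify the base case $n=1$: the left-hand side is $u(f)$, while the right-hand side is $1 \cdot u(f)$ plus an empty sum, which is just $u(f)$. So the identity holds trivially for $n=1$.

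For the inductive step, I would assume the identity holds for some $n \ge 1$ and prove it for $n+1$. The key observation is a recursion relating $u((n+1)f)$ to $u(nf)$ and $u(f)$. Writing out the definition, $u(mf) = \sum_{k=1}^\infty \frac{(mf)^{*k}}{k!} = \sum_{k=1}^\infty \frac{m^k f^{*k}}{k!} = e^{mf} - 1$ in the formal/functional sense (more precisely, $u(mf)$ is the exponential of $mf$ minus the unit, computed in the unitization of $L^\Phi(G,\omega)$). The cleanest route is to note that if we set $E(mf) := \id + u(mf)$ (the exponential in the unitization), then $E((n+1)f) = E(nf) * E(f)$, since the convolution exponential satisfies $E((a+b)f) = E(af)*E(bf)$ whenever the relevant series converge absolutely — and the norm estimate $\norm{u(f)} \le e^{\norm{f}}-1$ already established in the excerpt guarantees absolute convergence in $L^\Phi(G,\omega)$. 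Expanding $E((n+1)f) = E(nf)*E(f)$ gives
\begin{displaymath} \id + u((n+1)f) = (\id + u(nf)) * (\id + u(f)) = \id + u(nf) + u(f) + u(nf)*u(f), \end{displaymath}
hence $u((n+1)f) = u(nf) + u(f) + u(nf)*u(f)$.

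Now I would substitute the inductive hypothesis for $u(nf)$ into this relation:
\begin{align*} u((n+1)f) &= \Big(nu(f) + \sum_{k=1}^{n-1}u(kf)*u(f)\Big) + u(f) + \Big(nu(f) + \sum_{k=1}^{n-1}u(kf)*u(f)\Big)*u(f). \end{align*}
The term $u(nf)*u(f)$ is exactly the $k=n$ term of the target sum, so collecting the $nu(f) + u(f) = (n+1)u(f)$ contribution and recognizing that the sum now runs up to $k=n$ would yield $u((n+1)f) = (n+1)u(f) + \sum_{k=1}^{n}u(kf)*u(f)$, completing the induction. One subtlety to handle carefully is that the recursion $u(nf)*u(f)$ appearing in the middle step must be matched against the extended sum correctly; I would double-check that the cross-term does not get double-counted.

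The main obstacle, though a mild one, is justifying the convolution-exponential identity $E((n+1)f) = E(nf)*E(f)$ rigorously: this requires interchanging the order of summation in the product of two absolutely convergent series and using the binomial-type rearrangement $\sum_{j+k=m}\frac{n^j}{j!}\frac{1}{k!} = \frac{(n+1)^m}{m!}$. This is routine given absolute convergence in the Banach algebra $L^\Phi(G,\omega)$, which the excerpt's norm bound supplies, but it is the one place where analytic care is genuinely needed rather than pure algebra. Alternatively, one can avoid the exponential entirely and argue directly at the level of power series coefficients, matching the coefficient of $f^{*m}$ on both sides, which reduces the whole statement to the elementary identity $\frac{(n+1)^m}{m!} = \frac{n^m + 1}{m!} + \sum_{k=1}^{n-1}\sum_{j=1}^{m-1}\frac{k^j}{j!}\frac{1}{(m-j)!}$; I would likely present whichever of these two is shorter once the bookkeeping is laid out.
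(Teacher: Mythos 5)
Your main argument is correct and is essentially the paper's own proof: the paper likewise derives the recursion $u(nf) = u((n-1)f)*u(f) + u((n-1)f) + u(f)$ from the formal identity $u(f) = e^f - 1$ together with $e^{(n-1)f}*e^f = e^{nf}$, and then closes by "a simple induction argument"; your additional care about absolute convergence (via $\norm{u(f)}_{L^\Phi(G,\omega)} \le e^{\norm{f}_{L^\Phi(G,\omega)}}-1$) and working in the unitization merely makes explicit what the paper treats as formal. One caveat: the coefficient identity in your closing aside is garbled as written --- matching coefficients of $f^{*m}$ in the lemma gives $\frac{n^m}{m!} = \frac{n}{m!} + \sum_{k=1}^{n-1}\sum_{j=1}^{m-1}\frac{k^j}{j!\,(m-j)!}$, while matching them in the recursion gives $\frac{(n+1)^m}{m!} = \frac{n^m+1}{m!} + \sum_{j=1}^{m-1}\frac{n^j}{j!\,(m-j)!}$, and your displayed formula mixes the two (it already fails at $n=1$, $m=2$); since your induction proof never uses it, this does not affect the correctness of the proposal, but it should be fixed or dropped.
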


\begin{proof}
Note that, as a formal expression, $u(f) = e^{f}-1$. Then,
\begin{align*}
  u(nf) &= e^{nf}-1 = e^{(n-1)f}*e^{f}-1 = (u((n-1)f)+1)*(u(f)+1)-1\\
  & = u((n-1)f)*u(f)+u((n-1)f)+u(f).
\end{align*}
A simple induction argument then gives the result.
\end{proof}

As a consequence, the following result holds.

\begin{lem}\label{lem:lphibound}
Let $G$ be a locally elliptic group, $(\Phi, \Psi)$ a complementary pair of Young functions and $\omega$ an $L^\Phi$-weight on $G$. Let $(K_i)_{i \in \mathbb{N}}$ be a standard decomposition of $G$ and $\omega^\sharp_1$ the weight as defined in Proposition \ref{prop:polyw1} with respect to the compact open subgroups $(K_i)_{i \in \mathbb{N}}$. Then, for any self-adjoint $f \in L^\Phi(G,\omega) \cap L^1(G,\omega^\sharp_1) \cap L^2(G)$, and $\gamma > \log_2(4/3)$, 
\begin{displaymath} \norm{u(inf)}_{L^\Phi(G,\omega)} = O(e^{2n^\gamma}) \text{ as $n \rightarrow \infty$}. \end{displaymath}
\end{lem}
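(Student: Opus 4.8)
The plan is to bootstrap from Pytlik's Lemma \ref{lem:Pyt2} for the weighted $L^1$-algebra $L^1(G,\omega^\sharp_1)$ to the weighted Orlicz algebra $L^\Phi(G,\omega)$, using the submultiplicative bound of Proposition \ref{prop:Orc2}(i) together with the recursion of Lemma \ref{lem:lpineq}. The key observation is that $\omega^\sharp_1$ is a sub-additive weight with $1/\omega^\sharp_1 \in L^1(G)$ (Proposition \ref{prop:polyw1}), so Lemma \ref{lem:Pyt2} applies to $\omega^\sharp_1$ with $p=1$, giving $\log_2((2p+2)/(p+2)) = \log_2(4/3)$, which matches the constraint on $\gamma$ in the statement. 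Thus for self-adjoint $f \in L^1(G,\omega^\sharp_1)\cap L^2(G)$ we already have $\norm{u(inf)}_{L^1(G,\omega^\sharp_1)} = O(e^{n^\gamma})$, and since $\omega \le \omega^\sharp_1$ gives $\norm{\cdot}_{L^1(G,\omega)} \le \norm{\cdot}_{L^1(G,\omega^\sharp_1)}$, we also control the plain $L^1(G,\omega)$-norm (and a fortiori the $L^1(G)$-norm) of $u(inf)$ by the same bound.

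The main step is to transfer this $L^1$-estimate into an $L^\Phi(G,\omega)$-estimate. First I would apply Lemma \ref{lem:lpineq} with $n$ replaced by the integer-scaled argument: writing $u(inf) = u(n\cdot(if))$ and expanding via the identity
\begin{displaymath} u(inf) = n\,u(if) + \sum_{k=1}^{n-1} u(kif)*u(if), \end{displaymath}
I would estimate each term using Proposition \ref{prop:Orc2}(i), namely $\norm{g*h}_{L^\Phi(G,\omega)} \le C(\norm{g}_{L^1(G)}\norm{h}_{L^\Phi(G,\omega)} + \norm{g}_{L^\Phi(G,\omega)}\norm{h}_{L^1(G)})$. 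This reduces a single $L^\Phi$-norm to a product of one $L^\Phi$-factor and one $L^1$-factor, both of the form $u(jif)$. Summing the $n-1$ terms and using that each $L^1$-factor is $O(e^{n^\gamma})$ while the $L^\Phi$-factors are bounded by $\norm{u(if)}_{L^\Phi(G,\omega)}$ or by the running maximum, the factor of $n$ in front and the summation over $k$ contribute at most polynomially, which is absorbed into a second exponential, explaining the $e^{2n^\gamma}$ bound rather than $e^{n^\gamma}$.

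The delicate point, and the one I would set up carefully, is the induction that tracks how the $L^\Phi(G,\omega)$-norm of $u(kif)$ grows as $k$ increases, since the recursion is not a clean closed form. I would let $M_\Phi(k) := \norm{u(kif)}_{L^\Phi(G,\omega)}$ and $M_1(k) := \norm{u(kif)}_{L^1(G)}$, establish via Lemma \ref{lem:lpineq} and Proposition \ref{prop:Orc2}(i) a recursive inequality of the shape $M_\Phi(n) \le n\,M_\Phi(1) + C\sum_{k=1}^{n-1}\bigl(M_1(k)M_\Phi(1) + M_\Phi(k)M_1(1)\bigr)$, and feed in the already-known bound $M_1(k) = O(e^{k^\gamma})$. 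The main obstacle is confirming that this induction yields an exponential-in-$n^\gamma$ bound with the correct exponent: the $\sum_{k=1}^{n-1} M_\Phi(k)$ term is the one that must be shown not to blow up faster than $e^{2n^\gamma}$, and I expect this requires a Grönwall-type or direct inductive argument exploiting that $\sum_{k=1}^{n-1} e^{k^\gamma}$ is comparable to $n\,e^{(n-1)^\gamma}$ (since $\gamma < 1$, the terms grow sub-geometrically and the sum is dominated by its last few terms up to a polynomial factor). I would note that the hypothesis $f \in L^1(G,\omega^\sharp_1)$ is exactly what guarantees the $L^1$-side estimates via Pytlik, while $f \in L^\Phi(G,\omega)$ ensures $u(if) \in L^\Phi(G,\omega)$, so both norms are finite and the recursion is well-posed.
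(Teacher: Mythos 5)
There is a genuine gap, and it sits exactly at the step where you transfer the $L^1$-estimate to the $L^\Phi$-estimate. You invoke Proposition \ref{prop:Orc2}(i) for the algebra $L^\Phi(G,\omega)$, but that proposition requires the weight to be sub-additive with $1/\omega \in L^\Psi(G)$; in the present lemma $\omega$ is only assumed to be an $L^\Phi$-weight, so the symmetric estimate $\norm{g*h}_{L^\Phi(G,\omega)} \le C(\norm{g}_{L^1(G)}\norm{h}_{L^\Phi(G,\omega)} + \norm{g}_{L^\Phi(G,\omega)}\norm{h}_{L^1(G)})$ is not available for $\omega$. It \emph{is} available for $\omega^\sharp_1$, but you cannot switch to the norm $\norm{\cdot}_{L^\Phi(G,\omega^\sharp_1)}$ either, because $f$ is only assumed to lie in $L^\Phi(G,\omega) \cap L^1(G,\omega^\sharp_1) \cap L^2(G)$, and $L^\Phi(G,\omega^\sharp_1) \subseteq L^\Phi(G,\omega)$ is the wrong-way inclusion. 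Moreover, even granting the symmetric estimate, the recursion you set up does not close: the term $C\,M_1(1)\sum_{k=1}^{n-1} M_\Phi(k)$ means the inductive bound must absorb a fixed multiplicative constant $C\,M_1(1)$ (which there is no reason to assume is $<1$) at essentially every step, so the best bound obtainable grows at least geometrically in $n$; since $\gamma < 1$, the target $e^{2n^\gamma}$ is sub-exponential in $n$ (indeed $e^{2n^\gamma}/e^{2(n-1)^\gamma} \rightarrow 1$ as $n \rightarrow \infty$), so no Gr\"onwall-type argument of this shape can reach it.

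The paper avoids both problems by using Proposition \ref{prop:Orc3}(iii) instead, which holds for \emph{every} $L^\Phi$-weight: $L^\Phi(G,\omega)$ is an $L^1(G,\omega)$-module, with $\norm{g*h}_{L^\Phi(G,\omega)} \le \norm{g}_{L^1(G,\omega)}\norm{h}_{L^\Phi(G,\omega)}$. Applying this to each summand $u(ikf)*u(if)$ from Lemma \ref{lem:lpineq} puts the \emph{growing} factor $u(ikf)$ in the $L^1(G,\omega)$-norm, which is dominated by $\norm{u(ikf)}_{L^1(G,\omega^\sharp_1)} \le Ce^{k^\gamma}$ using $\omega \le \omega^\sharp_1$ and Pytlik's lemma, while the $L^\Phi(G,\omega)$-norm lands only on the \emph{fixed} factor $u(if)$. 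Consequently there is no recursion in the $L^\Phi$-norm at all:
\begin{displaymath} \norm{u(inf)}_{L^\Phi(G,\omega)} \le n\norm{u(if)}_{L^\Phi(G,\omega)} + \sum_{k=1}^{n-1} Ce^{k^\gamma}\norm{u(if)}_{L^\Phi(G,\omega)} \le C'' n e^{n^\gamma} \le C'' e^{2n^\gamma} \end{displaymath}
for $n$ large. Your opening steps (Pytlik with $p=1$ for $\omega^\sharp_1$, which correctly produces the threshold $\log_2(4/3)$; the identity of Lemma \ref{lem:lpineq}; and the observation $\norm{\cdot}_{L^1(G,\omega)} \le \norm{\cdot}_{L^1(G,\omega^\sharp_1)}$) are exactly the paper's and are correct; the single change needed is to replace Proposition \ref{prop:Orc2}(i) by Proposition \ref{prop:Orc3}(iii) in the convolution step, which simultaneously fixes the applicability issue and eliminates the non-closing induction.
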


\begin{proof}
Throughout the proof, we fix a self-adjoint $f \in L^\Phi(G,\omega) \cap L^1(G,\omega^\sharp_1) \cap L^2(G)$ and $\gamma > \log_2(4/3)$. Since the weight $\omega^\sharp_1$ is a sub-additive weight on $G$ and $1/\omega^\sharp_1\in L^1(G)$, it follows by Lemma \ref{lem:Pyt2} that there exists a constant $C>0$, such that for large $n$,
\begin{displaymath} \norm{u(inf)}_{L^1(G,\omega^\sharp_1)} \le C e^{n^\gamma}.  \end{displaymath} 

Then, by applying Lemma \ref{lem:lpineq} and Proposition \ref{prop:Orc3}(iii), we have that
\begin{align*} 
\norm{u(inf)}_{L^\Phi(G,\omega)} &\le n \norm{u(if)}_{L^\Phi(G,\omega)} + \sum_{k=1}^{n-1} \norm{u(ikf)}_{L^1(G,\omega)}\norm{u(if)}_{L^\Phi(G,\omega)} \\
&\le n \norm{u(if)}_{L^\Phi(G,\omega)} + \sum_{k=1}^{n-1} \norm{u(ikf)}_{L^1(G,\omega^\sharp_1)}\norm{u(if)}_{L^\Phi(G,\omega)}.  
\end{align*}

It follows that there exist constants $C'$ and $C''$ such that
\begin{align*}
\norm{u(inf)}_{L^\Phi(G,\omega)} &\le C' (n  + \sum_{k=1}^{n-1} \norm{u(ikf)}_{L^1(G,\omega^\sharp_1)}) \\
&\le C' (n+ (n-1) \norm{u(i(n-1)f)}_{L^1(G,\omega^\sharp_1)}) \\
&\le C''n e^{n^\gamma} \le C''e^{2n^\gamma}
\end{align*}
for $n$ large. This is precisely what we needed to show.
\end{proof}

We are now able to construct our functional calculus. To do this, we define the following algebra of functions.

\begin{dfn}
Given $0 < \gamma < 1$, define a weight on $\mathbb{Z}$ by $\omega_\gamma(n) := e^{2\lvert n \rvert^\gamma}$ ($n \in \mathbb{Z}$). Let $\tilde{A}_\gamma$ denote the algebra of $2\pi$-periodic $C^\infty$-functions $\varphi: \mathbb{R} \rightarrow \mathbb{R}$ with Fourier coefficients in $\ell^1(\mathbb{Z},\omega_\gamma)$ under pointwise multiplication. In particular, the algebra $\tilde{A}_\gamma$ consists of functions of the form $\varphi(x) = \sum_{n \in \mathbb{Z}} \hat{\varphi}(n) e^{inx}$ such that $\sum_{n \in \mathbb{Z}} \lvert \hat{\varphi}(n) \rvert e^{2\lvert n \rvert^\gamma} < \infty$. Then, we let $A_\gamma$ be the subalgebra of $\tilde{A}_\gamma$ consisting of those functions $\varphi \in \tilde{A}_\gamma$ with $\varphi(0)=0$.
\end{dfn}

Before constructing our functional calculus, we will state the following fact about the algebra $A_\gamma$. This is a consequence of Lemma 1.24 and Theorem 2.11 in \cite{Dom56}.

\begin{prop}\cite{Dom56}\label{prop:Agamma}
For $0<\gamma<1$, the algebra $A_\gamma$ contains functions of arbitrarily small support. Furthermore, for every $\epsilon >0$ and every interval $[p,q] \subseteq (0,2\pi)$ with $p+\epsilon < q - \epsilon$, there exists a function $\varphi \in A_\gamma$ satisfying the following properties:
\begin{enumerate}[(i)]
   \item $0 \le \varphi \le 1$; \
   \item $\supp(\varphi) \cap [0,2\pi] \subseteq [p,q]$; \
   \item $\varphi(x) =1$ for all $x \in [p+\epsilon,q-\epsilon]$. 
\end{enumerate}
\end{prop}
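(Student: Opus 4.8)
The statement asserts that the weighted algebra $\ell^1(\mathbb{Z},\omega_\gamma)$, equivalently the Fourier algebra $A_\gamma$, is rich enough to contain bump functions of arbitrarily small support with prescribed behaviour. The plan is to reduce the claim entirely to a statement about the existence of a single compactly supported (modulo $2\pi$) function in $A_\gamma$, and then to produce the required family by convolution smoothing on the Fourier side. The reference to \cite{Dom56} (Lemma 1.24 and Theorem 2.11) is doing the heavy lifting here, so I would first extract precisely what those results give us. Theorem 2.11 of \cite{Dom56} is the sharp criterion for nontriviality of the ideal of functions with small support in a weighted Wiener algebra $\ell^1(\mathbb{Z},\omega_\gamma)$: since the weight $\omega_\gamma(n)=e^{2|n|^\gamma}$ with $0<\gamma<1$ satisfies the Beurling--Domar nonquasianalyticity condition $\sum_{n\ge1} n^{-2}\log\omega_\gamma(n)<\infty$ (because $\log\omega_\gamma(n)=2n^\gamma$ and $\sum n^{\gamma-2}<\infty$ for $\gamma<1$), the algebra admits functions of arbitrarily small support. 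This verification of the Domar condition is the one genuinely quantitative input and is where I would be careful.

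Once the existence of \emph{some} nonzero $\psi\in A_\gamma$ with small support is in hand, I would build the explicit bump as follows. Fix $\epsilon>0$ and the interval $[p,q]\subseteq(0,2\pi)$ with $p+\epsilon<q-\epsilon$. Let $\chi$ be the characteristic function of a suitable closed subinterval $[p',q']$ with $p+\epsilon<p'$ and $q'<q-\epsilon$ chosen so that the enlarged interval still sits inside $[p,q]$. I would then convolve $\chi$ on $\mathbb{R}/2\pi\mathbb{Z}$ with a nonnegative smoothing kernel $\rho\in A_\gamma$ of mass one supported in a tiny neighbourhood of $0$ (of radius less than the gaps $p'-(p+\epsilon)$ and $(q-\epsilon)-q'$), normalising so that $\int\rho=1$. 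Setting $\varphi:=\chi*\rho$, the three required properties follow: property (i) $0\le\varphi\le1$ holds because $\varphi$ is an average of the $[0,1]$-valued function $\chi$ against a probability density; property (ii) holds because $\supp(\varphi)\subseteq\supp(\chi)+\supp(\rho)\subseteq[p,q]$; and property (iii) holds because for $x\in[p+\epsilon,q-\epsilon]$ the entire support of the translated kernel lies inside $[p',q']$, so $\varphi(x)=\int\rho=1$.

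The remaining point is to confirm that $\varphi=\chi*\rho$ actually lies in $A_\gamma$, i.e.\ that its Fourier coefficients are summable against $\omega_\gamma$ and that $\varphi(0)=0$. On the Fourier side convolution becomes multiplication, so $\widehat{\varphi}(n)=\widehat{\chi}(n)\,\widehat{\rho}(n)$, and since $|\widehat{\chi}(n)|=O(1/|n|)$ while $\rho\in\ell^1(\mathbb{Z},\omega_\gamma)$ forces $\widehat{\rho}(n)\,\omega_\gamma(n)$ to be summable, the product $\sum_n|\widehat{\varphi}(n)|\,\omega_\gamma(n)\le \sup_n(|n|\,|\widehat\rho(n)|\omega_\gamma(n))\cdot\sum_n|\widehat\chi(n)|/|n|\cdot(\text{finite})$ converges; here I would instead argue more cleanly that $\ell^1(\mathbb{Z},\omega_\gamma)$ is itself a convolution algebra (submultiplicativity of $\omega_\gamma$, which holds since $|m+n|^\gamma\le|m|^\gamma+|n|^\gamma$ for $0<\gamma<1$), so that $A_\gamma$ is closed under pointwise products and it suffices that both factors lie in the algebra. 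The condition $\varphi(0)=0$, placing $\varphi$ in $A_\gamma$ rather than just $\tilde A_\gamma$, is immediate from $\supp(\varphi)\cap[0,2\pi]\subseteq[p,q]\subseteq(0,2\pi)$, so $0\notin\supp\varphi$.

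\textbf{Main obstacle.} The only substantive step is the very first one: establishing that $A_\gamma$ contains \emph{any} nonzero function of arbitrarily small support. This is exactly the Beurling--Domar nonquasianalyticity phenomenon, and the hard analytic content is encapsulated in the cited results of \cite{Dom56}; the crucial and easily-overlooked check is that $\gamma<1$ (strictly) makes $\sum_{n}n^{-2}\log\omega_\gamma(n)=2\sum_n n^{\gamma-2}$ finite, which is precisely the threshold separating the quasianalytic case $\gamma=1$ (where no small-support functions exist) from the regular case. Everything after that is the routine mollification argument sketched above, requiring only the submultiplicativity of $\omega_\gamma$ and elementary support bookkeeping.
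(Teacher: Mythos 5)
Your overall strategy---deferring the hard analytic content to Domar's non-quasianalyticity theorem and then manufacturing the bump by mollification---is consistent with the paper, which proves this proposition purely by citing Lemma 1.24 and Theorem 2.11 of \cite{Dom56}; your verification of the Beurling--Domar condition $\sum_{n\ge1}n^{-2}\log\omega_\gamma(n)=2\sum_{n\ge1}n^{\gamma-2}<\infty$ precisely when $\gamma<1$ is the correct quantitative input. However, your mollification step contains a concrete error: the interval inclusions are backwards. You take $[p',q']$ with $p+\epsilon<p'$ and $q'<q-\epsilon$, i.e.\ strictly \emph{inside} $[p+\epsilon,q-\epsilon]$, and a kernel of radius $\delta$ smaller than the gaps. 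Then $\varphi=\chi*\rho$ is supported in $[p'-\delta,q'+\delta]\subseteq[p+\epsilon,q-\epsilon]$, so (ii) holds, but $\varphi$ vanishes identically on $[p+\epsilon,p'-\delta]$, so (iii) fails near the endpoints: one only gets $\varphi\equiv1$ on $[p'+\delta,q'-\delta]$, which is a proper subset of $[p+\epsilon,q-\epsilon]$. You need the opposite configuration: $[p',q']$ must \emph{contain} $[p+\epsilon,q-\epsilon]$ and sit inside $[p,q]$, e.g.\ $p'=p+\epsilon/2$, $q'=q-\epsilon/2$, $\delta<\epsilon/2$; then $\supp(\varphi)\subseteq[p'-\delta,q'+\delta]\subseteq[p,q]$ and $\varphi\equiv1$ on $[p'+\delta,q'-\delta]\supseteq[p+\epsilon,q-\epsilon]$.

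Two further points need repair. First, your ``cleaner'' justification that $\varphi\in A_\gamma$ is invalid: closure of $A_\gamma$ under pointwise products would require both factors to lie in the algebra, but $\chi$ is the characteristic function of an interval, with $\widehat{\chi}(n)$ decaying only like $1/|n|$, so $\chi\notin\tilde{A}_\gamma$. Keep your direct argument instead, in its simplest form: $|\widehat{\chi}(n)|$ is bounded, convolution multiplies Fourier coefficients, hence $\sum_n|\widehat{\varphi}(n)|\omega_\gamma(n)\le C\sum_n|\widehat{\rho}(n)|\omega_\gamma(n)<\infty$; this weighted summability also gives the required smoothness of $\varphi$ for free. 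Second, Domar's theorem hands you a nonzero small-support $\psi\in\tilde{A}_\gamma$, not a nonnegative kernel of mass one; to get your probability density set $\rho:=\psi\overline{\psi}/\int|\psi|^2$, which stays in $\tilde{A}_\gamma$ because the algebra is closed under conjugation (the weight is symmetric) and under pointwise products (the weight is submultiplicative, since $|m+n|^\gamma\le|m|^\gamma+|n|^\gamma$ for $0<\gamma<1$). Note also that $\rho$ need only lie in $\tilde{A}_\gamma$, not $A_\gamma$; the final $\varphi$ lands in $A_\gamma$ because it vanishes in a neighbourhood of $0$, as you correctly observe. With these repairs your argument is complete.
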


We now prove the following proposition which gives the required functional calculus.

\begin{prop}\label{prop:lphicalc}
Let $G$ be a locally elliptic group, $(\Phi, \Psi)$ a complementary pair of Young functions and $\omega$ an $L^\Phi$-weight on $G$. Fix a $\gamma$ with $\log_2(4/3) < \gamma < 1$. Then, for any $\varphi \in A_\gamma$ and self-adjoint $f \in L^\Phi(G,\omega) \cap L^1(G,\omega^\sharp_1) \cap L^2(G)$,
\begin{displaymath} \varphi\{f\} := \sum_{n\in \mathbb{Z}}\hat\varphi(n) u(inf)  \end{displaymath}
has finite $L^\Phi(G,\omega)$-norm and hence converges to an element of $L^\Phi(G,\omega)$.
\end{prop}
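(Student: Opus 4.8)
The plan is to show that the series defining $\varphi\{f\}$ converges absolutely in $L^\Phi(G,\omega)$ by bounding its norm term-by-term. First I would apply the triangle inequality to write
\begin{displaymath}
\norm{\varphi\{f\}}_{L^\Phi(G,\omega)} \le \sum_{n \in \mathbb{Z}} \lvert \hat\varphi(n) \rvert \, \norm{u(inf)}_{L^\Phi(G,\omega)}.
\end{displaymath}
The idea is then to control the growth of $\norm{u(inf)}_{L^\Phi(G,\omega)}$ using Lemma \ref{lem:lphibound}, which is exactly the estimate designed for this purpose: since $f$ is self-adjoint and lies in $L^\Phi(G,\omega) \cap L^1(G,\omega^\sharp_1) \cap L^2(G)$, and since we have fixed $\gamma > \log_2(4/3)$, there is a constant $C > 0$ such that $\norm{u(inf)}_{L^\Phi(G,\omega)} \le C e^{2\lvert n \rvert^\gamma}$ for all large $\lvert n \rvert$.

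The key point is that this bound is precisely matched to the weight $\omega_\gamma(n) = e^{2\lvert n \rvert^\gamma}$ defining the algebra $A_\gamma$. Because $\varphi \in A_\gamma$, by definition its Fourier coefficients satisfy $\sum_{n \in \mathbb{Z}} \lvert \hat\varphi(n) \rvert e^{2\lvert n \rvert^\gamma} < \infty$. Substituting the estimate from Lemma \ref{lem:lphibound} into the sum above, I would obtain
\begin{displaymath}
\sum_{n \in \mathbb{Z}} \lvert \hat\varphi(n) \rvert \, \norm{u(inf)}_{L^\Phi(G,\omega)} \le C \sum_{n \in \mathbb{Z}} \lvert \hat\varphi(n) \rvert e^{2\lvert n \rvert^\gamma} < \infty,
\end{displaymath}
so the series converges absolutely and hence, by completeness of the Banach space $L^\Phi(G,\omega)$, converges to an element of $L^\Phi(G,\omega)$ with finite norm.

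A small technical point I would attend to is that Lemma \ref{lem:lphibound} only guarantees the bound $\norm{u(inf)}_{L^\Phi(G,\omega)} = O(e^{2n^\gamma})$ for $n$ large (and the stated version is phrased for positive $n$); I would note that $u(inf)$ and $u(-inf)$ have comparable norms since $f$ is self-adjoint, so the same growth bound applies for all sufficiently large $\lvert n \rvert$, and the finitely many remaining terms contribute only a finite amount. Thus there is a single constant absorbing both the $O$-term and the finite initial segment. The argument is essentially a clean matching of two exponential rates, so I do not anticipate a genuine obstacle here; the real work has already been done in establishing Lemma \ref{lem:lphibound}, and this proposition is the payoff that converts that norm estimate into a well-defined functional calculus.
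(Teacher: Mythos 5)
Your proposal is correct and follows essentially the same route as the paper's own proof: triangle inequality, the growth estimate $\norm{u(inf)}_{L^\Phi(G,\omega)} = O(e^{2\lvert n\rvert^\gamma})$ from Lemma \ref{lem:lphibound}, and absolute convergence via the defining summability condition of $A_\gamma$, concluding by completeness. Your added remark about handling negative $n$ and the finitely many initial terms is a point the paper absorbs silently into its constant $C$, so it is a welcome (if minor) extra precision rather than a divergence in method.
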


\begin{proof}
Indeed, by Lemma \ref{lem:lphibound} and the definition of $A_\gamma$, we have that
\begin{align*}
\norm{\varphi\{f\}}_{L^\Phi(G,\omega)} & \le \sum_{n \in \mathbb{Z}}\lvert \hat\varphi(n) \rvert \norm{u(inf)}_{L^\Phi(G,\omega)} \\
&\le C \sum_{n\in \mathbb{Z}} \lvert \hat\varphi(n) \rvert  e^{2\lvert n \rvert^\gamma} < \infty \\ 
\end{align*}
for some fixed constant $C>0$. The result follows immediately.
\end{proof}

In particular, by the proposition, the algebra $A_\gamma$ acts on compactly supported continuous self-adjoint functions in $L^\Phi(G,\omega)$.

Since $A_\gamma$ is an algebra under pointwise multiplication, one can check, as done in \cite[Section 4.7]{FGLLMB03}, that for $\varphi,\psi \in A_\gamma$ and a self-adjoint $f \in L^\Phi(G,\omega) \cap L^1(G,\omega^\sharp_1) \cap L^2(G)$,
\begin{displaymath} (\varphi \cdot \psi)\{f\} = \varphi\{f\} * \psi\{f\} \end{displaymath}
and for any $*$-representation $\pi$ of $L^\Phi(G,\omega)$, 
\begin{displaymath} \pi(\varphi\{f\}) = \varphi(\pi(f)), \end{displaymath}
where $\varphi(\pi(f))$ denotes the usual functional calculus of the bounded operator $\pi(f)$.

Now, we are going to work towards the proof of Theorem \ref{thm:3}. First, we note that if $G$ is locally elliptic and $\omega$ a weight on $G$, then, given a fixed compact neighbourhood $K$ of the identity in $G$, there exists a bounded left approximate identity $(f_j)_{j \in J} \subseteq L^1(G,\omega)$ such that $f_j = f^*_j$ and $\supp(f_j) \subseteq K$ for each $j \in J$. Such an approximate identity exists by \cite[Theorem 4.1]{Kuz08}.

We now prove the following result. The proof uses arguments given in \cite[Section 5]{FGLLMB03}.

\begin{lem}\label{lem:conl1}
Let $G$ be a locally elliptic group, $\omega$ a weight on $G$ and $K \subseteq G$ a compact neighbourhood of the identity. Let $(f_j)_{j \in J} \subseteq L^1(G,\omega)$ be a bounded left approximate identity such that each $f_j$ is self-adjoint and $\supp(f_j) \subseteq K$ for all $j$. Then, for any $\gamma$ with $\log_2(4/3) < \gamma < 1$, there exists a function $\varphi \in A_\gamma$ such that, for all $g \in C_c(G)$, $\norm{\varphi\{f_j\}*g - g}_{L^1(G,\omega)} \rightarrow 0.$
\end{lem}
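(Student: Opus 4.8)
The plan is to build a single test function $\varphi\in A_\gamma$ that equals $1$ at the point where the spectral data of the $f_j$ accumulate, and then to transport the whole computation onto a compact open subgroup, where the Peter--Weyl theorem and the unitarity of the exponentials $e^{inf_j}$ make every estimate uniform in $j$.

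First I would fix $\varphi$. Applying Proposition \ref{prop:Agamma} to an interval $[p,q]\subseteq(0,2\pi)$ with $1$ in its interior (say $p=\tfrac12$, $q=\tfrac32$, $\epsilon=\tfrac14$) produces $\varphi\in A_\gamma$ with $0\le\varphi\le1$, $\supp(\varphi)\cap[0,2\pi]\subseteq[p,q]$ and $\varphi\equiv1$ near $1$; thus $\varphi(1)=1$, while $\varphi(0)=0$ since $0\notin\supp(\varphi)$. Next comes the localisation. As $G$ is locally elliptic, the subgroup generated by the compact set $K$ is relatively compact, so $U:=\overline{\langle K\rangle}$ is a compact open subgroup with $K\subseteq U$. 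Since $\supp(f_j)\subseteq U$, all the iterated convolutions $f_j^{*k}$, and hence $u(inf_j)$ and $\varphi\{f_j\}$, are supported in $U$, and on $U$-supported functions convolution in $G$ coincides with convolution in $U$. (We may assume the $f_j$ lie in $C_c(G)$, so that $\varphi\{f_j\}$ is defined by Proposition \ref{prop:lphicalc}.) Decomposing $\supp(g)$ into finitely many right cosets $Ux_1,\dots,Ux_m$ and setting $\tilde g_\ell(u):=g(ux_\ell)\in C(U)$, left convolution by the $U$-supported element $\varphi\{f_j\}$ preserves each coset and reduces to $\varphi\{f_j\}*_U\tilde g_\ell$; as $\omega$ is bounded above and below on each compact coset $Ux_\ell$, the weighted norm there is equivalent to a translate of the ordinary $L^1(U)$-norm. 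Hence it suffices to show, for each fixed $\tilde g\in C(U)$, that $\varphi\{f_j\}*_U\tilde g\to\tilde g$ in $L^1(U)$, noting that $(f_j)$ restricts to a self-adjoint bounded approximate identity of $L^1(U)$.

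On $U$ I would use Peter--Weyl. For each finite-dimensional irreducible $\pi$ the matrix $\hat f_j(\pi)$ is self-adjoint (because $f_j=f_j^*$) and converges to $I_{d_\pi}$ (because $f_j$ is an approximate identity), so the relation $\widehat{\varphi\{f_j\}}(\pi)=\varphi(\hat f_j(\pi))$ (the Peter--Weyl transform of the defining series, equivalently $\pi(\varphi\{f_j\})=\varphi(\pi(f_j))$) gives $\widehat{\varphi\{f_j\}}(\pi)\to\varphi(1)I_{d_\pi}=I_{d_\pi}$. For a $\tilde g_r$ with finite Fourier support this lives in a fixed finite-dimensional space, so $\varphi\{f_j\}*_U\tilde g_r\to\tilde g_r$ in every norm, in particular in $L^1(U)$. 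The central difficulty is to pass from such $\tilde g_r$ to a general $\tilde g\in C(U)$: this needs a bound on $\varphi\{f_j\}$ that is uniform in $j$, whereas the estimates of Lemma \ref{lem:lphibound} carry constants that blow up as the approximate identity concentrates at the identity.

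I would resolve this by working on $L^2(U)$. For each $j,n$ the left-convolution operator $\lambda(f_j)$ on $L^2(U)$ is self-adjoint, so $\lambda(e^{inf_j})=\exp(in\lambda(f_j))$ is unitary; therefore $\norm{u(inf_j)*_U h}_{L^2(U)}=\norm{(\lambda(e^{inf_j})-I)h}_{L^2(U)}\le 2\norm{h}_{L^2(U)}$, and summing against $\hat\varphi$ yields
\begin{displaymath} \norm{\varphi\{f_j\}*_U h}_{L^2(U)}\le 2\norm{\hat\varphi}_{\ell^1(\mathbb{Z})}\norm{h}_{L^2(U)} \end{displaymath}
with a constant independent of $j$. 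Since $U$ has finite Haar measure, $\norm{\cdot}_{L^1(U)}\le C_U\norm{\cdot}_{L^2(U)}$, so the same uniform bound holds into $L^1(U)$. A three-$\epsilon$ argument then closes the proof: approximating $\tilde g$ in $L^2(U)$ by a finite-Fourier-support $\tilde g_r$, the uniform bound controls $\varphi\{f_j\}*_U(\tilde g-\tilde g_r)$ uniformly in $j$, the finite-dimensional step gives $\varphi\{f_j\}*_U\tilde g_r\to\tilde g_r$, and $\norm{\tilde g-\tilde g_r}$ is small, so $\varphi\{f_j\}*_U\tilde g\to\tilde g$ in $L^1(U)$, which is the claim. (For the canonical idempotent approximate identity $f_j=\chi_{U_j}/\mu(U_j)$ on compact open subgroups $U_j$ one even has the exact identity $\varphi\{f_j\}=\varphi(1)f_j=f_j$, making the conclusion transparent; the argument above handles an arbitrary such approximate identity.)
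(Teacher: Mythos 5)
Your proof is correct, but it follows a genuinely different route from the paper's. The paper works globally on $G$: following \cite[Section 5]{FGLLMB03}, it reduces the claim to a uniform tail estimate $\sum_{\lvert n\rvert>N}\lvert\hat\varphi(n)\rvert\,\norm{e^{inf_j}*g}_{L^1(G,\omega)}<\epsilon$, which it obtains by dominating $\omega$ with the sub-additive weight $\omega_1^\sharp$ of Proposition \ref{prop:polyw1} and combining Proposition \ref{prop:Orc2}(i) with Pytlik's subexponential bound (Lemma \ref{lem:Pyt2}) to get $\norm{e^{inf_j}*g}_{L^1(G,\omega)}=O(e^{n^\gamma})$, summable against $\lvert\hat\varphi(n)\rvert\lesssim e^{-2\lvert n\rvert^\gamma}$. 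You instead exploit local ellipticity at the outset: since $U:=\overline{\langle K\rangle}$ is a compact open subgroup, all the functions $\varphi\{f_j\}$ live inside $U$, the coset decomposition reduces everything to $L^1(U)$, and on the compact group $U$ the self-adjointness of left convolution by $f_j$ on $L^2(U)$ makes $e^{in\lambda(f_j)}$ unitary, yielding the $j$-uniform bound $\norm{\varphi\{f_j\}*_U h}_{L^2(U)}\le 2\norm{\hat\varphi}_{\ell^1(\mathbb{Z})}\norm{h}_{L^2(U)}$; Peter--Weyl convergence on trigonometric polynomials plus a density argument then closes the proof. Your approach buys several things: it replaces both the \cite{FGLLMB03} machinery and the exponential-sum estimates by a transparent unitarity argument, the uniformity in $j$ is manifest, and it in fact proves the stronger statement that \emph{every} $\varphi\in A_\gamma$ with $\varphi(1)=1$ works (your particular choice, vanishing near $0$ with proper support, is still the right one to make, since the proof of Theorem \ref{thm:3} needs a $\psi\in A_\gamma$ identically $1$ on $\supp(\varphi)$). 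What the paper's approach buys is generality: it is not tied to the locally elliptic structure and is exactly the argument pattern that extends to compactly generated groups of polynomial growth. One caveat you share with the paper: for $\varphi\{f_j\}$ to be defined at all via Lemma \ref{lem:lphibound} and Proposition \ref{prop:lphicalc} one needs $f_j\in L^2(G)$, which a general compactly supported element of $L^1(G,\omega)$ need not satisfy; you make this explicit by assuming $f_j\in C_c(G)$ (strictly a strengthening of the stated hypotheses), while the paper makes the same assumption implicitly when it applies Lemma \ref{lem:Pyt2} to $f_j$, and the approximate identity actually used in the application (from \cite[Theorem 4.1]{Kuz08}) satisfies it, so this is not a defect of your argument relative to the paper's.
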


\begin{proof}
Fix $\gamma$ with $\log_2(4/3) < \gamma < 1$. By the arguments of \cite[Section 5]{FGLLMB03}, it suffices to show that we can choose $\varphi \in A_\gamma$ with $\varphi(1)=1$, so that for any $\epsilon > 0$, there exists $N \in \mathbb{N}$ such that
\begin{displaymath} \sum_{\lvert n \rvert > N} \lvert \hat\varphi(n) \rvert \norm{e^{inf_j}*g}_{L^1(G,\omega)} < \epsilon. \end{displaymath}
Now, let $(K_i)_{i \in \mathbb{N}}$ be a standard decomposition of $G$ and $\omega^\sharp_1$ the weight as defined in Proposition \ref{prop:polyw1} with respect to the compact open subgroups $(K_i)_{i \in \mathbb{N}}$. By Proposition \ref{prop:Orc2}(i), there exists a constant $C >0$ such that
\begin{align*} &\norm{e^{inf_j}*g}_{L^1(G,\omega)} \le  \norm{e^{inf_j}*g}_{L^1(G,\omega_1^\sharp)} \\
&\le \norm{g}_{L^1(G,\omega_1^\sharp)} + \norm{u(inf_j)*g}_{L^1(G,\omega_1^\sharp)} \\
&\le \norm{g}_{L^1(G,\omega_1^\sharp)} + C(\norm{u(inf_j)}_{L^1(G,\omega_1^\sharp)} \norm{g}_{L^1(G)} + \norm{g}_{L^1(G,\omega_1^\sharp)} \norm{u(inf_j)}_{L^1(G)}) \\
&\le \norm{g}_{L^1(G,\omega_1^\sharp)} + C(\norm{g}_{L^1(G)} + \norm{g}_{L^1(G,\omega_1^\sharp)})\norm{u(inf_j)}_{L^1(G,\omega_1^\sharp)}.
\end{align*}
Since $\norm{u(inf_j)}_{L^1(G,\omega_1^\sharp)} = O(e^{n^\gamma}) \text{ as $n \rightarrow \infty$}$ by Lemma \ref{lem:Pyt2}, and all other expressions in the last line are constants in the variable $n$, it follows that
\begin{displaymath} \norm{e^{inf_j}*g}_{L^1(G,\omega)} = O(e^{n^\gamma}) \text{ as $n \rightarrow \infty$}. \end{displaymath}
Then, by Proposition \ref{prop:Agamma}, we can choose a $\varphi \in A_\gamma$ such that
\begin{enumerate}[(i)]
   \item $\varphi = 0$ in a neighbourhood of 0;\
   \item $\varphi(1) = 1$;\
   \item $\supp(\varphi) \cap [0,2\pi]$ is compact.
\end{enumerate}
For this $\varphi$ we have that, for some constant $C>0$,
\begin{displaymath} \sum_{n \in \mathbb{Z}} \lvert \hat\varphi(n) \rvert \norm{e^{inf_j}*g}_{L^1(G,\omega)} \le C \sum_{n \in \mathbb{Z}} \lvert \hat\varphi(n) \rvert e^{\lvert n \rvert^\gamma} < \infty. \end{displaymath}
Thus, for any $\epsilon > 0$, we can find $N \in \mathbb{N}$ such that:
\begin{displaymath} \sum_{\lvert n \rvert > N} \lvert \hat\varphi(n) \rvert \norm{e^{inf_j}*g}_{L^1(G,\omega)} < \epsilon. \end{displaymath}
This $\varphi$ then satisfies the lemma.
\end{proof}

A similar result can be proved for general weighted Orlicz $*$-algebras.

\begin{lem}\label{lem:conlp}
Let $G$ be a locally elliptic group, $(\Phi, \Psi)$ a complementary pair of Young functions, $\omega$ an $L^\Phi$-weight on $G$ and $K \subseteq G$ a compact neighbourhood of the identity. Let $(f_j)_{j \in J} \subseteq L^1(G,\omega)$ be a bounded left approximate identity such that each $f_j$ is self-adjoint and $\supp(f_j) \subseteq K$ for all $j$. Then, for any $\gamma$ with $\log_2(4/3) < \gamma < 1$, there exists a function $\varphi \in A_\gamma$ such that, for all $f,g \in C_c(G)$, 
\begin{displaymath} \norm{\varphi\{f_j\}*f*g - f*g}_{L^\Phi(G,\omega)} \rightarrow 0. \end{displaymath}
\end{lem}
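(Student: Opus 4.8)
The plan is to deduce this from the $L^1$-version, Lemma \ref{lem:conl1}, by exploiting the fact that $L^\Phi(G,\omega)$ is an $L^1(G,\omega)$-module, as recorded in Proposition \ref{prop:Orc3}(iii). The two convolution factors $f$ and $g$ appearing in the statement are present precisely to make this transfer possible: one factor will be absorbed into an $L^1$-estimate (where we already have convergence), and the other will serve as an $L^\Phi$-anchor that the module inequality can act on.

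First I would fix $\gamma$ with $\log_2(4/3) < \gamma < 1$ and invoke Lemma \ref{lem:conl1}. Since $\omega$, being an $L^\Phi$-weight, is in particular a weight, that lemma (applied to the same approximate identity $(f_j)_{j\in J}$ and the same compact set $K$) produces a single $\varphi \in A_\gamma$ with the property that $\norm{\varphi\{f_j\}*h - h}_{L^1(G,\omega)} \to 0$ for every $h \in C_c(G)$; note that the quantifier order here is $\exists\varphi\,\forall h$, so the same $\varphi$ serves all test functions at once. I claim this $\varphi$ also works for Lemma \ref{lem:conlp}. The relevant observation is that $\varphi\{f_j\} \in L^1(G,\omega)$ (this is exactly what makes the defining series converge in Lemma \ref{lem:conl1}), so for fixed $f,g \in C_c(G) \subseteq L^1(G,\omega)$ the element $\varphi\{f_j\}*f - f$ lies in $L^1(G,\omega)$, while $g$ lies in $L^\Phi(G,\omega)$.

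The main computation is then a one-line combination of associativity of convolution with the module inequality:
\[
\norm{\varphi\{f_j\}*f*g - f*g}_{L^\Phi(G,\omega)} = \norm{(\varphi\{f_j\}*f - f)*g}_{L^\Phi(G,\omega)} \le \norm{\varphi\{f_j\}*f - f}_{L^1(G,\omega)}\,\norm{g}_{L^\Phi(G,\omega)}.
\]
Applying Lemma \ref{lem:conl1} with $h := f$ gives $\norm{\varphi\{f_j\}*f - f}_{L^1(G,\omega)} \to 0$, and since $\norm{g}_{L^\Phi(G,\omega)}$ is a finite constant independent of $j$, the right-hand side tends to $0$. Because the chosen $\varphi$ depends on neither $f$ nor $g$, this holds for all $f,g \in C_c(G)$ simultaneously, which is exactly what the lemma asserts.

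In contrast to Lemma \ref{lem:conl1}, there is essentially no hard analytic content here: all of the genuine work — constructing the functional calculus, establishing the $O(e^{n^\gamma})$ growth bounds via Lemma \ref{lem:Pyt2}, and selecting $\varphi$ vanishing near $0$ with $\varphi(1)=1$ — was already done in the $L^1$ setting. The only point requiring a little care is the bookkeeping ensuring that $\varphi\{f_j\}*f - f$ genuinely lands in $L^1(G,\omega)$, so that the module inequality of Proposition \ref{prop:Orc3}(iii) applies with the correct factor in the $L^1$-slot; this is immediate since $L^1(G,\omega)$ is a Banach $*$-algebra and $f \in C_c(G) \subseteq L^1(G,\omega)$. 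I expect no obstacle beyond this, and the proof is genuinely short.
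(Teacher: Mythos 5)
Your proof is correct and follows essentially the same route as the paper: invoke Lemma \ref{lem:conl1} to obtain a single $\varphi \in A_\gamma$ that works for all test functions, then factor $\varphi\{f_j\}*f*g - f*g = (\varphi\{f_j\}*f - f)*g$ and bound its $L^\Phi(G,\omega)$-norm by $\norm{\varphi\{f_j\}*f - f}_{L^1(G,\omega)}\norm{g}_{L^\Phi(G,\omega)}$. If anything, your citation of Proposition \ref{prop:Orc3}(iii) for this inequality is more apt than the paper's appeal to Proposition \ref{prop:Orc2}(i), since the latter requires $\omega$ to be sub-additive (not assumed here) and yields a bound involving the $L^\Phi(G,\omega)$-norm of the first factor rather than the clean $L^1(G,\omega)$-module estimate actually used.
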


\begin{proof}
Fix a $\gamma$ with $\log_2(4/3) < \gamma < 1$. By Lemma \ref{lem:conl1}, we can find a function $\varphi \in A_\gamma$ such that $\norm{\varphi\{f_j\}*g - g}_{L^1(G,\omega)} \rightarrow 0$ for all $g \in C_c(G)$. Then, let $f,g \in C_c(G)$. Since
\begin{displaymath} \norm{\varphi\{f_j\}*f*g - f*g}_{L^\Phi(G,\omega)} \le \norm{\varphi\{f_j\}*f - f}_{L^1(G,\omega)} \norm{g}_{L^\Phi(G,\omega)} \end{displaymath}
by Proposition \ref{prop:Orc2}(i), and since 
\begin{displaymath} \norm{\varphi\{f_j\}*f - f}_{L^1(G,\omega)} \rightarrow 0, \end{displaymath}
it follows that
\begin{displaymath} \norm{\varphi\{f_j\}*f*g - f*g}_{L^\Phi(G,\omega)} \rightarrow 0. \qedhere\end{displaymath}
\end{proof}

%

We now prove Theorem \ref{thm:3}. The argument already exists in literature (see \cite[Section 3, (10)]{Lep76} and \cite[Theorem 6.3]{KMB12}), but we provide it here for completeness.\\

\noindent\textit{Proof of Theorem \ref{thm:3}.}

For the proof we fix $\log_2(4/3) < \gamma < 1$. Let $I \subseteq L^\Phi(G,\omega)$ be a proper non-trivial closed two-sided ideal. Let $(f_j)_{j \in J} \subseteq L^1(G,\omega)$ and $\varphi \in A_\gamma$ be defined as in Lemma \ref{lem:conlp}. 

Now, we claim that $\varphi\{f_j\} \notin I$ for at least one $j$. Indeed, suppose the contrary. Then, $\varphi\{f_j\} *f*g \in I$ for all $j \in J$ and $f,g \in C_c(G)$ since $I$ is an ideal. By Lemma \ref{lem:conlp} and the fact that $I$ is closed, it follows that $f*g \in I$ for all $f,g \in C_c(G)$. This implies that $I = L^\Phi(G,\omega)$ by density of $C_c(G)$. This contradicts the fact that $I$ is assumed to be a proper ideal.

Thus, we may now fix $j \in J$ such that $\varphi\{f_j\} \notin I$. Let $\psi \in A_\gamma$ be a function which is identically 1 on the support of $\varphi$, which exists by Proposition \ref{prop:Agamma}. Then, $\psi\{f_j\} * \varphi\{f_j\} = (\psi\varphi)\{f_j\} = \varphi\{f_j\}$. In particular, the element $\psi\{f_j\} * \varphi\{f_j\} = \varphi\{f_j\}$ is a non-trivial element of the quotient $L^\Phi(G,\omega)/I$, and it follows that the image of $\psi(f_j)$ in the quotient $L^\Phi(G,\omega)/I$ contains 1 in its spectrum. Thus, $L^\Phi(G,\omega)/I$ is not a radical Banach algebra, and hence there exists a non-trivial algebraically irreducible representation of $L^\Phi(G,\omega)/I$ \cite[Theorem 2.3.3]{Pal94}. Lifting this representation to a representation of $L^\Phi(G,\omega)$ then gives a non-trivial algebraically irreducible representation of $L^\Phi(G,\omega)$ that annihilates $I$. This completes the proof.
\qed \\

\section{The $*$-regular property and representation theory of locally elliptic groups}\label{sec:6}

Let $G$ be a locally elliptic group, $(\Phi,\Psi)$ a complementary pair of Young functions with $\Phi \in \Delta_2$, and $\omega$ an $L^\Phi$-weight such that $1/\omega \in L^\Psi(G)$. As mentioned in the introduction, since $\Phi \in \Delta_2$, the (vector space) dual of $L^\Phi(G,\omega)$ is $L^\Phi(G,\omega)^* = L^\Psi(G,\omega^{-1})$ and it follows that every linear functional $\lambda \in L^\Phi(G,\omega)^*$ is of the form 
\begin{displaymath} \lambda(f) = \int_G f(x)g(x) \: dx \; \; \; (f \in L^\Phi(G,\omega)) \end{displaymath}
for some $g \in L^\Psi(G,\omega^{-1})$. Then, one checks that the arguments in \cite[Section 2.4]{KMB12} work identically for the algebra $L^\Phi(G,\omega)$, in particular, the map
\begin{displaymath} \pi \mapsto \bigg( f \mapsto \int_G f(x) \pi(x) \: dx \bigg) \end{displaymath}
provides a bijection between unitary representations of $G$ and unitary representations (i.e.\ non-degenerate $*$-representations) of $L^\Phi(G,\omega)$. Furthermore, this map preserves irreducibility and equivalence of representations. As a consequence of this, the space $\widehat{L^\Phi(G,\omega)}$ can be identified with $\widehat{G}$. This also implies that $C^*(L^\Phi(G,\omega)) = C^*(G)$ since $C_c(G) \subseteq L^\Phi(G,\omega) \subseteq L^1(G)$; this is precisely Theorem \ref{thm:4}(i).

In the following, given a unitary representation $\pi$ of $G$ and $f$ an element of $L^\Phi(G,\omega)$ (resp.\ $L^1(G)$), we will use the convention that $\pi$ also denotes the $*$-representation of $L^\Phi(G,\omega)$ (resp. $L^1(G)$) defined on $f$ by
\begin{displaymath} \pi(f) := \int_G f(x)\pi(x) \: dx. \end{displaymath}
Similarly, $\pi$ will also be used to denote the corresponding unitary representation of the group $C^*$-algebra $C^*(G)$.

We now prove the following result. The statement and proof model that of \cite[Proposition 5.2]{DLMB04}.

\begin{prop}\label{prop:reg}
Let $G$ be a locally elliptic group, $(\Phi, \Psi)$ a complementary pair of Young functions with $\Phi \in \Delta_2$, and $\omega$ an $L^\Phi$-weight on $G$ with $1/\omega \in L^\Psi(G)$. Let $C \subseteq \widehat{G}$ and fix $\rho \in \widehat{G}$. The following are equivalent:
\begin{enumerate}[(i)]
   \item $\cap_{\pi \in C} \ker_{C^*(G)}(\pi) \subseteq \ker_{C^*(G)}(\rho)$; \
   \item $\cap_{\pi \in C} \ker_{L^1(G)}(\pi) \subseteq \ker_{L^1(G)}(\rho)$; \
   \item $\cap_{\pi \in C} \ker_{L^\Phi(G,\omega)}(\pi) \subseteq \ker_{L^\Phi(G,\omega)}(\rho)$; \
   \item $\norm{\rho(f)}_{\B(\H_\rho)} \le \sup_{\pi \in C} \norm{\pi(f)}_{\B(\H_\pi)}$ \text{ for all $f \in C_c(G)$}; \
   \item $\norm{\rho(f)}_{\B(\H_\rho)} \le \sup_{\pi \in C} \norm{\pi(f)}_{\B(\H_\pi)}$ \text{ for all $f \in L^\Phi(G,\omega)$}; \
   \item $\norm{\rho(f)}_{\B(\H_\rho)} \le \sup_{\pi \in C} \norm{\pi(f)}_{\B(\H_\pi)}$ \text{ for all $f \in L^1(G)$}. \
\end{enumerate}
\end{prop}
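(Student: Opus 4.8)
The plan is to establish a cycle of implications among the six conditions, exploiting the dense inclusions $C_c(G) \subseteq L^\Phi(G,\omega) \subseteq L^1(G) \subseteq C^*(G)$ and the fact, recorded just before the proposition, that the representation theories of $G$, $L^\Phi(G,\omega)$, $L^1(G)$ and $C^*(G)$ all coincide via the integrated-form bijection. The three ``kernel containment'' conditions (i)--(iii) and the three ``norm domination'' conditions (iv)--(vi) are naturally paired, so I would first argue the equivalence of each kernel condition with its corresponding norm condition, and then close the loop by moving between the different algebras.

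First I would prove that (i), (ii), (iii) are mutually equivalent to the norm conditions over the \emph{same} test functions. For a single representation $\sigma$ and $f$ in any of the algebras, $f \in \ker_{\mathcal{A}}(\sigma)$ means exactly $\sigma(f)=0$, i.e.\ $\norm{\sigma(f)}_{\B(\H_\sigma)}=0$. Intersecting over $\pi \in C$, the containment $\cap_{\pi}\ker(\pi) \subseteq \ker(\rho)$ says precisely that $\pi(f)=0$ for all $\pi \in C$ forces $\rho(f)=0$. The key point is that this ``vanishing'' statement upgrades to the quantitative bound in (iv)--(vi): if $\pi(f)=0$ for all $\pi$ implies $\rho(f)=0$, then applying this to the self-adjoint element $f^**f$ and using the $C^*$-identity on each $\B(\H_\pi)$ together with a spectral/quotient argument (passing to the quotient $C^*$-algebra $\overline{C^*(G)/\cap_\pi \ker(\pi)}$ on which $\rho$ factors) yields the inequality $\norm{\rho(f)} \le \sup_\pi \norm{\pi(f)}$. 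Conversely the inequality trivially gives the kernel containment. This is the standard mechanism by which hull-kernel containment is equivalent to domination of $C^*$-seminorms, so I would carry it out once at the $C^*$-level, giving (i) $\Leftrightarrow$ (iv-on-$C^*$).

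Next I would transfer between the algebras. Since $\pi(f)$ depends only on $f$ as an element of $L^1(G)$ (all integrated forms agree on the common dense subalgebra), the \emph{operator} $\pi(f)$ is literally the same for $f \in C_c(G)$ regardless of which algebra we regard $f$ in; hence the norm inequalities (iv), (v), (vi) restricted to $C_c(G)$ are identical statements. The content is extending a bound valid on $C_c(G)$ to all of $L^\Phi(G,\omega)$ or $L^1(G)$: here I would use density of $C_c(G)$ in each algebra together with the continuity of every representation $\pi$ (each $\pi$ is $L^1$-contractive, hence continuous on $L^\Phi(G,\omega)$ via $L^\Phi(G,\omega)\subseteq L^1(G)$ with a continuous inclusion). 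Thus $\norm{\pi(f)} \le \norm{f}_{L^1(G)}$ uniformly in $\pi$, and both sides of the candidate inequality are continuous in $f$ for the $L^1$-norm (equivalently the $L^\Phi(G,\omega)$-norm on the smaller algebra), so a bound on the dense subalgebra $C_c(G)$ passes to the closure. This gives (iv) $\Leftrightarrow$ (v) $\Leftrightarrow$ (vi), and each in turn matches its paired kernel condition by the argument of the previous paragraph.

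The main obstacle I anticipate is not any single implication but making the $C^*$-level spectral argument clean, namely showing rigorously that pointwise vanishing of $\pi(f^**f)$ across $C$ forces the quantitative domination for $\rho$. The right framing is to pass to the quotient $C^*$-algebra $B := C^*(G)/\bigl(\bigcap_{\pi\in C}\ker_{C^*(G)}(\pi)\bigr)$: the supremum $\sup_{\pi\in C}\norm{\pi(f)}$ is exactly the norm of the image of $f$ in $B$ (since the joint kernel is the kernel of the direct-sum representation $\bigoplus_{\pi\in C}\pi$, whose image generates $B$), and condition (i) says $\rho$ factors through the quotient map onto $B$; a $*$-homomorphism of $C^*$-algebras is norm-nonincreasing, which is precisely (iv). I would lean on this quotient description to avoid ad hoc spectral-radius estimates, and then the remaining transfers to $L^1(G)$ and $L^\Phi(G,\omega)$ are routine density arguments. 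I would cite the identification of representation theories established in the paragraph preceding the proposition to justify treating $\rho$ and each $\pi$ simultaneously as representations of all four algebras.
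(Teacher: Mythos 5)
Your blocks (i) $\Leftrightarrow$ (iv) and (iv) $\Leftrightarrow$ (v) $\Leftrightarrow$ (vi) are correct and essentially match the paper: the quotient argument at the $C^*$-level is valid (the joint kernel is the kernel of $\bigoplus_{\pi\in C}\pi$, the quotient is a $C^*$-algebra whose norm is $\sup_{\pi\in C}\norm{\pi(\cdot)}_{\B(\H_\pi)}$, and $*$-homomorphisms of $C^*$-algebras are contractive), and the density/$L^1$-contractivity transfers among the norm conditions are exactly the paper's argument. The trivial directions (i) $\Rightarrow$ (ii) $\Rightarrow$ (iii) and (vi) $\Rightarrow$ (ii), (v) $\Rightarrow$ (iii) are also fine.

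The genuine gap is your final claim that (ii) and (iii) ``each in turn match their paired kernel condition by the argument of the previous paragraph.'' That argument is valid \emph{only} because $C^*(G)$ is a $C^*$-algebra. If you only know the kernel containment at the level of $L^1(G)$ (condition (ii)) or $L^\Phi(G,\omega)$ (condition (iii)), then $\rho$ factors through the quotient \emph{Banach} $*$-algebra $L^1(G)/\bigcap_{\pi\in C}\ker_{L^1(G)}(\pi)$, and all you get is $\norm{\rho(f)}_{\B(\H_\rho)} \le \norm{f}_{\text{quot}}$, the quotient Banach-algebra norm, which in general strictly dominates $\sup_{\pi\in C}\norm{\pi(f)}_{\B(\H_\pi)}$; even passing to the maximal $C^*$-seminorm of the quotient only bounds $\norm{\rho(f)}$ by the supremum over \emph{all} representations annihilating the joint $L^1$-kernel, not just those in $C$. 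Closing this gap is precisely the issue of $*$-regularity, and it is a theorem, not a formal manipulation: note that your proposal never uses the hypothesis that $G$ is locally elliptic, so if it were correct it would prove that every locally compact group is $*$-regular, contradicting the fact (recalled in the paper's introduction) that $*$-regular groups are amenable. The paper supplies the two missing nontrivial inputs. First, (ii) $\Rightarrow$ (i) is deduced from the theorem of Boidol--Leptin--Sch\"urmann--Vahle \cite[Satz 2]{BLSV78} that $L^1(G)$ is $*$-regular for every group of polynomial growth, hence for every locally elliptic group. Second, (iii) $\Rightarrow$ (iv) is proved by contradiction using the smooth functional calculus $A_\gamma$ built in Section \ref{sec:5}: if (iv) fails for some self-adjoint $g \in C_c(G)$ with $\norm{g}_{L^1(G)}\le 1$, one chooses $\varphi \in A_\gamma$ vanishing on a neighbourhood of $[-\sup_{\pi\in C}\norm{\pi(g)}_{\B(\H_\pi)},\,\sup_{\pi\in C}\norm{\pi(g)}_{\B(\H_\pi)}]$ with $\varphi(\norm{\rho(g)}_{\B(\H_\rho)})=1$; then $\varphi\{g\}$ is a genuine element of $L^\Phi(G,\omega)$ lying in $\bigcap_{\pi\in C}\ker_{L^\Phi(G,\omega)}(\pi)$ but not in $\ker_{L^\Phi(G,\omega)}(\rho)$, contradicting (iii). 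Without these (or comparable) inputs your cycle of implications cannot be closed.
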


\begin{proof}
Since $1/\omega \in L^\Psi(G)$, we have that $L^\Phi(G,\omega) \subseteq L^1(G) \subseteq C^*(G)$, and this directly implies that $(i) \implies (ii) \implies (iii)$.

For every group of polynomial growth, and in particular, every locally elliptic group $G$, $L^1(G)$ is $*$-regular \cite[Satz 2]{BLSV78} i.e.\ $\Prim_*(L^1(G))$ is homeomorphic to $\Prim_*(C^*(G))$. Consequently, $(ii)$ implies $(i)$.

Since $C_c(G) \subseteq L^\Phi(G,\omega) \subseteq L^1(G)$, we clearly have that $(vi)\implies(v)\implies(iv)$. On the other hand, since $C_c(G)$ is dense in $L^1(G)$ and $\norm{\sigma(f)}_{\B(\H_\sigma)} \le \norm{f}_{L^1(G)}$ for all $\sigma \in \widehat{G}$ and $f \in L^1(G)$, it follows immediately that $(iv)\implies(vi)$. Thus $(iv)$, $(v)$ and $(vi)$ are equivalent.

Clearly $(vi)$ implies $(ii)$ and hence $(i)$ by $*$-regularity of $L^1(G)$. 

Thus, to complete the proof, we just need to show that $(iii) \implies (iv)$. We follow a similar argument to that used in the proof of \cite[Proposition 5.2]{DLMB04}. Assume, for a contradiction, that $(iv)$ is false i.e.\ that there exists $g \in C_c(G)$ such that
\begin{displaymath} \sup_{\pi \in C} \norm{\pi(g)}_{\B(\H_\pi)} < \norm{\rho(g)}_{\B(\H_\rho)}. \end{displaymath}
We may assume that $g$ is self-adjoint and $\norm{g}_{L^1(G)} \le 1$. Let $\gamma$ be such that $\log_2(4/3) < \gamma < 1$. Recall from Section \ref{sec:5} that the algebra $A_\gamma$ acts by functional calculus on the self-adjoint compactly supported continuous functions in $L^\Phi(G,\omega)$, and in particular, on the function $g$. Now choose $\varphi \in A_\gamma$ that is zero on a neighbourhood of the interval 
\begin{displaymath} [-\sup_{\pi \in C} \norm{\pi(g)}_{\B(\H_\pi)}, \sup_{\pi \in C} \norm{\pi(g)}_{\B(\H_\pi)}] \subseteq [-1,1]\end{displaymath} 
and such that $\varphi(\norm{\rho(g)}_{\B(\H_\rho)}) = 1$. Such a function exists by Proposition \ref{prop:Agamma}. Then, it follows that $\varphi$ is zero on the spectrum of $\pi(g)$ for every $\pi \in C$, hence, $\pi(\varphi\{g\}) = \varphi(\pi(g)) = 0$ for all $\pi \in C$. Also, since $\norm{\rho(g)}_{\B(\H_\rho)}$ is in the spectrum of $\rho(g)$ and $\varphi(\norm{\rho(g)}_{\B(\H_\rho)}) =1$, $\rho(\varphi\{g\}) = \varphi(\rho(g)) \ne 0$. Thus $\varphi\{g\} \notin \ker_{L^\Phi(G,\omega)}(\rho)$ but $\varphi\{g\} \in \cap_{\pi \in C} \ker_{L^\Phi(G,\omega)}(\pi)$. This contradicts $(iii)$ and hence the result follows.
\end{proof}

The proof of Theorem \ref{thm:4}(ii) is a direct consequence of the proposition and the definition of the hull-kernel topology on each of the respective primitive ideal spaces.

In the following proposition, if $\mathcal{A}$ is a Banach $*$-algebra, then $\Max(\mathcal{A})$ will denote the space of all maximal closed two-sided ideals of $\mathcal{A}$ equipped with the hull-kernel topology.

\begin{prop}
Let $G$ be a locally elliptic group, $(\Phi, \Psi)$ a complementary pair of Young functions with $\Phi \in \Delta_2$, and $\omega$ an $L^\Phi$-weight on $G$ with $1/\omega \in L^\Psi(G)$. Suppose that $L^\Phi(G,\omega)$ is Hermitian. Then, the following are equivalent:
\begin{enumerate}[(i)]
   \item $\Prim(C^*(G)) = \Prim_*(C^*(G))$ is $T_1$; \
   \item $\Prim_*(L^1(G))$ is $T_1$; \
   \item $\Prim_*(L^\Phi(G,\omega))$ is $T_1$; \
   \item $\Prim_*(L^\Phi(G,\omega)) \subseteq \Max(L^\Phi(G,\omega))$; \
   \item $\Prim_*(L^\Phi(G,\omega)) = \Max(L^\Phi(G,\omega))$.
\end{enumerate}
\end{prop}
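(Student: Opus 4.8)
The plan is to leverage the $*$-regularity of $L^\Phi(G,\omega)$ established in Theorem \ref{thm:4}(ii), together with the observation that the standing Hermitian hypothesis upgrades the weakly-Wiener property of Theorem \ref{thm:3} to the full Wiener property. By Theorem \ref{thm:4}(ii) the three spaces $\Prim(C^*(G))$, $\Prim_*(L^1(G))$ and $\Prim_*(L^\Phi(G,\omega))$ are mutually homeomorphic, and since being $T_1$ is a purely topological property preserved under homeomorphism, this yields the equivalence of (i), (ii) and (iii) at once. This reduces the problem to relating (iii) with the ideal-theoretic statements (iv) and (v) inside the single algebra $\mathcal{A} := L^\Phi(G,\omega)$.

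The conceptual heart of the argument is the standard description of point closures in the hull-kernel topology. For $P \in \Prim_*(\mathcal{A})$ one has $\overline{\{P\}} = \hull_*(\ker(\{P\})) = \{Q \in \Prim_*(\mathcal{A}) : P \subseteq Q\}$, so $\{P\}$ is closed precisely when $P$ is maximal among primitive ideals with respect to inclusion. Consequently (iii) is equivalent to the assertion that no element of $\Prim_*(\mathcal{A})$ is properly contained in another element of $\Prim_*(\mathcal{A})$. I would record this reformulation first, since both remaining implications are phrased most cleanly in terms of it, and I would also note that elements of $\Prim_*(\mathcal{A})$ are genuinely proper ideals, being kernels of nonzero irreducible representations.

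Next I would dispose of the equivalence (iv) $\iff$ (v) using the auxiliary observation that $\Max(\mathcal{A}) \subseteq \Prim_*(\mathcal{A})$ holds unconditionally here: given $M \in \Max(\mathcal{A})$, the Wiener property furnishes some $J \in \Prim_*(\mathcal{A})$ with $M \subseteq J$, and since $J$ is a proper ideal the maximality of $M$ forces $M = J \in \Prim_*(\mathcal{A})$. Granting this inclusion, (v) $\implies$ (iv) is trivial, while (iv), which asserts $\Prim_*(\mathcal{A}) \subseteq \Max(\mathcal{A})$, combines with it to give the equality in (v). It then remains to tie (iii) to (iv). For (iv) $\implies$ (iii): if every $P \in \Prim_*(\mathcal{A})$ is a maximal proper closed two-sided ideal, then a fortiori no such $P$ is properly contained in another primitive ideal, which is exactly (iii). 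For the converse (iii) $\implies$ (iv), suppose some $P \in \Prim_*(\mathcal{A})$ fails to be maximal, so that $P \subsetneq I \subsetneq \mathcal{A}$ for a proper closed two-sided ideal $I$; applying the Wiener property to $I$ yields $J \in \Prim_*(\mathcal{A})$ with $I \subseteq J$, whence $P \subsetneq J$ (equality would force $I \subseteq P$), contradicting the reformulation of (iii).

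The main obstacle, such as it is, lies in the implication (iii) $\implies$ (iv): one must produce a primitive ideal lying strictly above $P$ from the mere existence of a larger proper closed ideal, and this is exactly where the Wiener property—available only because of the Hermitian hypothesis via Theorem \ref{thm:3}—is indispensable. The remaining steps are formal once the closure computation and the inclusion $\Max(\mathcal{A}) \subseteq \Prim_*(\mathcal{A})$ are in place.
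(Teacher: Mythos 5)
Your proof is correct and takes essentially the same route as the paper: (i)--(iii) are identified via the homeomorphisms from Theorem \ref{thm:4}, (iv)$\iff$(v) follows from the inclusion $\Max(L^\Phi(G,\omega)) \subseteq \Prim_*(L^\Phi(G,\omega))$ supplied by the Wiener property (Theorem \ref{thm:3} together with the Hermitian hypothesis), and (iii)$\iff$(iv) is proved by the same use of the Wiener property combined with the hull-kernel description of point closures. The only difference is presentational: you record the closure formula $\overline{\{P\}} = \hull_*(\ker(\{P\}))$ explicitly, which the paper uses implicitly.
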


\begin{proof}
The equivalence of $(i)$, $(ii)$ and $(iii)$ follows directly from Theorem \ref{thm:4}. Clearly $(v)$ implies $(iv)$. Since $L^\Phi(G,\omega)$ is Hermitian by assumption, it is Wiener by Theorem \ref{thm:3}, which implies that $\Max(L^\Phi(G,\omega)) \subseteq \Prim_*(L^\Phi(G,\omega))$. Thus $(iv)$ implies $(v)$.

Finally, we need to prove the equivalence of $(iii)$ and $(iv)$. First, let's suppose that $(iii)$ is true and prove $(iv)$. Suppose for a contradiction that there is an ideal $J$ in $\Prim_*(L^\Phi(G,\omega))$ which is not maximal amongst closed two-sided ideals. Then, there exists a proper closed (two-sided) ideal $J' \subseteq L^\Phi(G,\omega)$ properly containing $J$. Since $L^\Phi(G,\omega)$ is Hermitian and hence Wiener by Theorem \ref{thm:3}, $\hull_*(\{J'\})$ is non-empty and hence contains an ideal $I$ in $\Prim_*(L^\Phi(G,\omega))$. In particular, $\hull_*(\{J\}) \supseteq \{I,J\}$, from which it follows, by definition of the hull-kernel topology, that $J$ is not a closed point of $\Prim_*(L^\Phi(G,\omega))$. This contradicts the fact that $\Prim_*(L^\Phi(G,\omega))$ is $T_1$.

We now suppose that $(iv)$ is true and prove $(iii)$. We need to show that under the assumption of $(iv)$, every point in $\Prim_*(L^\Phi(G,\omega))$ is closed. Let $J \in \Prim_*(L^\Phi(G,\omega))$. Then, since $J$ is maximal by the assumption of $(iv)$, it follows that $\hull_*(\ker(\{J\})) = \{J\}$, which implies that $J$ is a closed point in $\Prim_*(L^\Phi(G,\omega))$ by definition of the hull-kernel topology.
\end{proof}

It is an ongoing area of research to determine for which locally elliptic groups $G$ does the topology on $\Prim(C^*(G))$ satisfy the $T_1$ separation axiom. We note that if $G$ is either 2-step-nilpotent, or nilpotent and has a compact open normal subgroup, then it is already known that $\Prim(C^*(G))$ is $T_1$ \cite{Pog83,CM86}.


\section{Minimal ideals of a given hull}\label{sec:7}

Let $G$ be a locally elliptic group, $(\Phi,\Psi)$ a complementary pair of Young functions with $\Phi \in \Delta_2$, and $\omega$ an $L^\Phi$-weight such that $1/\omega \in L^\Psi(G)$. Throughout this section we fix $\gamma$ with $\log_2(4/3) < \gamma < 1$ so that we have a functional calculus of $A_\gamma$ acting on compactly supported self-adjoint functions in $L^\Phi(G,\omega)$. Now, given $C \subseteq \Prim_*(L^\Phi(G,\omega))$ closed, we define the following set, which we view as a subset of $L^\Phi(G,\omega)$:
\begin{align*} m(C) := \{ &\varphi\{f\} : f=f^* \in C_c(G), \norm{f}_{L^1(G)} \le 1, \varphi \in A_\gamma, \varphi = 0 \text{ on a} \\ &\text{neighbourhood of } [-\sup_{\ker(\pi) \in C}\norm{\pi(f)}_{\B(\H_\pi)}, \sup_{\ker(\pi) \in C} \norm{\pi(f)}_{\B(\H_\pi)}] \}.   \end{align*}
Let $j(C)$ denote the closed two-sided ideal of $L^\Phi(G,\omega)$ generated by the set $m(C)$. Then, a similar argument to Lemma 1 and Theorem 1 in \cite{Lud79b} give the following theorem. We repeat the proof for completeness and to clarify the small changes that need to be made to the argument.

\begin{thm}\label{thm:minideal}
Let $G$ be a locally elliptic group, $(\Phi,\Psi)$ a complementary pair of Young functions with $\Phi \in \Delta_2$, and $\omega$ an $L^\Phi$-weight such that $1/\omega \in L^\Psi(G)$. Suppose that $L^\Phi(G,\omega)$ is Hermitian. Then, for every closed set $C \subseteq \Prim_*(L^\Phi(G,\omega))$, the closed two-sided ideal $j(C) \subseteq L^\Phi(G,\omega)$ satisfies the following properties:
\begin{enumerate}[(i)]
   \item $\hull_*(j(C)) = C$; \
   \item For every closed two-sided ideal $I \subseteq L^\Phi(G,\omega)$ with $\hull_*(I) \subseteq C$, $j(C) \subseteq I$.
\end{enumerate}
\end{thm}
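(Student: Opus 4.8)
The plan is to establish the two properties separately, and for (i) I would prove both inclusions directly from the functional calculus of Section \ref{sec:5}. For $C\subseteq\hull_*(j(C))$, fix $\ker_{L^\Phi(G,\omega)}(\rho)\in C$ and a generator $\varphi\{f\}\in m(C)$, and set $s:=\sup_{\ker(\pi)\in C}\norm{\pi(f)}_{\B(\H_\pi)}$. Since $\rho(f)$ is a self-adjoint operator with $\norm{\rho(f)}_{\B(\H_\rho)}\le s$, its spectrum lies in $[-s,s]$, where $\varphi$ vanishes; hence $\rho(\varphi\{f\})=\varphi(\rho(f))=0$ by the compatibility of the $A_\gamma$-calculus with $*$-representations. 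Thus $m(C)\subseteq\ker_{L^\Phi(G,\omega)}(\rho)$, and as the latter is a closed two-sided ideal, $j(C)\subseteq\ker(\rho)$. For the reverse inclusion, suppose $\ker(\rho)\notin C$; closedness of $C$ gives $\ker(C)\not\subseteq\ker(\rho)$, which is exactly the failure of condition (iii) of Proposition \ref{prop:reg} for the set $\{\pi:\ker(\pi)\in C\}$. Hence (iv) fails, and replacing the resulting function by $g^**g$ and rescaling produces a self-adjoint $g\in C_c(G)$ with $\norm{g}_{L^1(G)}\le1$, $\rho(g)\ge0$, and $\sup_{\ker(\pi)\in C}\norm{\pi(g)}_{\B(\H_\pi)}<\norm{\rho(g)}_{\B(\H_\rho)}$. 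Proposition \ref{prop:Agamma} then supplies $\varphi\in A_\gamma$ vanishing on a neighbourhood of $[-s_g,s_g]$ (with $s_g$ the left supremum) and equal to $1$ at $\norm{\rho(g)}_{\B(\H_\rho)}$; then $\varphi\{g\}\in m(C)$ while $\rho(\varphi\{g\})=\varphi(\rho(g))\neq0$ (as $\norm{\rho(g)}_{\B(\H_\rho)}\in\sigma(\rho(g))$ by positivity), so $j(C)\not\subseteq\ker(\rho)$.

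For (ii), since $j(C)$ is the closed ideal generated by $m(C)$ and $I$ is closed, it suffices to prove $m(C)\subseteq I$. A short computation shows every element of $m(C)$ is self-adjoint: for real $\varphi$ and self-adjoint $f$ one has $u(inf)^*=u(-inf)$ and $\hat\varphi(-n)=\overline{\hat\varphi(n)}$, whence $(\varphi\{f\})^*=\varphi\{f\}$. Consequently it is enough to show $m(C)\subseteq I\cap I^*$. Now $I\cap I^*$ is a closed two-sided $*$-ideal, and since kernels of irreducible $*$-representations are prime ideals (and self-adjoint, giving $\hull_*(I)=\hull_*(I^*)$), one has $\hull_*(I\cap I^*)=\hull_*(I)\cup\hull_*(I^*)=\hull_*(I)\subseteq C$. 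We may therefore assume that $I$ is a closed $*$-ideal with $\hull_*(I)\subseteq C$. Passing to the quotient $*$-algebra $B:=L^\Phi(G,\omega)/I$ with quotient $*$-homomorphism $q$, the element $\bar f:=q(f)$ is self-adjoint, and expanding the defining series termwise gives $q(\varphi\{f\})=\sum_{n\in\mathbb{Z}}\hat\varphi(n)\,u(in\bar f)$; the task is to show this vanishes. Because spectra can only shrink under $q$, every self-adjoint element of $B$ lifts to a self-adjoint element of $L^\Phi(G,\omega)$ and hence has real spectrum, so $B$ is Hermitian. For a self-adjoint element of a Hermitian algebra the spectral radius equals the enveloping $C^*$-seminorm, so $\nu_B(\bar f)=\sup_{\pi\in\widehat B}\norm{\pi(\bar f)}_{\B(\H_\pi)}$. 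As $\widehat B$ consists precisely of the irreducible $*$-representations $\pi$ of $L^\Phi(G,\omega)$ with $\ker(\pi)\in\hull_*(I)\subseteq C$, this supremum is at most $s$, and self-adjointness of $\bar f$ gives $\sigma_B(\bar f)\subseteq[-s,s]$, which lies in the open set on which $\varphi$ vanishes.

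The remaining step is to deduce $q(\varphi\{f\})=0$ from the vanishing of $\varphi$ on a neighbourhood of $\sigma_B(\bar f)$, and this locality property is the main obstacle; it is the analogue of \cite[Lemma 1]{Lud79b}. I would prove it by showing that the continuous homomorphism $\psi\mapsto\psi\{\bar f\}$ from $A_\gamma$ into $B$ has support contained in $\sigma_B(\bar f)$, and then invoking the regularity of $A_\gamma$ from Proposition \ref{prop:Agamma}: cover the compact support of $\varphi$ by finitely many intervals disjoint from $\sigma_B(\bar f)$, take a subordinate partition of unity in $A_\gamma$, and reduce to showing that a function supported near a single point $\lambda\notin\sigma_B(\bar f)$ is annihilated by the calculus. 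This last localisation is where the genuine difficulty lies: it rests on the non-quasianalyticity of the weight $\omega_\gamma(n)=e^{2|n|^\gamma}$ for $\gamma<1$ (the Beurling--Domar condition $\sum_n|n|^\gamma/n^2<\infty$ of \cite{Dom56}), which is exactly what controls the growth of $\norm{u(in\bar f)}_B$ against the resolvent of $\bar f$ near $\lambda$ and forces the corresponding piece of $\varphi\{\bar f\}$ to vanish. Granting this, every generator $\varphi\{f\}\in m(C)$ lies in $I\cap I^*\subseteq I$, proving $j(C)\subseteq I$.
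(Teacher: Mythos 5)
Your proof of part (i) is correct and follows the same route as the paper: the inclusion $C \subseteq \hull_*(j(C))$ comes from compatibility of the $A_\gamma$-calculus with $*$-representations, and the reverse inclusion from Proposition \ref{prop:reg} together with Proposition \ref{prop:Agamma}. Your extra step of replacing $g$ by a rescaled $g^* * g$ so that $\rho(g) \ge 0$, and hence $\norm{\rho(g)}_{\B(\H_\rho)} \in \sigma(\rho(g))$, actually repairs a point the paper glosses over, since for a merely self-adjoint operator only one of $\pm\norm{\rho(g)}_{\B(\H_\rho)}$ need lie in the spectrum.

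Part (ii) is where the proposal breaks down, and you have named the problem yourself: your whole argument rests on the claim that $\varphi$ vanishing on a neighbourhood of $\sigma_B(\bar f)$ forces $q(\varphi\{f\})=0$, and this is exactly the step you do not prove (``Granting this\dots''). The preparatory steps are sound (self-adjointness of the elements of $m(C)$, reduction to $*$-ideals, Hermitianness of the quotient $B$, and the Pt\'ak identity giving $\sigma_B(\bar f)\subseteq[-s,s]$), but the missing step is a genuine theorem, not a verification: what Gelfand theory gives for free is only that, in the closed commutative subalgebra of $B$ generated by the $u(in\bar f)$, every character sends $\varphi\{\bar f\}$ to $\varphi(\lambda)$ for some $\lambda\in\sigma_B(\bar f)$, so vanishing of $\varphi$ near the spectrum yields quasi-nilpotence of $q(\varphi\{f\})$, not vanishing. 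Upgrading quasi-nilpotence to zero is precisely the Domar-type locality theorem, and Proposition \ref{prop:Agamma} supplies bump functions in $A_\gamma$ but none of the ideal-theoretic regularity machinery your sketch would need; so the argument cannot be completed from the resources of the paper as you have set it up. The paper's own proof of (ii) sidesteps the locality question entirely with a factorization trick: choose $\psi \in A_\gamma$ identically $1$ on $\supp(\varphi)$ and still vanishing on a neighbourhood of $[-s,s]$, so that $\psi\{f\} \in m(C)$ and $\psi\{f\}*\varphi\{f\} = \varphi\{f\}$; if $\varphi\{f\} \notin I$, the image of $\psi\{f\}$ in $L^\Phi(G,\omega)/I$ has $1$ in its spectrum, hence is not in the radical, which produces an algebraically irreducible representation of $L^\Phi(G,\omega)$ annihilating $I$ but not $\psi\{f\}$; Hermitianness, via Proposition \ref{prop:sym}(iii), makes its kernel an element of $\Prim_*(L^\Phi(G,\omega))$ lying in $\hull_*(I) \subseteq C$, contradicting $\psi\{f\} \in m(C)$. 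This is the content of \cite[Lemma 2]{Lud79b}, which the paper invokes. Incidentally, your localization $\sigma_B(\bar f)\subseteq[-s,s]$ could be combined with this same factorization to close the gap without Domar theory: by Shilov spectral permanence (the spectrum of $u(i\bar f)$ lies on a proper arc of a circle, so its complement is connected), every character of the commutative subalgebra kills $\psi\{\bar f\}$, making $\psi\{\bar f\}$ quasi-nilpotent in $B$ and contradicting $1\in\sigma_B(\psi\{\bar f\})$ --- but that is a different argument from the one you propose, and as written your proof of (ii) is incomplete.
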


\begin{proof}
$(i)$ Let $\ker_{L^\Phi(G,\omega)}(\pi) \in C$ and $\varphi\{f\} \in m(C)$. Then, since $\pi(\varphi\{f\}) = \varphi(\pi(f))$, and $\varphi$ is zero on the spectrum of $\pi(f)$, it follows that $\pi(\varphi\{f\}) = 0$. Thus, $m(C) \subseteq \ker_{L^\Phi(G,\omega)}(\pi)$ for all $\ker_{L^\Phi(G,\omega)}(\pi) \in C$, and it follows that $\hull_*(m(C)) \subseteq C$, which implies that $\hull_*(j(C)) \subseteq C$.

Conversely, suppose that $\ker_{L^\Phi(G,\omega)}(\rho) \notin C$ for some $\rho \in \widehat{L^\Phi(G,\omega)}$. Then, by Proposition \ref{prop:reg}, there exists $f \in C_c(G)$ self-adjoint with $\norm{f}_{L^1(G)} <1$ such that
\begin{displaymath} \sup_{\ker(\pi) \in C} \norm{\pi(f)}_{\B(\H_\pi)} < \norm{\rho(f)}_{\B(\H_\rho)}. \end{displaymath}
Choose $\varphi \in A_\gamma$ such that $\varphi$ is zero in a neighbourhood of the interval
\begin{displaymath} [-\sup_{\ker(\pi) \in C}\norm{\pi(f)}_{B(\H_\pi)}, \sup_{\ker(\pi) \in C}\norm{\pi(f)}_{B(\H_\pi)}]\end{displaymath} 
and $\varphi(\norm{\rho(f)}_{\B(\H_\rho)}) =1$. It then follows that $\rho(\varphi\{f\}) = \varphi(\rho(f)) \ne 0$ and hence $\ker(\rho)$ does not contain $m(C)$. Thus it follows that $\hull_*(j(C)) = C$.

$(ii)$ Let $I \subseteq L^\Phi(G,\omega)$ be a closed two-sided ideal with $\hull_*(I) \subseteq C$. Let $\varphi\{f\} \in m(C)$. Choose non-trivial $\psi \in A_\gamma$ which is identically 1 on the support of $\varphi$, which exists by Proposition \ref{prop:Agamma}. Then, $\psi\{f\} * \varphi\{f\} = \varphi\{f\}$ since $\psi\varphi = \varphi$. Also, it is clear that $\psi\{f\} \in m(C)$ and $\hull_*(\{\psi\{f\}\}) \supseteq \hull_*(m(C)) = C$. Then \cite[Lemma 2]{Lud79b} gives the result.
\end{proof}


\section{Examples and open questions}\label{sec:8}

\subsection{Locally elliptic groups with the bounded index property}

We start with the following definition of a locally elliptic group having the bounded index property.

\begin{dfn}
Let $G$ be a locally elliptic group. The group $G$ has the \hlight{bounded index property} if there exist compact open subgroups $K_i \le G$ for each $i \in \mathbb{N}$ such that $K_i \le K_{i+1}$ for each $i$, $G = \bigcup_{i=1}^\infty K_i$ and $\sup_{n \in \mathbb{N}} [K_{n+1}:K_n] < \infty$.
\end{dfn}

We will give a number of examples of locally elliptic groups with the bounded index property throughout this section. First, we would like to prove the following result, which gives a canonical way of constructing $L^p$-weights, or more generally, $L^\Phi$-weights on these groups for some Young function $\Phi$.

\begin{prop}
Let $G = \bigcup_{i=1}^\infty K_i$ be a locally elliptic group and suppose that $M := \sup_{n \in \mathbb{N}}[K_{n+1}:K_n] < \infty$. Given $f \in \ell^1(\mathbb{N})$ and $q \in [1,\infty)$, define the sub-additive weight
\begin{displaymath} \omega_{f,q} := \chi_{K_1} + \sum_{n=1}^\infty \bigg(\frac{M^{n}}{f(n)}\bigg)^{1/q} \chi_{K_{n+1}\setminus K_n}. \end{displaymath}
Then, $1/\omega_{f,q} \in L^q(G)$. In particular, we have the following:
\begin{enumerate}[(i)]
   \item If $q=1$, then $\omega_{f,1}$ is an $L^\Phi$-weight on $G$ for any Young function $\Phi$;
   \item If $q >1$ and $p \in (1,\infty)$ such that $1/p + 1/q=1$, then $\omega_{f,q}$ is an $L^p$-weight.
\end{enumerate}
\end{prop}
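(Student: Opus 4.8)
The plan is to reduce the proposition to two elementary facts about $\omega_{f,q}$, namely that it is a sub-additive weight and that $1/\omega_{f,q}\in L^q(G)$, and then to feed these into the Banach $*$-algebra criteria already at hand: Proposition~\ref{prop:Orc2} for part~(i) and Lemma~\ref{lem:lpw} for part~(ii). Throughout I would normalise the Haar measure so that $\mu(K_1)=1$ and assume $f$ is strictly positive with $f(n)\le M^{n}$ for all $n$ (the latter is automatic for large $n$, since $f\in\ell^1$ forces $f(n)\to 0$ while $M\ge 2$, and modifying finitely many terms of $f$ affects nothing), so that $\omega_{f,q}$ genuinely takes values in $[1,\infty)$.

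First I would verify that $\omega_{f,q}$ is a sub-additive weight. It has exactly the ``shell'' shape of the weights $\omega_{\mathbf{a}}$ from Section~\ref{sec:3}, with $a_1=1$ and $a_{n+1}=(M^{n}/f(n))^{1/q}$; the only point to check is that this coefficient sequence is non-decreasing, which amounts to $f(n+1)\le M f(n)$ and holds in particular when $f$ is non-increasing. Granting this, the nested structure $K_i\le K_{i+1}$ gives $xy\in K_{\max\{i,j\}}$ whenever $x\in K_i$ and $y\in K_j$, so $\omega_{f,q}(xy)\le\max\{\omega_{f,q}(x),\omega_{f,q}(y)\}$; since $\omega_{f,q}\ge 1$ this yields both submultiplicativity and sub-additivity at once, while symmetry and local boundedness are immediate from the facts that $x$ and $x^{-1}$ lie in the same shell and that every compact set sits inside some $K_N$.

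The heart of the argument is the integrability estimate, which is a geometric-series computation. Writing
\begin{displaymath} 1/\omega_{f,q} = \chi_{K_1} + \sum_{n=1}^{\infty}\Big(\frac{f(n)}{M^{n}}\Big)^{1/q}\chi_{K_{n+1}\setminus K_n}, \end{displaymath}
one integrates the $q$-th power shell by shell and uses $\mu(K_{n+1})=\prod_{j=1}^{n}[K_{j+1}:K_j]\,\mu(K_1)\le M^{n}$, hence $\mu(K_{n+1}\setminus K_n)\le M^{n}$, to obtain
\begin{align*} \int_G (1/\omega_{f,q})^{q}\,d\mu &= \mu(K_1) + \sum_{n=1}^{\infty}\frac{f(n)}{M^{n}}\,\mu(K_{n+1}\setminus K_n) \\ &\le 1 + \sum_{n=1}^{\infty} f(n) = 1 + \norm{f}_{\ell^1(\mathbb{N})} < \infty. \end{align*}
Thus $1/\omega_{f,q}\in L^q(G)$, which is the displayed claim.

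Finally I would settle the two cases. For part~(ii), with $q>1$ and $p$ its conjugate exponent, $\omega_{f,q}$ is a locally-integrable sub-additive weight with $\omega_{f,q}^{-1}\in L^q(G)$, so Lemma~\ref{lem:lpw} immediately gives that $L^p(G,\omega_{f,q})$ is a Banach $*$-algebra, i.e.\ $\omega_{f,q}$ is an $L^p$-weight. For part~(i), with $q=1$, one additionally observes $1/\omega_{f,1}\le 1$, so that $1/\omega_{f,1}\in L^1(G)\cap L^\infty(G)\subseteq L^\Psi(G)$ for every Young function $\Psi$; since $\omega_{f,1}$ is sub-additive, Proposition~\ref{prop:Orc2} then shows $L^\Phi(G,\omega_{f,1})$ is a Banach $*$-algebra for every Young function $\Phi$. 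The only genuinely delicate point, as flagged above, is the monotonicity of the coefficient sequence underlying sub-additivity; the rest is the geometric estimate $\mu(K_{n+1}\setminus K_n)\le M^{n}$ pairing off precisely against the $\ell^1$-summability of $f$.
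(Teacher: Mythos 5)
Your proof is correct and follows essentially the same route as the paper's: the same normalisation $\mu(K_1)=1$, the same shell-by-shell estimate $\mu(K_{n+1}\setminus K_n)\le M^{n}$ paired against $\norm{f}_{\ell^1(\mathbb{N})}$, and then Lemma~\ref{lem:lpw} for part~(ii) and Proposition~\ref{prop:Orc2} for part~(i). The one place you go beyond the paper is in checking that $\omega_{f,q}$ is actually a sub-additive weight: the paper's statement simply labels it as such and its proof never verifies this, whereas you correctly note that one needs the coefficient sequence $\big((M^{n}/f(n))^{1/q}\big)_n$ to be non-decreasing (e.g.\ $f(n+1)\le Mf(n)$) and $\omega_{f,q}\ge 1$ (e.g.\ $f(n)\le M^{n}$), neither of which follows from $f\in\ell^1(\mathbb{N})$ alone; so the extra hypotheses you impose on $f$ (or some equivalent monotonicity assumption) are genuinely needed for the proposition as stated, and flagging this is a real improvement rather than a detour.
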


\begin{proof}
The inverse of $\omega_{f,q}$ is given by
\begin{displaymath} 1/\omega_{f,q} = \chi_{K_1} + \sum_{n=1}^\infty \bigg(\frac{f(n)}{M^{n}}\bigg)^{1/q}\chi_{K_{n+1} \setminus K_n}. \end{displaymath}

We now show that $1/\omega_{f,q} \in L^q(G)$. Let $\mu$ be the Haar measure on $G$. We may assume that $\mu$ has been normalised so that $\mu(K_1) = 1$. Then,
\begin{displaymath} \mu(K_2 \setminus K_1) = [K_2:K_1] \mu(K_1) - \mu(K_1) < M \mu(K_1) = M,  \end{displaymath}
and by induction, for $n > 2$,
\begin{displaymath} \mu(K_{n+1} \setminus K_{n}) = [K_{n+1}:K_{n}] \mu(K_{n}) - \mu(K_{n}) < M\mu(K_{n}) \le M^n.  \end{displaymath}
One then computes 
\begin{align*}
\int_G 1/\omega_{f,q}^{q}(x) \: dx &= \int_{K_1} 1/\omega_{f,q}^{q}(x) \: dx + \sum_{n=1}^\infty \int_{K_{n+1} \setminus K_n} 1/\omega_{f,q}^{q}(x) \: dx \\
&< 1 + \sum_{n=1}^\infty f(n) < \infty.
\end{align*}
Thus, $1/\omega_{f,q} \in L^q(G)$. If $q>1$, it follows by Lemma \ref{lem:lpw} that $\omega_{f,q}$ is an $L^p$-weight on $G$ since it is sub-additive, which is $(ii)$. On the other hand, if $q=1$ and $(\Phi,\Psi)$ a complementary pair of Young functions, then $1/\omega_{f,1} \in L^1(G) \cap L^\infty(G) \subseteq L^\Psi(G)$, and since $\omega_{f,1}$ is sub-additive, it follows by Proposition \ref{prop:Orc2} that $L^\Phi(G,\omega_{f,q})$ is a Banach $*$-algebra. Thus $\omega_{f,1}$ is an $L^\Phi$-weight for any Young function $\Phi$. This is $(i)$.
\end{proof}

\subsection{Contraction groups}

We begin with the definition of a contraction group.

\begin{dfn}
Let $G$ be a locally compact group and $\alpha$ a bi-continuous automorphism of $G$. The pair $(G,\alpha)$ is called a \hlight{contraction group} if for all $x \in G$, $\alpha^n(x) \rightarrow \id_G$ as $n \rightarrow \infty$. 
\end{dfn}

We will now give some examples of contraction groups.

\begin{exm}\label{exm:abcont}
\begin{enumerate}[(i)]
   \item The additive group $(\mathbb{Q}_p,+)$ with automorphism multiplication by $p$. This example can be extended to any local field.
   \item Let $F$ be a finite group. The group $(\bigoplus_{\mathbb{Z}_{<0}} F) \times (\prod_{\mathbb{Z}_{\ge 0}} F)$ equipped with the ``right-shift'' automorphism is a torsion contraction group. \
   \item The group $U_n(\mathbb{Q}_p)$ of $n$-dimensional unipotent matrices over $\mathbb{Q}_p$ equipped with the automorphism which is conjugation by the diagonal matrix 
   \begin{displaymath} \text{diag}(1,p,p^2,\dots,p^{n-1}).\end{displaymath}
   The field $\mathbb{Q}_p$ can also be replaced with any local field in this example, provided you change the automorphism appropriately.
\end{enumerate}
\end{exm}

Contraction groups have a fundamental importance in the structure theory of totally disconnected locally compact groups \cite{BW04} and their structure theory has been studied extensively in the papers \cite{GW10,GW21,GW21b}. The following structure theorem about contraction groups is key to their study.

\begin{thm}\cite{GW10,GW21}\label{thm:contgrp}
Let $(G,\alpha)$ be a contraction group. Then, there exists distinct primes $p_1, \dots, p_n$ and unipotent linear algebraic groups $G_{p_1}, \dots, G_{p_n}$ over $\mathbb{Q}_{p_1}, \dots, \mathbb{Q}_{p_n}$ respectively such that
\begin{displaymath} G \cong G_0 \times G_{p_1} \times \dots \times G_{p_n} \times \tor(G)  \end{displaymath}
where $G_0$ denotes the connected component of the identity in $G$ and $\tor(G)$ the subgroup of torsion elements. Furthermore, $G_0$ is a connected simply-connected nilpotent real Lie group.
\end{thm}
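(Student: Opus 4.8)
The plan is to peel off the connected part of $G$ first and then resolve the totally disconnected remainder into its $p$-adic and torsion constituents, assembling everything into a direct product at the end. First I would note that the identity component $G_0$ is a closed characteristic subgroup and hence $\alpha$-invariant, with $\alpha|_{G_0}$ again contractive. By a theorem of Siebert, a connected locally compact group carrying a contractive automorphism is a simply-connected nilpotent real Lie group, which is exactly the asserted structure of $G_0$; the nilpotency can be seen directly from the fact that the differential $d\alpha$ is a linear contraction of $\mathfrak{g}$, so all of its eigenvalues lie strictly inside the unit disc and every sufficiently long iterated bracket, living in a generalised eigenspace for a product of such eigenvalues, must vanish.

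Next I would upgrade the extension $1 \to G_0 \to G \to G/G_0 \to 1$ to a direct product decomposition $G \cong G_0 \times (G/G_0)$. The quotient $D := G/G_0$ is a totally disconnected contraction group, and the point is to build an $\alpha$-equivariant section $D \to G$ whose image centralises $G_0$. Here the contraction hypothesis does the real work: the relevant extension and commutator obstructions are killed by a convergence argument along the forward orbit $(\alpha^n)_{n}$, which converges precisely because $\alpha$ contracts. Granting this, it remains to decompose the totally disconnected factor $D$.

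For $D$ I would first fix, via the theory of tidy subgroups \cite{BW04}, a compact open subgroup $U$ with $\alpha(U) \subseteq U$; contraction then gives $\bigcap_{n} \alpha^{n}(U) = \{e\}$ and $\bigcup_{n} \alpha^{-n}(U) = D$, which incidentally re-exhibits $D$ as locally elliptic. Since $G_0$ is torsion-free, the torsion subgroup $\tor(D)$ coincides with $\tor(G)$; it is a closed $\alpha$-invariant subgroup, and I would split $1 \to \tor(D) \to D \to D/\tor(D) \to 1$ as a direct product by the same contraction-driven mechanism as before. The torsion-free quotient $D/\tor(D)$ is then analysed through its composition series: invoking the classification of simple totally disconnected contraction groups from \cite{GW10,GW21}, every torsion-free simple subquotient is isomorphic to some $\mathbb{Q}_p$, and grouping the subquotients by residue characteristic yields a finite direct product $D/\tor(D) \cong G_{p_1} \times \cdots \times G_{p_n}$ with each $G_{p_i}$ a torsion-free $p_i$-adic analytic contraction group. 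Finally, a contractive $p$-adic analytic group is nilpotent and its contraction realises it as the group of $\mathbb{Q}_{p_i}$-points of a unipotent linear algebraic group, delivering the claimed form of the factors and, together with the two splittings, the full decomposition $G \cong G_0 \times G_{p_1} \times \cdots \times G_{p_n} \times \tor(G)$.

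The main obstacle is concentrated in the totally disconnected part and is of two kinds. The milder issue is promoting the short exact sequences for $G_0$ and for $\tor(D)$ from merely split to genuine direct products, i.e.\ showing the relevant actions are trivial; this is where the contraction hypothesis must be used carefully rather than formally. The genuinely deep input, which I would treat as a black box from \cite{GW10,GW21}, is the classification of the simple totally disconnected contraction groups into the torsion-free type $\mathbb{Q}_p$ and the torsion type, together with the realisation of the torsion-free $p$-adic factors as unipotent algebraic groups; this rests on a delicate analysis of composition series and scale-function machinery and admits no short self-contained proof.
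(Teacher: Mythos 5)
First, a point of calibration: the paper does not prove this statement at all --- it is imported wholesale, the splitting off of $G_0$ being Siebert's theorem and the decomposition of the totally disconnected part being the main results of \cite{GW10,GW21} --- so your proposal can only be measured against the literature that it (and the paper) cites. At that level your architecture is the correct one: peel off the identity component, then separate torsion from torsion-free, then one factor per prime, with the classification of simple contraction groups and the unipotence of the $p$-adic factors taken as black boxes.

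However, the steps you claim to carry out yourself contain genuine gaps. The central one is the ``contraction-driven mechanism'' you invoke twice to promote $\alpha$-equivariant extensions to direct products: no such mechanism exists. A concrete counterexample is the shift group over $\mathbb{Z}/p^2\mathbb{Z}$, i.e.\ $C = \bigl(\bigoplus_{\mathbb{Z}_{<0}} \mathbb{Z}/p^2\mathbb{Z}\bigr) \times \bigl(\prod_{\mathbb{Z}_{\ge 0}} \mathbb{Z}/p^2\mathbb{Z}\bigr)$ with the right-shift automorphism: the elements killed by $p$ form a closed $\alpha$-stable normal subgroup isomorphic to the $\mathbb{Z}/p\mathbb{Z}$-shift group, with quotient again the $\mathbb{Z}/p\mathbb{Z}$-shift group, yet $C$ is not their direct product, since $C$ has elements of order $p^2$. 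So $\alpha$-equivariant extensions of contraction groups by contraction groups need not split, and the same example shows the inference ``all composition factors of this piece are of type $X$, hence the piece is a direct product of groups of type $X$'' is invalid --- which is exactly the inference you use to decompose $D/\tor(D)$ into its $p$-adic factors. This is why \cite{GW10} instead defines the factors intrinsically (the torsion elements, and for each $p$ the elements with the appropriate topological $p$-torsion/divisibility behaviour) and proves directly that the multiplication map from their product is an isomorphism. Three further points: the splitting $G \cong G_0 \times (G/G_0)$ is precisely Siebert's theorem, one of the deep inputs, not something dispatched by a convergence remark; the fact that $\tor(G)$ is a closed subgroup at all is itself part of the main theorem of \cite{GW10}, not an assertion available for free; and the simple torsion-free contraction groups are finite-dimensional $p$-adic vector spaces $\mathbb{Q}_p^d$ with a contractive linear automorphism, not copies of $\mathbb{Q}_p$, while analyticity of the $p$-adic factors does not follow merely from their composition factors being $p$-adic. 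Your outline is a fair roadmap of how the result is assembled, but each of the points above is where the actual content of the cited theorems lies.
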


If we further assume that the contraction group $G$ is totally disconnected, then we may choose $U \le G$ a compact open subgroup, and it follows from $\alpha$ being contractive that $G = \bigcup_{n=1}^\infty \alpha^{-n}(U)$. Thus, $G$ is locally elliptic since $\alpha^n(U)$ is compact for each $n$, and $G$ has the bounded index property since the indices $[\alpha^{-n-1}(U):\alpha^{-n}(U)]$ are constant with $n$ varying. We state this below.

\begin{prop}
Every totally disconnected locally compact contraction group is locally elliptic and has the bounded index property.
\end{prop}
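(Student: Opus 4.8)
The plan is to use van Dantzig's theorem to produce a compact open subgroup, to upgrade it to one that is forward-invariant under $\alpha$, and then to read off both conclusions from the fact that $\alpha^n \to \id_G$ pointwise while $\alpha$, being an automorphism, preserves indices. By the characterisation of locally elliptic groups due to Platonov \cite{Pla66}, it suffices to exhibit an increasing sequence of compact open subgroups whose union is $G$.

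First I would fix a compact open subgroup of $G$; such a subgroup exists because $G$ is totally disconnected and locally compact (van Dantzig). The key step is then to replace it by a compact open subgroup $U$ satisfying $\alpha(U) \subseteq U$. This is where contractivity is genuinely used: since $\alpha^n \to \id_G$ uniformly on compact sets, any compact open subgroup $W$ satisfies $\alpha^N(W) \subseteq W$ for some $N$, and a standard argument in the structure theory of contraction groups (see \cite{BW04}) then yields a compact open $U$ with $\alpha(U) \subseteq U$. I expect this forward-invariant subgroup to be the main obstacle, since the pointwise contraction hypothesis alone does not immediately furnish a single subgroup absorbed by $\alpha$; one really needs the uniform-on-compacts behaviour of the iterates.

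Granting such a $U$, the inclusion $\alpha(U) \subseteq U$ is equivalent to $U \subseteq \alpha^{-1}(U)$, so $(\alpha^{-n}(U))_{n \ge 0}$ is an increasing sequence of compact open subgroups, each $\alpha^{-n}(U)$ being compact open because $\alpha$ is a bi-continuous automorphism. To see that their union is all of $G$, take $x \in G$; by contractivity $\alpha^n(x) \to \id_G$, and since $U$ is an open neighbourhood of the identity there is $N$ with $\alpha^N(x) \in U$, i.e.\ $x \in \alpha^{-N}(U)$. Hence $G = \bigcup_{n \ge 0} \alpha^{-n}(U)$, and after reindexing this is a decomposition of the required form, so $G$ is locally elliptic.

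For the bounded index property I would argue that the consecutive indices are not merely bounded but constant. Since $\alpha^{-n}$ is an automorphism of $G$, it carries the pair $U \le \alpha^{-1}(U)$ onto $\alpha^{-n}(U) \le \alpha^{-(n+1)}(U)$ and preserves the index, whence $[\alpha^{-(n+1)}(U):\alpha^{-n}(U)] = [\alpha^{-1}(U):U]$ for every $n$. This common value is finite because $U$ is an open subgroup of the compact group $\alpha^{-1}(U)$, and therefore $\sup_n [\alpha^{-(n+1)}(U):\alpha^{-n}(U)] = [\alpha^{-1}(U):U] < \infty$, which is precisely the bounded index property.
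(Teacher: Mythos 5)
Your proof is correct and follows essentially the same route as the paper: the increasing decomposition $G = \bigcup_{n \ge 0} \alpha^{-n}(U)$ together with the observation that applying the automorphism $\alpha^{-n}$ makes the consecutive indices $[\alpha^{-(n+1)}(U):\alpha^{-n}(U)]$ constant, hence bounded. If anything you are more careful than the paper, whose proof starts from an arbitrary compact open subgroup and leaves implicit the forward-invariance $\alpha(U) \subseteq U$ needed for the sequence $(\alpha^{-n}(U))_{n}$ to be nested (and for the indices to even be defined); your explicit appeal to uniform contraction on compact sets and the standard argument from \cite{BW04} supplies exactly that step.
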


By Theorem \ref{thm:contgrp}, every torsion-free contraction group is nilpotent and hence it is Hermitian by \cite{Lud79}. However, in contrast, torsion contraction groups need not be nilpotent or even solvable. For example, if $F$ is a non-solvable finite group, then the group
\begin{displaymath}  G:= \bigg( \bigoplus_{\mathbb{Z}_{<0}} F \bigg) \times \bigg( \prod_{\mathbb{Z}_{\ge0}} F \bigg) \end{displaymath}
is a non-solvable torsion contraction group when equipped with the right-shift automorphism as its contractive automorphism. This group, however, is also Hermitian since it is an $[FC]^-$-group \cite[Theorem 2.6]{LM77}. But torsion contraction groups need not be $[FC]^-$-groups either, with the Heisenberg group over $\mathbb{F}_p(\!(t)\!)$ providing an example of a torsion contraction group that is not an $[FC]^-$-group. 

We thus pose the following question, which is an interesting question in the context of the harmonic analysis of totally disconnected locally compact groups.

\begin{que}\label{que:1}
Is every totally disconnected locally compact contraction group Hermitian and hence Wiener? 
\end{que}

Furthermore, it is a non-trivial question as to whether the direct product of two Hermitian groups is Hermitian (\textit{c.f.} \cite{Bon61}). Thus, we also pose the following question which one would need to understand to answer Question \ref{que:1}.

\begin{que}
If $(G,\alpha)$ is a contraction group and $\tor(G)$ is Hermitian/Wiener, is $G$ also Hermitian/Wiener?
\end{que}

The author is pursuing a solution to these questions on contraction groups in ongoing work, and this work ties in with recent research on the representation theory of contraction groups \cite{Car24,CC25}.

\subsection{Unipotent linear algebraic group over non-archimedean local fields}\label{sec:8.3}
 
Let $k$ be a non-archimedean local field and define $U_n(k)$ to be the group of all upper-triangular $n$-dimensional unipotent matrices over $k$. Given $x \in k$ with absolute value $<1$, one checks that conjugation by the diagonal matrix $\text{diag}(1,x,x^2,\dots,x^{n-1})$ is a contractive automorphism of $U_n(k)$. Thus, it follows that $U_n(k)$ is a locally elliptic group with the bounded index property by the discussion in the previous section, and since these properties pass to closed subgroups, we get the following result.

\begin{prop}
Let $N$ be a group of $n$-dimensional unipotent matrices over a non-archimedean field. The group $N$ is locally-elliptic and has the bounded index property.
\end{prop}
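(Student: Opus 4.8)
The plan is to deduce the statement from the already-treated case of $U_n(k)$ together with the permanence of the two properties under passage to closed subgroups. The key preliminary step, and the one requiring genuine input, is to realize $N$ up to topological isomorphism as a closed subgroup of $U_n(k)$. By Kolchin's triangularization theorem, a group of unipotent matrices in $GL_n(k)$ stabilizes a complete flag defined over $k$, so in a suitable basis it becomes upper unitriangular; equivalently there is $g \in GL_n(k)$ with $gNg^{-1} \le U_n(k)$. (When $N$ is presented as the $k$-points of a unipotent linear algebraic group, this closed embedding into $U_n$ is part of the structure theory of unipotent groups.) Since conjugation by $g$ is a topological automorphism of $GL_n(k)$ and $N$ is closed, so as to be locally compact, the image $gNg^{-1}$ is a closed subgroup of $U_n(k)$; as local ellipticity and the bounded index property are invariants of topological isomorphism, I may assume from the outset that $N \le U_n(k)$ is closed.

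Recall from the discussion above that $U_n(k)$, being a totally disconnected contraction group, is locally elliptic with the bounded index property; fix an exhaustion $U_n(k) = \bigcup_i K_i$ by compact open subgroups with $M := \sup_i [K_{i+1}:K_i] < \infty$ (one may even take $K_i = \alpha^{-i}(U)$ for a compact open $U$, making the consecutive indices constant). Put $L_i := N \cap K_i$. Since $N$ is closed, each $L_i$ is a closed subset of the compact group $K_i$ and hence compact, and it is open in $N$ because $K_i$ is open in $U_n(k)$; moreover the $L_i$ increase with $\bigcup_i L_i = N$. Thus $N$ is a countable increasing union of compact open subgroups and is therefore locally elliptic.

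It remains to bound the consecutive indices $[L_{i+1}:L_i]$. I would consider the map of coset spaces $L_{i+1}/L_i \to K_{i+1}/K_i$, $xL_i \mapsto xK_i$, which is injective: if $x,y \in L_{i+1}$ satisfy $xK_i = yK_i$ then $y^{-1}x \in K_i$, and since $y^{-1}x \in N$ as well, we get $y^{-1}x \in K_i \cap N = L_i$, whence $xL_i = yL_i$. Hence $[L_{i+1}:L_i] \le [K_{i+1}:K_i] \le M$ for all $i$, so the exhaustion $(L_i)_i$ witnesses the bounded index property for $N$, completing the argument. The only nonroutine ingredient is the opening reduction to a closed subgroup of $U_n(k)$ via Kolchin, together with the assumption that $N$ is closed; everything afterwards is the standard verification that both properties descend to closed subgroups, the sole computation being the injectivity of the coset map used to control the indices.
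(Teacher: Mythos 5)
Your proof is correct and takes essentially the same route as the paper: the paper deduces the proposition from the fact that $U_n(k)$ is a totally disconnected contraction group (hence locally elliptic with the bounded index property, by the preceding section) together with the one-line assertion that "these properties pass to closed subgroups." You simply supply the details that assertion presupposes --- the Kolchin triangularization reducing $N$ to a closed subgroup of $U_n(k)$, and the intersection/coset-map argument $L_i := N \cap K_i$ with $[L_{i+1}:L_i] \le [K_{i+1}:K_i]$ showing both properties descend to closed subgroups.
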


Now let's assume that $N$ is a group of $n$-dimensional unipotent matrices over a $p$-adic field for some prime $p$. Let $\frak{n}$ denote the Lie algebra of $N$ and $\frak{n}^*$ the vector space dual of $\frak{n}$. The group $N$ acts on $\frak{n}^*$ by the so called \hlight{codajoint action}
\begin{displaymath}  (\Ad^*(g)\lambda)(X) := \lambda(\Ad(g^{-1})X) \; \; \; (g \in N, X \in \frak{n}, \lambda \in \frak{n}^*) \end{displaymath}
 where $\Ad$ denotes the usual adjoint action of $N$ on $\frak{n}$.
 
 The following result is well known.
 
\begin{thm}\cite{Mor65,BS08}
Let $N$ be a group of $n$-dimensional unipotent matrices over a $p$-adic field. Then the following hold:
\begin{enumerate}[(i)]
   \item $N$ is CCR;
   \item The spaces $\widehat{N}$, $\Prim(C^*(N))$ and $\frak{n}^*/\Ad^*(N)$ are homeomorphic.
\end{enumerate}
\end{thm}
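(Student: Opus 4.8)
The plan is to prove both statements with the $p$-adic Kirillov orbit method, which applies to a unipotent algebraic group over a local field of characteristic zero. Since $N$ is (the group of $k$-points of) a unipotent algebraic group over the $p$-adic field $k$, it is nilpotent, and when its nilpotency class is smaller than the residue characteristic the truncated Baker--Campbell--Hausdorff series makes $\exp\colon \frak{n} \to N$ a polynomial bijection with polynomial inverse $\log$, intertwining the adjoint action on $\frak{n}$ with conjugation on $N$; in the remaining cases one falls back on the Mackey-machine analysis below, which does not require a global $\exp$. Fix once and for all a nontrivial continuous additive character $\psi\colon k \to \mathbb{T}$, and transport the coadjoint action of $N$ to $\frak{n}^*$, whose orbits are Zariski-closed by the Kostant--Rosenlicht theorem (a unipotent group acting on an affine variety has closed orbits).

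First I would attach to each $\lambda \in \frak{n}^*$ an irreducible unitary representation. Choose a polarization at $\lambda$: a subalgebra $\frak{m} \subseteq \frak{n}$ of maximal dimension subordinate to $\lambda$, i.e. with $\lambda([\frak{m},\frak{m}]) = 0$; nilpotency guarantees $\frak{m}$ exists with $\dim \frak{m} = \tfrac12(\dim \frak{n} + \dim \frak{n}^\lambda)$, where $\frak{n}^\lambda$ is the radical of $(X,Y)\mapsto \lambda([X,Y])$. Put $M := \exp(\frak{m})$, define the unitary character $\chi_\lambda(\exp X) := \psi(\lambda(X))$ of $M$, and set $\pi_\lambda := \operatorname{Ind}_M^N \chi_\lambda$. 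The structural heart of the argument, proved by induction on $\dim \frak{n}$ via the Mackey machine, consists of: (a) $\pi_\lambda$ is irreducible and independent of the polarization; (b) $\pi_\lambda \cong \pi_{\lambda'}$ iff $\lambda,\lambda'$ lie in one coadjoint orbit; (c) every irreducible unitary representation of $N$ is some $\pi_\lambda$. The inductive step picks a one-dimensional central ideal $\frak{z} \subseteq \frak{n}$: representations trivial on $\exp \frak{z}$ descend to $N/\exp \frak{z}$ and fall to the inductive hypothesis, while those with nontrivial central character are reduced by a Stone--von Neumann / Heisenberg argument to a group of dimension two less. Statements (a)--(c) furnish a bijection $\Theta\colon \frak{n}^*/\Ad^*(N) \to \widehat{N}$.

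For the CCR property (i), I would realize $\pi_\lambda$ on $L^2(N/M)$ and note that, for $f \in C_c(N)$, the operator $\pi_\lambda(f)$ is an integral operator whose kernel $K_f(\bar x,\bar y) = \int_M f(xmy^{-1})\,\overline{\chi_\lambda(m)}\,dm$ is continuous and compactly supported on $(N/M)^2$, hence Hilbert--Schmidt; thus $\pi_\lambda(f)$ is compact for all $f$ in the dense subalgebra $C_c(N)$, and by norm-density $\pi_\lambda(a)$ is compact for every $a \in C^*(N)$. Irreducibility then forces $\pi_\lambda(C^*(N)) = \K(\H_{\pi_\lambda})$, which is exactly the CCR condition; in particular $N$ is type I. For (ii), type I makes the canonical surjection $\widehat{N} \to \Prim(C^*(N))$, $\pi \mapsto \ker \pi$, injective (a primitive quotient of a CCR algebra is elementary, with a unique irreducible representation) and hence a homeomorphism. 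It remains to see that $\Theta$ is a homeomorphism for the quotient topology on $\frak{n}^*/\Ad^*(N)$ and the Fell topology on $\widehat{N}$; I would establish continuity of $\lambda \mapsto [\pi_\lambda]$ from the explicit dependence of the matrix coefficients on $\lambda$, and continuity of the inverse using that the orbits are closed, so that the orbit space separates points exactly as $\Prim(C^*(N))$ does.

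The main obstacle is the exhaustion statement (c), that every irreducible unitary representation arises by inducing a character from a polarization: over $\mathbb{R}$ this is Kirillov's theorem, and the $p$-adic inductive skeleton is parallel, but one must verify the non-archimedean analytic inputs --- invariant measures on the homogeneous spaces $N/M$, Stone--von Neumann uniqueness for the $p$-adic Heisenberg group with central character $\psi\circ\lambda$, and the continuous selection needed to upgrade the orbit bijection $\Theta$ to a homeomorphism. These are precisely the technical points secured in \cite{Mor65,BS08}, on which a detailed proof must rely.
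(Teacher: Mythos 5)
The paper itself offers no proof of this theorem: it is quoted as a known result from \cite{Mor65,BS08}, and your sketch follows essentially the same route as those references (the $p$-adic Kirillov orbit method: induction of characters from polarizations, exhaustion via the Mackey machine and Stone--von Neumann, identification of $\widehat{N}$ with the coadjoint orbit space). But one step of your sketch is false as written, and it is precisely the step where the real work lives. You claim that for $f \in C_c(N)$ the kernel $K_f(\bar x,\bar y)=\int_M f(xmy^{-1})\,\overline{\chi_\lambda(m)}\,dm$ is ``continuous and compactly supported on $(N/M)^2$''. It is not: writing $q:N\to N/M$ for the quotient map, $K_f$ is supported on the band $\{(\bar x,\bar y): \bar x \in q(\supp(f)\,y)\}$, which is compact in $\bar x$ for each fixed $\bar y$ but is never jointly compact. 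Already for the $p$-adic Heisenberg group with $M$ the standard polarization one computes $K_f(x,y)=F(x-y,\lambda x)$, where $F$ is a partial Fourier transform of $f$ in the $M$-variables; for merely continuous compactly supported $f$ this decays in $x$ but does not vanish outside a compact set. What actually makes $\pi_\lambda(f)$ compact is the oscillation of $\chi_\lambda$, not the support of the kernel: if $f$ is bi-invariant under a compact open subgroup $U$, the substitution $m\mapsto mm_0$ with $m_0 \in M\cap y^{-1}Uy$ shows that $K_f(\bar x,\bar y)=0$ unless $\chi_\lambda$ is trivial on $M\cap y^{-1}Uy$, and one must then prove that this triviality forces $\bar y$ into a compact set --- this is where the structure of $N$ and the nontriviality of $\lambda$ on suitable subgroups genuinely enter. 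The clean repair is to work with the dense subalgebra $C_c^\infty(N)$ of locally constant compactly supported functions, for which the $p$-adic Fourier transform arguments above produce kernels that really are compactly supported and locally constant (so $\pi_\lambda(f)$ is even of finite rank), and then pass to all of $C^*(N)$ by norm density; alternatively, keep $f\in C_c(N)$ and prove the Hilbert--Schmidt bound by Plancherel along the character direction, as in the Heisenberg computation.

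Two smaller corrections. First, a $p$-adic field has characteristic zero, so for every unipotent $N$ the series for $\exp$ and $\log$ terminate with rational coefficients and give mutually inverse polynomial bijections; no comparison between the nilpotency class and the residue characteristic is needed, and your fallback case is vacuous (that hypothesis matters only over local fields of positive characteristic, which the statement excludes). Second, Kostant--Rosenlicht gives Zariski-closedness of the orbits of the algebraic group; to conclude that the $\Ad^*(N)$-orbits of the group of $k$-points are closed in the locally compact topology you also need that the orbit of the $k$-points equals the $k$-points of the algebraic orbit, which holds because Galois cohomology of unipotent groups vanishes in characteristic zero. With these repairs, and granting the exhaustion and bicontinuity statements you explicitly defer to \cite{Mor65,BS08} (the bicontinuity of the orbit correspondence is itself a nontrivial theorem), your outline matches the proofs in the cited literature.
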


In particular, we have the following corollary by the results in this article.

\begin{cor}
Let $N$ be a group of $n$-dimensional unipotent matrices over a $p$-adic field. Let $(\Phi, \Psi)$ be a complementary pair of Young functions with $\Phi \in \Delta_2$ and $\omega$ an $L^\Phi$-weight on $N$ with $1/\omega \in L^\Psi(N)$. Then, there exists a homeomorphism
\begin{displaymath} \eta: \Prim_*(L^\Phi(N,\omega)) \rightarrow \frak{n}^*/\Ad^*(N).  \end{displaymath}
\end{cor}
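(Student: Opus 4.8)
The plan is to chain together the homeomorphisms that have already been established in the paper with the group-theoretic identification of $\Prim(C^*(N))$ coming from the cited theorem. The final corollary is essentially a bookkeeping statement: Theorem~\ref{thm:4}(ii) gives us that $\Prim_*(L^\Phi(N,\omega))$ is homeomorphic to $\Prim(C^*(N))$, and the quoted theorem of Morozov and Bernstein--S. gives us that $\Prim(C^*(N))$ is homeomorphic to $\frak{n}^*/\Ad^*(N)$. Composing these two homeomorphisms yields the desired $\eta$.

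First I would verify that the hypotheses of Theorem~\ref{thm:4} are met for the group $N$. The corollary assumes $N$ is a group of $n$-dimensional unipotent matrices over a $p$-adic field, that $(\Phi,\Psi)$ is a complementary pair of Young functions with $\Phi \in \Delta_2$, and that $\omega$ is an $L^\Phi$-weight on $N$ with $1/\omega \in L^\Psi(N)$. The one structural fact needed is that $N$ is locally elliptic, which is precisely the content of the preceding Proposition (every group of $n$-dimensional unipotent matrices over a non-archimedean field is locally elliptic and has the bounded index property). With $N$ locally elliptic and the remaining hypotheses on $\Phi$, $\Psi$ and $\omega$ matching verbatim, Theorem~\ref{thm:4}(ii) applies and yields a homeomorphism
\begin{displaymath} \Prim_*(L^\Phi(N,\omega)) \cong \Prim(C^*(N)). \end{displaymath}

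Next I would invoke part (ii) of the quoted theorem of Morozov and Bernstein--S., which states that for such $N$ the spaces $\widehat{N}$, $\Prim(C^*(N))$ and $\frak{n}^*/\Ad^*(N)$ are all homeomorphic. In particular this gives a homeomorphism
\begin{displaymath} \Prim(C^*(N)) \cong \frak{n}^*/\Ad^*(N). \end{displaymath}
Defining $\eta$ to be the composite of the two homeomorphisms above then completes the proof, since a composition of homeomorphisms is a homeomorphism.

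There is no serious obstacle here; the proof is a direct concatenation of two previously established results and the verification that $N$ satisfies the standing hypotheses. The only point requiring the tiniest care is confirming that Theorem~\ref{thm:4} is genuinely applicable, namely that $N$ is locally elliptic (supplied by the preceding Proposition) so that the theorem's blanket assumption ``$G$ a locally elliptic group'' is satisfied by $N$; once that is in place the two homeomorphisms slot together with nothing further to check. I would therefore keep the proof to two or three sentences, simply citing Theorem~\ref{thm:4} and the Morozov--Bernstein--S. theorem and forming the composite.
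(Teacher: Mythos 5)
Your proposal is correct and is essentially the paper's own argument: the paper derives this corollary by combining Theorem \ref{thm:4}(ii) (applicable since the preceding Proposition shows $N$ is locally elliptic) with part (ii) of the cited theorem identifying $\Prim(C^*(N))$ with $\frak{n}^*/\Ad^*(N)$, exactly as you do.
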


Maintaining the notation as in the corollary, let's suppose that $L^\Phi(N,\omega)$ is Hermitian (which is the case if $\omega$ is sub-additive, for example) and suppose that $I$ is a closed ideal of $L^\Phi(N,\omega)$ with $\hull_*(I) := C \subseteq \Prim_*(L^\Phi(N,\omega))$. Let $j(C)$ be the ideal of $L^\Phi(N,\omega)$ as defined in Theorem \ref{thm:minideal}. Then, the ideal $I$ must sit between $\ker(C)$ and $j(C)$ i.e.\ $j(C) \subseteq I \subseteq \ker(C)$. In particular, understanding the closed ideals of $L^\Phi(G,\omega)$ with hull equal to $C$ would require one to understand the ideal theory of the algebra $\ker(C)/j(C)$.

The set $C \subseteq \Prim_*(L^\Phi(N,\omega))$ is called a \hlight{set of synthesis} if $\ker(C)/j(C)$ is trivial, or equivalently, $\ker(C)$ is the only closed ideal of $L^\Phi(N,\omega)$ with hull $C$. The property of $L^\Phi(N,\omega)$ being Wiener is equivalent to the empty set being a set of synthesis, so under our assumption that $L^\Phi(N,\omega)$ is Hermitian, it follows that the empty set is a set of synthesis by Theorem \ref{thm:3}. Also, by the previous corollary, we may identify $C$ with a subset of $\frak{n}^*/\Ad^*(N)$. We thus pose the following problem.

\begin{que}
Let $N$ be a group of $n$-dimensional unipotent matrices over a $p$-adic field. Let $(\Phi, \Psi)$ be a complementary pair of Young functions and $\omega$ an $L^\Phi$-weight on $N$ with $1/\omega \in L^\Psi(N)$ and $L^\Phi(N,\omega)$ Hermitian. Given 
\begin{displaymath} C \subseteq \frak{n}^*/\Ad^*(N) \cong \Prim_*(L^\Phi(N,\omega)), \end{displaymath} 
can geometric properties of $C$, when viewed as a subset of $\frak{n}^*/\Ad^*(N)$, be linked to algebraic properties of the algebra $\ker(C)/j(C)$? In particular, what geometric properties of $C$ imply that it is a set of synthesis?
\end{que}
 
We will now note some results that are known in the case of connected nilpotent Lie groups, in which this question is motivated by. We now suppose that $N_0$ is a connected nilpotent (real) Lie group. The entire discussion of this section regarding Orlicz $*$-algebras on unipotent $p$-adic groups goes through when the (unipotent $p$-adic) group is replaced with the connected nilpotent Lie group $N_0$ and $\omega$ a sub-exponential weight on $N_0$ \cite{OS17}. In particular, $L^1(N_0)$ contains a minimal ideal $j(C)$ for each hull $C \subseteq \Prim_*(L^1(N_0))$.

It is known that for every one-point set $\{J\} \subseteq \Prim_*(L^1(N_0))$, the algebra $\ker(\{J\})/j(\{J\}) = J/j(\{J\})$ is nilpotent \cite{Lud83b}. Furthermore, if $N_0$ is 2-step-nilpotent, then all singleton sets are sets of synthesis, but this does not hold if $N_0$ is 3-step-nilpotent \cite{Lud83a}. In particular, in some sense, if $N_0$ is 2-step-nilpotent, then closed ideals of $L^1(N_0)$ whose hulls are singleton sets are classified. More generally, if $N_0$ is not 2-step-nilpotent, then the set $\{J\}$ is a set of synthesis provided that $\{J\}\subseteq \Prim_*(L^1(N_0))$ corresponds to a ``flat orbit'' in $\frak{n}_0^*/\Ad^*(N_0)$. This was more recently generalised to weighted $L^1$-algebras on connected nilpotent Lie groups in \cite{LMBP13}.

It would be good to try and replicate these results for weighted Orlicz $*$-algebras on unipotent $p$-adic groups, or more generally, torsion-free contraction groups. Even treating the case of the full group algebra $L^1(N)$ for a unipotent $p$-adic group $N$ would be novel.

\subsection{Non-Hermitian locally finite groups}

Here we will write down the definitions of two non-Hermitian locally finite groups found in the papers by Fountain-Ramsey-Williamson \cite{FRW76} and Hulanicki \cite{Hul80}. Expanding on these two examples, it would be an interesting question for future research to characterise the Hermitian locally finite groups. 

\subsubsection{The Fountain-Ramsay-Williamson group}

The Fountain-Ramsay-Williamson group is the locally finite discrete group $G$ with generators $\{x_n : n \in \mathbb{N}\}$ subject to the following relations:
\begin{enumerate}[(i)]
   \item $x^2_n = \id_G$ for all $n \in \mathbb{N}$;
   \item $x_lx_mx_nx_m = x_mx_nx_mx_l$ for all $l,m < n \in \mathbb{N}$.
\end{enumerate}

It is a consequence of \cite[Section 5 \& 6]{FRW76} that the group $G$ is locally finite and not Hermitian. 

\subsubsection{The Leptin-Hulanicki group}
Let $H$ be the direct sum of countably infinite many copies of the cyclic group of order 2. Define the group $N:= \bigoplus_{h \in H} H$. Of course, $H$ acts on $N$ by permuting indices in the direct sum, so we can form the semi-direct product $G := N \rtimes H$. The group $G$ we refer to as the Leptin-Hulanicki group. The group $G$ is defined in \cite[Section 3]{Hul80}, but Hulanicki attributes the example to Leptin, hence we term this group as the \textit{Leptin-Hulanicki group}. This group is class 2 solvable and it is shown in \cite{Hul80} that the group $G$ is not Hermitian.

\subsection{Horocyclic groups of automorphism of trees and scale groups}

Let $d \in \mathbb{N}_{>2}$ and let $\Td = (V\Td,E\Td)$ denote the regular tree of degree $d$, that is, the infinite tree with the property that every vertex has degree $d$. We denote the boundary of $\Td$ by $\partial \Td$. For $\gamma \in \partial \Td$, define two groups by $G_\gamma := \{ x \in \Aut(\Td) : x(\gamma) = \gamma \}$ and  $B_\gamma := \{ x \in G_\gamma : \exists v \in V\Td, x(v) = v \}$. 

\begin{prop}
The group $B_\gamma$ is locally elliptic and $G_\gamma \cong B_\gamma \rtimes \mathbb{Z}$.
\end{prop}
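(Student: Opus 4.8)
The plan is to realise the displacement along the end $\gamma$ as a homomorphism to $\mathbb{Z}$ whose kernel is exactly $B_\gamma$, and then to split it. First I would fix a geodesic ray $\xi=(v_0,v_1,v_2,\dots)$ representing $\gamma$, with $v_{i+1}$ adjacent to $v_i$, and introduce the associated Busemann (horocyclic height) function $h\colon V\Td\to\mathbb{Z}$, $h(v)=\lim_{n\to\infty}\big(d(v,v_n)-n\big)$, normalised so that $h(v_i)=-i$. Since any $g\in G_\gamma$ sends $\xi$ to another ray representing $\gamma=g\gamma$, and two rays to the same end share a common tail, one checks that $h(gv)-h(v)$ is independent of $v$; call this integer $t(g)$. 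A short computation shows $t\colon G_\gamma\to\mathbb{Z}$ is a group homomorphism, and since $t$ vanishes on the open vertex stabiliser of $v_0$ it is continuous for the discrete topology on $\mathbb{Z}$, so $\ker(t)$ is open and closed in $G_\gamma$ (which is itself closed in $\Aut(\Td)$, being the stabiliser of a boundary point).

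Next I would identify $\ker(t)=B_\gamma$. By the elliptic/hyperbolic dichotomy for tree automorphisms, an element of $G_\gamma$ either fixes a vertex (elliptic) or is a translation along an axis one of whose ends is $\gamma$ (hyperbolic); in the first case $t(g)=0$ and in the second $t(g)\neq 0$, giving $t(g)=0\iff g\in B_\gamma$. For $g\in B_\gamma$, matching heights along the common tail of $\xi$ and $g\xi$ forces $gv_n=v_n$ for all sufficiently large $n$. Setting $K_n:=\{g\in B_\gamma: gv_n=v_n\}=\Stab_{\Aut(\Td)}(v_n)\cap G_\gamma$, I would verify: each $K_n$ is compact (a closed subset of the compact stabiliser $\Stab_{\Aut(\Td)}(v_n)$) and open in $B_\gamma$; fixing $v_n$ together with preserving $\gamma$ forces fixing $v_m$ for all $m\ge n$, so $K_n\subseteq K_{n+1}$; and every element of $B_\gamma$ lies in some $K_n$. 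Hence $B_\gamma=\bigcup_{n} K_n$ is a countable increasing union of compact open subgroups, which is locally elliptic by Platonov's characterisation.

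For the second assertion it remains to see that $t$ is surjective and splits. I would extend $\xi$ to a bi-infinite geodesic line $L=(v_i)_{i\in\mathbb{Z}}$ (choosing $v_{-1},v_{-2},\dots$ arbitrarily) and construct an automorphism $g_0$ translating $L$ by one step, $g_0v_i=v_{i+1}$. Such a $g_0$ exists because $\Td$ is regular: every $v_i$ carries $d-2$ mutually isomorphic hanging rooted subtrees, so one can transport the decoration at $v_i$ onto that at $v_{i+1}$ by fixed rooted-tree isomorphisms and thereby define $g_0$ on all of $\Td$. Then $g_0\in G_\gamma$ with $t(g_0)=-1$, so $t$ is onto; moreover $g_0$ has infinite order and $\langle g_0\rangle$ is a discrete (hence closed) subgroup meeting $B_\gamma$ trivially, so $s\colon\mathbb{Z}\to G_\gamma$, $n\mapsto g_0^{-n}$, is a continuous section of $t$. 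Since $B_\gamma=\ker(t)$ is normal and $G_\gamma=B_\gamma\cdot\langle g_0\rangle$, this exhibits $G_\gamma$ as the topological semidirect product $B_\gamma\rtimes\mathbb{Z}$.

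The main obstacle is the construction of the translation $g_0$: producing an honest automorphism of $\Td$ that shifts a chosen line by one edge, and checking it lies in $G_\gamma$, is where the homogeneity of the regular tree is genuinely used. The remaining points—continuity of $t$, the elliptic/hyperbolic dichotomy, and the bookkeeping that each element of $B_\gamma$ fixes a tail of $\xi$—are routine once the height function is in place.
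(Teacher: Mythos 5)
Your proof is correct. Note that the paper itself gives no argument for this proposition---its ``proof'' is simply a citation to \cite{FTN91,Wil20}---so your write-up is a self-contained version of exactly the standard argument those references contain: the Busemann height homomorphism $t\colon G_\gamma \to \mathbb{Z}$, the identification $\ker(t) = B_\gamma$, the exhaustion of $B_\gamma$ by the compact open subgroups $K_n = \Stab_{\Aut(T_d)}(v_n)\cap G_\gamma$, and a translation $g_0$ along a bi-infinite line through $\gamma$ that splits $t$. Two small remarks. First, the classification of tree automorphisms is really a trichotomy (elliptic, inversion, hyperbolic), so when you invoke the ``elliptic/hyperbolic dichotomy'' for elements of $G_\gamma$ you should observe that an inversion fixes no end of the tree and hence does not occur in $G_\gamma$; alternatively, your own height-matching argument already yields $\ker(t)\subseteq B_\gamma$ directly---if $t(g)=0$ then the common tail of $\xi$ and $g\xi$ together with preservation of heights forces $gv_n=v_n$ for all large $n$---so the appeal to the classification can be dropped altogether. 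Second, since the isomorphism $G_\gamma \cong B_\gamma \rtimes \mathbb{Z}$ should be topological, it is worth stating explicitly that the bijection $(b,n)\mapsto b\,g_0^{-n}$ is a homeomorphism; this follows because $t$ is continuous ($\mathbb{Z}$ discrete, $B_\gamma=\ker(t)$ open) and the section $n\mapsto g_0^{-n}$ is continuous, which your argument essentially contains.
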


\begin{proof}
This is a standard result. See, for example, \cite{FTN91,Wil20}.
\end{proof}

A closed vertex transitive subgroup $G \le G_\gamma$ is called a \hlight{scale group}. Scale groups are studied in \cite{BW04,Hor15,Wil20} and it is shown they have important connections with the structure theory of totally disconnected locally compact groups. For example, every non-uniscalar totally disconnected locally compact group has a subquotient isomorphic to a scale group. Also, these groups are simultaneously analogues of parabolic subgroups and $ax+b$ groups in the theory of tdlc groups. 

Every scale group is of the form $G = N \rtimes \mathbb{Z}$ with $N$ locally elliptic, so understanding the harmonic analysis of locally elliptic groups is critical to understanding the harmonic analysis of scale groups.


The following is an open question in the harmonic analysis of tdlc groups.

\begin{que}
Let $G = N \rtimes \mathbb{Z}$ be a scale group. Under what assumptions is $G$ Hermitian?
\end{que}

As far as the author is aware, it is even an open question as to whether $\mathbb{Q}_p \rtimes \mathbb{Z}$ is Hermitian (\textit{c.f.} \cite[Section 3.6]{Pal15}).


\begin{center}
\textsc{Acknowledgements}
\end{center}
The author thanks Jared White and Pierre-Emmanuel Caprace for reading a previous version of this manuscript and their feedback/comments.

\bibliographystyle{amsplain}
\bibliography{wei_alg}

\providecommand{\bysame}{\leavevmode\hbox to3em{\hrulefill}\thinspace}
\providecommand{\MR}{\relax\ifhmode\unskip\space\fi MR }
\providecommand{\MRhref}[2]{%
  \href{http://www.ams.org/mathscinet-getitem?mr=#1}{#2}
}
\providecommand{\href}[2]{#2}
\begin{thebibliography}{10}

\bibitem{BW04}
U.~Baumgartner and G.~Willis, \emph{Contraction groups and scales of
  automorphisms of totally disconnected locally compact groups}, Israel Journal
  of Mathematics \textbf{142} (2004), no.~2, 221--248.

\bibitem{BL16}
I.~Belti{\c t}{\u a} and J.~Ludwig, \emph{{Spectral synthesis for coadjoint
  orbits of nilpotent Lie groups}}, Math. Z. \textbf{284} (2016), 1111--1136.

\bibitem{BLSV78}
J.~Boidol, H.~Leptin, J.~Schürmann, and D.~Vahle, \emph{{Räume primitiver
  Ideale von Gruppenalgebren}}, Mathematische Annalen \textbf{236} (1978),
  1--14.

\bibitem{Bon61}
R.~Bonic, \emph{Symmetry in group algebras of discrete groups}, Pacific Journal
  of Mathematics \textbf{11} (1961), 73--94.

\bibitem{BS08}
M.~Boyarchenko and M.~Sabitova, \emph{{The orbit method for profinite groups
  and a $p$-adic analogue of Brown's theorem}}, Israel Journal of Mathematics
  \textbf{165} (2008), 67--91.

\bibitem{CC25}
P-E. Caprace and M.~Carter, \emph{Unitary representations of torsion
  contraction groups},  (2025), In preparation.

\bibitem{CM86}
A.~Carey and W.~Moran, \emph{{Nilpotent groups with $T_1$ primitive ideal
  spaces}}, Studia Mathematica \textbf{83} (1986), 25--32.

\bibitem{Car24}
M.~Carter, \emph{On the unitary representation theory of locally compact
  contraction groups}, Journal of Lie Theory \textbf{34} (2024), 911--956.

\bibitem{CF13}
D.V. Cruz-Uribe and A.~Fiorenza, \emph{{Variable Lebesgue Spaces: Foundations
  and Harmonic Analysis}}, Applied and Numerical Harmonic Analysis, Birk\"auser
  Basel, Basel, 2013.

\bibitem{Dix60}
J.~Dixmier, \emph{Op{\'e}rateurs de rang fini dans les repr{\'e}sentations
  unitaires}, Publications math{\'e}matiques de l’I.H.{\'E}.S. \textbf{6}
  (1960), 13--25.

\bibitem{Dix77}
\bysame, \emph{C*-algebras}, North Holland Publishing Company, 1977.

\bibitem{Dom56}
Y.~Domar, \emph{{Harmonic analysis based on certain commutative Banach
  algebras}}, Acta Mathematica \textbf{96} (1956), 1--66.

\bibitem{DLMB04}
J.~Dziubanski, J.~Ludwig, and C.~Molitor-Braun, \emph{Functional calculus in
  weighted group algebras}, Rev. Mat. Complut. \textbf{17} (2004), 321--357.

\bibitem{Fei79}
H.G. Feichtinger, \emph{{Gewichtsfunktionen auf lokalkompakten Gruppen}},
  Österr. Akad. Wiss. Abt.II \textbf{181} (1979), 471--491.

\bibitem{FGL06}
G.~Fendler, K.~Gröchenig, and M.~Leinert, \emph{{Symmetry of weighted
  $L^1$-algebras and the GRS condition}}, Bull. Lond. Math. Soc. \textbf{38}
  (2006), 625--635.

\bibitem{FGLLMB03}
G.~Fendler, K.~Gröchenig, M.~Leinert, J.~Ludwig, and C.~Molitor-Braun,
  \emph{Weighted group algebras on groups of polynomial growth}, Math. Z.
  \textbf{245} (2003), 791--821.

\bibitem{FTN91}
A.~Fig\`a-Talamanca and C.~Nebbia, \emph{{Harmonic Analysis and Representation
  Theory for Groups Acting on Homogenous Trees}}, London Mathematical Society
  Lecture Note Series, Cambridge University Press, Cambridge, 1991.

\bibitem{FRW76}
J.B. Fountain, R.W. Ramsay, and J.H. Williamson, \emph{{Functions of Measures
  on Compact Groups}}, Proceedings of the Royal Irish Academy. Section A:
  Mathematical and Physical Sciences \textbf{76} (1976), 235--251.

\bibitem{GW10}
H.~Gl\"ockner and G.A. Willis, \emph{Classification of the simple factors
  appearing in composition series of totally disconnected contraction groups},
  J. reine angew. Math. \textbf{634} (2010), 141--169.

\bibitem{GW21}
\bysame, \emph{Decompositions of locally compact contraction groups, series and
  extensions}, Journal of Algebra \textbf{570} (2021), 164--214.

\bibitem{GW21b}
\bysame, \emph{Locally pro-$p$ contraction groups are nilpotent}, J. Reine
  Angew. Math. \textbf{781} (2021), 85--103.

\bibitem{HH19}
P.~Harjulehto and P.~Hästö, \emph{{Orlicz Spaces and Generalized Orlicz
  Spaces}}, {Lecture Notes in Mathematics}, Springer Cham, Cham, 2019.

\bibitem{Hor15}
D.~Horadam, \emph{Tree actions associated with the scale of an automorphism of
  a totally disconnected locally compact group}, Ph.D. thesis, The University
  of Newcastle, 2015.

\bibitem{Hul66}
A.~Hulanicki, \emph{{On the spectral radius of hermitian elements in group
  algebras}}, Pacific Journal of Mathematics \textbf{18} (1966), 277--287.

\bibitem{Hul72}
\bysame, \emph{{On the spectrum of convolution operators on groups with
  polynomial growth}}, Inventiones Math. \textbf{4} (1972), 135--142.

\bibitem{Hul80}
\bysame, \emph{{Invariant subsets of non-synthesis, Leptin algebras and
  non-symmetry}}, Colloquium Mathematicum \textbf{43} (1980), 127--136.

\bibitem{Kuz06}
Y.N. Kuznetsova, \emph{{Weighted $L_p$-algebras on groups}}, Functional
  analysis and its applications \textbf{40} (2006), 234--236.

\bibitem{Kuz08}
\bysame, \emph{{Invariant Weighted Algebras $\mathscr{L}_p^w(G)$}},
  Mathematical Notes \textbf{84} (2008), 529--537.

\bibitem{Kuz09}
\bysame, \emph{{Example of a weighted algebra $\mathcal{L}_p^w(G)$ on an
  uncountable discrete group}}, J. Math. Anal. Appl. \textbf{353} (2009),
  660--665.

\bibitem{KMB12}
Y.N. Kuznetsova and C.~Molitor-Braun, \emph{{Harmonic analysis of weighted
  $L^p$-algebras}}, Expositiones Mathematicae \textbf{30} (2012), no.~2,
  124--153.

\bibitem{Lep76}
H.~Leptin, \emph{{Ideal theory in group algebras of locally compact groups}},
  Inventiones mathematicae \textbf{31} (1976), 259--278.

\bibitem{Lep76b}
\bysame, \emph{{Symmetrie in Banachschen Algebren}}, Arch. Math. \textbf{27}
  (1976), 394--400.

\bibitem{Lep84}
\bysame, \emph{{The structure of $L^1(G)$ for locally compact groups}}, In:
  Operator algebras and group representations (Volume II): proceedings of the
  international conference held in Neptun (Romania) September 1-13, 1980
  (1984), 48--61.

\bibitem{LM77}
J.~Liukkonen and R.~Mosak, \emph{Harmonic analysis and centres of beurling
  algebras}, Commentarii Mathematici Helvetici \textbf{52} (1977), 297--315.

\bibitem{Los01}
V.~Losert, \emph{{On the structure of groups with polynomial growth II}},
  Journal of the London Mathematical Society \textbf{63} (2001), 640--654.

\bibitem{Los21}
\bysame, \emph{{On the structure of groups with polynomial growth IV}}, Journal
  of Algebra \textbf{581} (2021), 92--121.

\bibitem{Lud79}
J.~Ludwig, \emph{{A class of symmetric and a class of Wiener algebras}},
  Journal of Functional Analysis \textbf{31} (1979), 187--194.

\bibitem{Lud79b}
\bysame, \emph{{Polynomial growth and ideals in group algebras}}, Manuscripta
  Math. \textbf{30} (1979), 215--221.

\bibitem{Lud83b}
\bysame, \emph{{On primary ideals in the group algebra of a nilpotent Lie
  group}}, Math. Ann. \textbf{262} (1983), 287--304.

\bibitem{Lud83a}
\bysame, \emph{{On the spectral synthesis problem for points in the dual of a
  nilpotent Lie group}}, Ark. Math. \textbf{21} (1983), 127--144.

\bibitem{LMB10}
J.~Ludwig and C.~Molitor-Braun, \emph{{Flat orbits, minimal ideals and spectral
  synthesis}}, Mon. Math. \textbf{160} (2010), 271--312.

\bibitem{LMBP13}
J.~Ludwig, C.~Molitor-Braun, and D.~Poguntke, \emph{{Spectral synthesis for
  flat orbits in the dual space of weighted group algebras of nilpotent Lie
  groups}}, Trans. Am. Math. Soc. \textbf{365} (2013), 4433--4473.

\bibitem{ML14}
A.W. Majewski and L.E. Labuschagne, \emph{{On Applications of Orlicz Spaces to
  Statistical Physics}}, Annales Henri Poincaré \textbf{15} (2014),
  1197--1221.

\bibitem{Mor65}
C.C. Moore, \emph{{Decomposition of Unitary Representations Defined by Discrete
  Subgroups of Nilpotent Groups}}, Ann. of Math. \textbf{82} (1965), 146--182.

\bibitem{Mur90}
G.J. Murphy, \emph{{C$^*$-algebras and Operator Theory}}, Academic Press,
  London, 1990.

\bibitem{Orl32}
M.W. Orlicz, \emph{{Über eine gewisse Klasse von Raümen vom Typus B}}, Bull.
  Intern. Acad. Pol. \textbf{8} (1932), 207--220.

\bibitem{OO15}
A.~Osançliol and S.~Öztop, \emph{{Weighted Orlicz algebras on locally compact
  groups}}, J. Aust. Math. Soc. \textbf{99} (2015), 399--414.

\bibitem{Pal15}
R.~Palma, \emph{{On the growth of Hermitian groups}}, Groups, Geometry and
  Dynamics \textbf{9} (2015), 29--53.

\bibitem{Pal94}
T.W. Palmer, \emph{{Banach Algebras and the General Theory of $*$-Algebras}},
  {Encyclopaedia of Mathematics and its Applications}, vol.~1, Cambridge
  University Press, Cambridge, 1994.

\bibitem{Pal01}
\bysame, \emph{{Banach Algebras and the General Theory of $*$-Algebras}},
  {Encyclopaedia of Mathematics and its Applications}, vol.~2, Cambridge
  University Press, Cambridge, 2001.

\bibitem{Pla66}
V.P. Platonov, \emph{Locally projective nilpotent subgroups and nilelements in
  topological groups}, Izv. Akad. Nauk SSSR Ser. Mat. \textbf{30} (1966),
  1257--1274.

\bibitem{Pog83}
D.~Poguntke, \emph{{Simple quotients of group $C^*$-algebras for two-step
  nilpotent groups and connected Lie groups}}, Annales Sci. de l'É.N.S. $4^e$
  série \textbf{16} (1983), 151--172.

\bibitem{Pyt73}
T.~Pytlik, \emph{On the spectral radius of elements in group algebras}, Bull.
  Acad. Polon. Sci. \textbf{21} (1973), 899--902.

\bibitem{Pyt82}
\bysame, \emph{Symbolic calculus on weighted group algebras}, Studia
  Mathematica \textbf{73} (1982), 169--176.

\bibitem{RR91}
M.M. Rao and Z.D. Ren, \emph{{Theory of Orlicz Spaces}}, Marcel Dekker, 1991.

\bibitem{SW20}
E.~Samei and M.~Wiersma, \emph{{Quasi-Hermitian locally compact groups are
  amenable}}, Advances in Mathematics \textbf{359} (2020), 106897.

\bibitem{Wil20}
G.A. Willis, \emph{Scale groups},  (2020), arXiv:2008.05220 (2020).

\bibitem{OS17}
S.~Öztop and E.~Samei, \emph{{Twisted Orlicz algebras, I}}, Studia Mathematica
  \textbf{236} (2017), 271--296.

\end{thebibliography}


\end{document}